\newtheorem{theorem}{Theorem}[section]
\newtheorem{lemma}{Lemma}[section]
\newtheorem{proposition}{Proposition}[section]
\newtheorem{rmk}{Remark}[section]
\newtheorem{definition}{Definition}[section]
\newcommand{\R}{\mathbb{R}}
\newcommand{\N}{\mathbb{N}}
\newcommand{\Z}{\mathbb{Z}}
\newcommand{\sign}{\mathrm{sgn}}
\newcommand{\tv}{\mathrm{TV}\,}
\renewcommand{\AA}{\mathcal{A}}
\newcommand{\CC}{\mathcal{C}}
\newcommand{\T}{\mathcal{T}}
\newcommand{\Id}{I_{2N}}
\newcommand{\DX}{{\Delta x}}
\newcommand{\DT}{{\Delta t}}
\def \ssigma {{\boldsymbol {\sigma} }}
\def \ww {{\boldsymbol {w} }}
\def \ee {{\rm e}}
\def \dd {{\boldsymbol {d} }}
\def \ddone {{\boldsymbol {d}_{\bf 1}}}
\def \gg {{\boldsymbol {\gamma} }}
\def \GG {{\boldsymbol {G} }}
\def \vv {{\boldsymbol {v} }}
\def\zero {{\boldsymbol{0}}}
\DeclareMathOperator*{\essinf}{ess\,inf}
\DeclareMathOperator*{\esssup}{ess\,sup}
\title[Decay for the damped wave equation] 
      {On the decay in $W^{1,\infty}$ for the 1D semilinear damped wave equation on a bounded domain}
\author[Amadori and Aqel]{}
\subjclass{Primary: 35L50, 35B40; Secondary: 35L20.}
 \keywords{Damped wave equation in 1d, space-time dependent relaxation model, time-asymptotic stability, $L^\infty$ error estimates.}
 \email{debora.amadori@univaq.it}
 \email{fatimaaqel@najah.edu}
\thanks{Partially supported by Miur-PRIN 2015, "Hyperbolic Systems of Conservation Laws and Fluid Dynamics: Analysis and Applications", 
\# Grant No. 2015YCJY3A\_003, and by 2018 INdAM-GNAMPA Project "Equazioni iperboliche e applicazioni".}
\begin{document}


\maketitle

\centerline{\scshape Debora Amadori}
\medskip
{\footnotesize
 \centerline{Dipartimento di Ingegneria e Scienze dell'Informazione e Matematica (DISIM)}
   \centerline{University of L'Aquila, L'Aquila, Italy}
} 

\bigskip

\centerline{\scshape Fatima Al-Zahr\`a Aqel}
\medskip
{\footnotesize
 \centerline{Department of Mathematics, An-Najah National University}
   \centerline{Nablus, Palestine}
}

\bigskip\medskip


\begin{abstract} In this paper we study a $2\times2$ semilinear hyperbolic system of partial differential equations, which is related to a semilinear wave equation with nonlinear, time-dependent damping in one space dimension. 
For this problem, we prove a well-posedness result in $L^\infty$ in the space-time domain $(0,1)\times [0,+\infty)$. 
Then we address the problem of the time-asymptotic stability of the zero solution and show that, under appropriate conditions, 
the solution decays to zero at an exponential rate in the space $L^{\infty}$. The proofs are based on the analysis of the invariant domain
of the unknowns, for which we show a contractive property. These results can yield a decay property in $W^{1,\infty}$ for the corresponding solution to the semilinear wave equation.
\end{abstract}


\section{Introduction}
In this paper we study the initial--boundary value problem for the $2\times 2$ system in one space dimension
\begin{equation}
\begin{cases}
\partial_t\rho +  \partial_x J  = 0, &\\
\partial_t J  +  \partial_x \rho = - 2 k(x)\alpha(t) g(J), &
\end{cases} \label{DWE-rho-J-IBVP}
\end{equation}
where $x\in I\, =\, [0,1]$, $t\ge 0$ and 
\begin{equation}\label{init-boundary-data}
(\rho,J)(\cdot,0) =(\rho_0, J_0)(\cdot)\,,  \qquad \qquad J(0,t)= J(1,t)=0
\end{equation}
for $(\rho_0, J_0)\in L^\infty(I)$. About the terms $k$, $\alpha$ and $g$ in \eqref{DWE-rho-J-IBVP}, let 
\begin{equation*}
k\in L^1(I)\,,\quad k\ge 0\ a.e.\,, \qquad g\in C^1(\R)\,, \quad  g(0)=0\,, \quad g'(J)\ge 0
\end{equation*}
and
\begin{equation*}
\alpha\in BV_{loc}\cap L^\infty ([0,\infty); [0,1])\,,\qquad \alpha(t)\ge 0\,.
\end{equation*}
The problem \eqref{DWE-rho-J-IBVP}--\eqref{init-boundary-data} is related to the one-dimensional damped semilinear wave equation 
on a bounded interval: if $(\rho,J)(x,t)$ is a solution to~\eqref{DWE-rho-J-IBVP}, \eqref{init-boundary-data}, then
$$u(x,t) \dot = - \int_0^x \rho(y,t)\,dy$$
formally satisfies 
\begin{equation}\label{DWE}
u_x=-\rho\,,\qquad  u_t=J\,,\qquad \partial_{tt} u - \partial_{xx} u + 2 k(x) \alpha(t)g(\partial_t u)=0\,.
\end{equation}
In the time-independent case, $\alpha(t)=const.$, the large time behavior of solutions to \eqref{DWE-rho-J-IBVP}--\eqref{init-boundary-data} 
is governed by the stationary solution 
$$
J(x)=0, \qquad   \rho(x)= const. = \int_I \rho_0 \,.
$$ 
After possibly changing the variable $\rho$ with $\rho- \int_I\rho_0$, it is not restrictive to assume that $\int_I \rho_0(x)\,dx=0$\,.

\smallskip\par\noindent
The coefficient $\alpha(t)$ in \eqref{DWE-rho-J-IBVP}, with values in $[0,1]$, plays the role of a time localization of the damping term.
A specific time dependent case is the \textit{intermittent damping} \cite{MV2002,HMV2005}, in which for some $0<T_1<T_2$ one has 
\begin{equation}\label{hyp-on-alpha_ON-OFF}
\alpha(t) =\begin{cases}
1 & t\in[0,T_1), \\
0 \, & t\in [T_1,T_2) 
\end{cases}
\,,\qquad
\alpha(t+T_2)=\alpha(t)\quad \forall\,t>0\,. 
\end{equation}
The damped wave equation and its time-asymptotic stability properties have been studied in several papers, 
see for instance \cite{Z-Sirev-2005} and references therein, in terms of the decay of energy ($L^2$ norm of the derivatives of $u$). 
The $L^p$ framework, with $p\in [2,\infty]$ was considered in \cite{Haraux09,A-A-DS2018,CMP19}.

In this paper we continue the project, that was started in \cite{A-A-DS2018}, in two directions:

- first, we prove a well-posedness result, global in time, for the initial-boundary value problem \eqref{DWE-rho-J-IBVP}--\eqref{init-boundary-data} together with $L^\infty$ initial data; in turn, this result provides a well-posedness result in $W^{1,\infty}$ for the equation \eqref{DWE}. See Theorem~\ref{theorem:well-posedness};

- second, we address the time-asymptotic stability of the solution $\rho=0=J$; by following the approach introduced in \cite{A-A-DS2018}, we obtain a result on the exponential decay of the $L^\infty$--norm of the solution to \eqref{DWE-rho-J-IBVP}, under the assumption that the damping term is linear and time-independent; see Theorem~\ref{main-theorem-2}.  
In this specific context, this result extends the main result obtained in \cite{A-A-DS2018}, where $BV$ (Bounded Variation) initial data were assumed; since the constant values in the time-asymptotic estimate were depending on the total variation of the solution, a density argument was not sufficient to extend the result to the class of $L^\infty$ initial data.

\subsection{Main results}
We introduce the main results of this paper. The first one (Theorem~\ref{theorem:well-posedness}) concerns the existence and 
stability of weak solutions to \eqref{DWE-rho-J-IBVP} with time-dependent source, while the second one (Theorem~\ref{main-theorem-2}) 
concerns the asymptotic-time decay in $L^\infty$ of the solution under more specific assumptions.

\smallskip
We use the standard notation $\R_+ = [0,+\infty)$.

\begin{definition}\label{def:weak-sol}
Let $I\, =\, [0,1]$ and $(\rho_0, J_0)\in L^\infty(I)$. A weak solution of the problem \eqref{DWE-rho-J-IBVP}--\eqref{init-boundary-data} is a function
\begin{equation*}
(\rho,J):I\times \R_+ \to \R^2
\end{equation*}
that satisfies the following properties:
\begin{itemize}
\item[(a)] the map $\R_+ \ni t\mapsto (\rho,J)(\cdot,t)\in L^\infty(I)\subset L^1(I)$ is continuous with respect to the $L^1$--norm, 
and it satisfies 
$$(\rho,J)(\cdot,0) = (\rho_0, J_0);$$
\item[(b)] the equation \eqref{DWE-rho-J-IBVP}$_1$ is satisfied in the distributional sense in $[0,1]\times (0,\infty)$, while 
the equation \eqref{DWE-rho-J-IBVP}$_2$ in the distributional sense in $(0,1)\times (0,\infty)$\,.
\end{itemize}
\end{definition}
\par\noindent
The boundary condition in \eqref{init-boundary-data} is taken into account by means of the first part of (b), that is,
by requiring that for all functions $\phi\in C^1([0,1]\times (0,+\infty))$, with compact support in $[0,1]\times (0,+\infty)$, one has
\begin{align*}
\int_0^1\int_{0}^{\infty} \left\{ \rho \partial_t \phi + J \partial_x \phi\right\}\,dxdt=0\,.
\end{align*}
Now we state the following well-posedness result.
\begin{theorem}\label{theorem:well-posedness}
Assume that
\begin{equation}\label{hyp-weaker-on-k-and-g}
k\in L^1(I)\,,\quad k\ge 0\ a.e.\,, \qquad g\in C^1(\R)\,, \quad  g(0)=0\,, \quad g'(J)\ge 0
\end{equation}
and that
\begin{equation}\label{hyp-on-alpha}
\alpha\in BV_{loc}\cap L^\infty ([0,\infty); [0,1])\,,\qquad \alpha(t)\ge 0\,.
\end{equation}
Let $(\rho_0, J_0)\in L^\infty(I)$ with $\int_I \rho_0=0$. Then there exists a unique function
\begin{equation*}
(\rho,J):I\times \R_+ \to \R^2
\end{equation*}
which is a weak solution of \eqref{DWE-rho-J-IBVP}--\eqref{init-boundary-data} in the sense of Definition~\ref{def:weak-sol}. One has that
\begin{itemize}
    \item {\rm Conservation of mass}:
\begin{equation}\label{eq:int-rho-const}
\int_I \rho(x,t)\,dx=0\qquad \forall\, t>0\,.
\end{equation}
\item {\rm Invariant domain}: define the diagonal variables
\begin{equation}\label{diag-var-main-thm}
f^+ = \frac{\rho+J}2\,,\qquad f^- = \frac{\rho-J}2 
\end{equation}
and
\begin{equation}\label{def:inv-dom-main-thm}
M= \esssup_I{f_0^\pm}\,,\qquad m = \essinf_I{f_0^\pm}\,,
\end{equation}
\begin{equation}\label{D-D_J-def}
D=[m,M]\times[m,M]\,,  \qquad  
\qquad D_J= [- (M-m), M-m]\,.
\end{equation}
Then 
$D$, $D_J$ are invariant domains for $\left(\rho,J\right)$ and for $J$, respectively, in the sense that
\begin{equation*}
    m\le f^\pm (x,t)\le M\,,\qquad |J(x,t)|\le M-m\qquad a.e.\,.
\end{equation*}
\end{itemize}
\end{theorem}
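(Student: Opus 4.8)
The plan is to pass to the Riemann variables $f^\pm$, in which the system decouples into two transport equations coupled only through a source that is contractive in $J$, and then to make the invariant domain manifest by a splitting scheme. Setting $\rho=f^++f^-$ and $J=f^+-f^-$ as in \eqref{diag-var-main-thm}, a direct computation recasts \eqref{DWE-rho-J-IBVP} as
\begin{equation*}
\partial_t f^+ + \partial_x f^+ = -\,k(x)\alpha(t)\,g(f^+-f^-)\,,\qquad
\partial_t f^- - \partial_x f^- = +\,k(x)\alpha(t)\,g(f^+-f^-)\,,
\end{equation*}
so that $f^+$ is transported to the right and $f^-$ to the left, both at unit speed, while the boundary data \eqref{init-boundary-data} become the reflection conditions $f^+(0,t)=f^-(0,t)$ and $f^-(1,t)=f^+(1,t)$, prescribing the incoming characteristic variable at each endpoint as a copy of the outgoing one. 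A feature I would exploit throughout is that, because the speeds are $\pm1$, the source accumulated along any characteristic is an integral of $k$ over the portion of $[0,1]$ it sweeps, hence controlled by $\|k\|_{L^1}$ even though $k$ is only integrable.

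First I would construct approximate solutions on a uniform grid with $\DX=\DT=1/N$, splitting each step into a transport substep and a source substep. Since the CFL ratio equals one, the transport substep is a pure relabeling of cell values, and together with the reflection conditions (which only copy a value of the opposite family) it leaves the range $[m,M]$ invariant. The source substep solves, cell by cell, the ODE $\dot f^+ = -k\alpha\,g(J)$, $\dot f^- = +k\alpha\,g(J)$, whence $\dot\rho=0$ and $\dot J = -2k\alpha\,g(J)$. Because $g$ is nondecreasing with $g(0)=0$ one has $\sign g(J)=\sign J$, so $|J|$ is nonincreasing and $f^+,f^-$ relax monotonically toward their common mean $\tfrac12(f^++f^-)\in[m,M]$; hence both remain in $[m,M]$. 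By induction the discrete solution satisfies $m\le f^\pm\le M$ and $|J|\le M-m$ at every step, and a telescoping of the transport fluxes, which cancel at the reflecting boundary, shows that $\sum_j\rho_j\DX$ is conserved. This delivers the invariant domains \eqref{def:inv-dom-main-thm}--\eqref{D-D_J-def}, the conservation \eqref{eq:int-rho-const}, and uniform $L^\infty$ bounds, all at the discrete level.

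The uniform bounds confine the solution to a fixed compact set on which $g$ is Lipschitz, and I would use this to pass to the limit. I expect the compactness for the nonlinear source to be the main obstacle: the $L^\infty$ bound alone yields only weak-$\ast$ convergence, which cannot identify $g(J)$ in the limit, and since $k\in L^1$ is unbounded the source is not uniformly bounded pointwise. The way around is to use that the flux is \emph{linear}: the Duhamel representation along characteristics writes each approximate $f^\pm$ as its transported initial datum plus a time integral of the source, and by the first paragraph this integral has a modulus of continuity controlled by $\|k\|_{L^1}\|\alpha\|_{L^\infty}\mathrm{Lip}(g)$ together with the $L^1$-modulus of $k$. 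This gives equicontinuity in an integrated sense which, combined with the $L^\infty$ bound, yields strong $L^1_{loc}$ compactness of the family; the limit is then a weak solution in the sense of Definition~\ref{def:weak-sol}, inheriting the invariant domain, the conservation of mass, and the $L^1$-in-time continuity.

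Finally, uniqueness I would obtain by an $L^\infty$ stability estimate along characteristics. For two solutions with the same data the difference $\delta f^\pm$ solves the diagonal transport system with source $\mp k\alpha\,[g(J_1)-g(J_2)]$, and $|g(J_1)-g(J_2)|\le\mathrm{Lip}(g)\,(|\delta f^+|+|\delta f^-|)$ on the common invariant domain. Integrating along characteristics and using that the accumulated weight $\int k$ along a characteristic over a time-step $t_1$ tends to $0$ uniformly as $t_1\to0$, by absolute continuity of $\int|k|$, one finds that the Duhamel map for the difference is a strict contraction on short intervals, forcing $\delta f^\pm\equiv0$ there; since the admissible step $t_1$ can be chosen uniformly because $\|k\|_{L^1}<\infty$, finitely many iterations cover any $[0,T]$ and the two solutions coincide. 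The boundary terms arising in this estimate cancel identically thanks to the reflection conditions, which is what makes the endpoints harmless.
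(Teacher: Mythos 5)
Your route differs from the paper's in three places: you build the approximations by operator splitting (exact transport at CFL $=1$ plus a cell-wise source ODE) instead of the paper's well-balanced wave-front tracking with the source concentrated into stationary $0$-waves; you get the invariant domain from the monotone relaxation $\dot\rho=0$, $\dot J=-2k\alpha\,g(J)$ in the source substep instead of from the Riemann-problem invariance of $[m,M]^2$ (Proposition~\ref{prop:1}-(ii)); and you prove uniqueness by an $L^\infty$ contraction along characteristics using the uniform absolute continuity of $\int k$, instead of the paper's Kru\v{z}kov-type $L^1$ contraction \eqref{ineq:L1-stability} based on quasimonotonicity. All of these substitutions are legitimate and the invariant-domain and conservation-of-mass arguments are correct.

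The genuine gap is the compactness step for $L^\infty$ data. The paper deliberately avoids it: it proves the theorem for $(\rho_0,J_0)\in BV(I)$, where Helly's theorem applies thanks to the uniform total-variation bounds of Subsection~\ref{subsec:3.3}, and then reaches $L^\infty$ data by density through the $L^1$-stability estimate \eqref{ineq:L1-stability}. You instead claim strong $L^1_{loc}$ compactness of the approximations directly for $L^\infty$ data, asserting that the Duhamel source integral has a modulus of continuity ``controlled by $\|k\|_{L^1}\|\alpha\|_{L^\infty}\mathrm{Lip}(g)$ together with the $L^1$-modulus of $k$.'' That is not the whole story: the source is $k(x)\alpha(t)\,g(J)$, and $g(J)$ is exactly as rough as $J$ itself, so the translation estimate for the accumulated source contains the term $\int_0^t\!\int k(y)\,|J(y+h,s)-J(y,s)|\,dy\,ds$, which is \emph{self-referential} in the unknown $L^1$-modulus of $J$ and, because $k$ is only integrable, cannot be closed by a naive Gronwall argument; one needs an extra device (e.g.\ splitting $k$ into a bounded part plus an $L^1$-small part) before the modulus propagates uniformly in $\DX$. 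As written, the claimed equicontinuity is asserted rather than proved. A secondary issue of the same origin: your scheme initializes with ``cell values,'' which presupposes pointwise traces of the initial data; for genuinely $L^\infty$ data you must use cell averages or, as the paper does, restrict first to $BV$ data. The cleanest repair is the paper's own: run your splitting scheme for $BV$ data, observe that it inherits the discrete $L^1$-contraction from quasimonotonicity, and conclude for $L^\infty$ data by density.
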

\smallskip\par\noindent
Next, we consider the case of linear damping, 
that is for $k(x)$ and $\alpha(t)$ constant, $g(J)$ linear.
In the next theorem we establish a contractive property of the invariant domain when passing from $t=0$ to $t=1$.

\begin{theorem} \label{main-theorem-2} For $d>0$, consider the system 
\begin{equation}
\begin{cases}
\partial_t\rho +  \partial_x J  = 0, &\\
\partial_t J  +  \partial_x \rho = - 2 d J, &
\end{cases} \label{DWE-rho-J-IBVP-linear}
\end{equation}
where $x\in I$, $t\ge 0$, together with initial and boundary conditions \eqref{init-boundary-data},
$(\rho_0, J_0)\in L^\infty(I)$, and $\int_I \rho_0=0$.

Then there exists $d^*>0$ and a constant $\CC(d)$ depending on $d$ that satisfies
\begin{equation}
    0<\CC(d)<1\,,\qquad d\in(0,d^*)\,,
\end{equation}
such that the following holds:
\begin{equation}\label{eq:contraction-M1m1}
\esssup_I{f^\pm(x,t)} - \essinf_I{f^\pm(x,t)} \le  \CC(d)
\left(\esssup_I{f_0^\pm} - \essinf_I{f_0^\pm}\right)
\qquad \forall~ t\ge 1\,.
\end{equation}



 \end{theorem}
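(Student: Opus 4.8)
The plan is to pass to the diagonal (Riemann) variables $f^\pm$ and to exploit that, for the linear time-independent system \eqref{DWE-rho-J-IBVP-linear}, they are transported along characteristics with a relaxation coupling. Writing $J=f^+-f^-$, one has
\begin{equation*}
(\partial_t+\partial_x)f^+ = -d\,(f^+-f^-)\,,\qquad (\partial_t-\partial_x)f^- = d\,(f^+-f^-)\,,
\end{equation*}
so $f^+$ is transported to the right and $f^-$ to the left, each relaxing toward the other at rate $d$; the boundary conditions $J(0,t)=J(1,t)=0$ read $f^+=f^-$ at $x\in\{0,1\}$ and therefore act as perfect reflections that exchange the two families. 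I would unfold these reflections into a single field $W$ on the circle $\R/2\Z$, setting $W=f^+$ on $[0,1]$ and $W(y,t)=f^-(2-y,t)$ on $[1,2]$; then $W$ is transported at unit speed and obeys $(\partial_t+\partial_y)W(y,t)=-d\,(W(y,t)-W(-y,t))$, while the oscillation $\esssup_I f^\pm - \essinf_I f^\pm$ equals the oscillation of $W$ over the whole circle. Under this change of variable the conservation of mass \eqref{eq:int-rho-const} becomes $\int_0^2 W(\cdot,t)\,dy=0$ for all $t$, so $W$ has zero mean; this is the structural fact on which the contraction must rest, since a nonzero constant state (which has nonzero mean) is stationary and would not contract.

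Next I would integrate along characteristics. Following the characteristic launched from $y_0$, the value $V(y_0,t)=W(y_0+t,t)$ satisfies $\dot V(y_0,t)=-d\,(V(y_0,t)-V(-y_0-2t,t))$, so by Duhamel
\begin{equation*}
V(y_0,1) = e^{-d}V_0(y_0) + d\int_0^1 e^{-d(1-s)}\,V(-y_0-2s,s)\,ds\,.
\end{equation*}
The geometric point that explains the threshold $t=1$ is that, as $s$ runs over $[0,1]$, the reflected partner label $-y_0-2s$ sweeps the entire circle exactly once; thus by time $1$ every characteristic has interacted with the whole profile. Since the weights $e^{-d}$ and $d\int_0^1 e^{-d(1-s)}\,ds=1-e^{-d}$ are nonnegative and sum to $1$, this identity exhibits $V(y_0,1)$ as a convex average of $V_0(y_0)$ and of the values $V(\cdot,s)$ sampled over the full circle. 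Combining it with the invariant domain $V(\cdot,s)\in[m,M]$ from Theorem~\ref{theorem:well-posedness} and with the zero-mean condition, the effective target of the relaxation is the mean value $0$, and I would show that $\esssup_I f^\pm(\cdot,1)$ is pushed strictly below $M$ and $\essinf_I f^\pm(\cdot,1)$ strictly above $m$, each by a definite fraction of $M-m$ depending only on $d$. Finally, because the value transported along the maximizing characteristic is non-increasing, and symmetrically for the minimum (reflections being value-preserving), $\esssup_I f^\pm(\cdot,t)$ is non-increasing and $\essinf_I f^\pm(\cdot,t)$ non-decreasing in $t$, so the contraction obtained at $t=1$ persists for all $t\ge1$, yielding \eqref{eq:contraction-M1m1}.

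The hard part will be turning this averaging heuristic into a quantitative bound with an explicit $\CC(d)<1$, because the Duhamel integral is self-referential: the sampled values $V(\cdot,s)$ are taken at intermediate times and are themselves already partially relaxed, rather than being the initial profile. I expect to close the argument either by iterating the Duhamel identity into a Neumann series whose terms are positive-weighted samples of $V_0$ at reflected–shifted points covering the circle (so that the zero mean can be used term by term), or by a self-consistent estimate that bounds the contribution of the encountered values against $M$ and $m$ and solves the resulting inequality for the oscillation. Either way, identifying the threshold $d^*$ and proving $0<\CC(d)<1$ precisely on $(0,d^*)$ — the contraction degenerating both as $d\to0$, where there is no relaxation, and in the over-damped regime for large $d$ — is the delicate quantitative core of the proof.
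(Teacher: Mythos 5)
Your setup is correct and genuinely different from the paper's route: the unfolding of $f^\pm$ onto the circle $\R/2\Z$, the transport equation $(\partial_t+\partial_y)W(y,t)=-d\,(W(y,t)-W(-y,t))$, the Duhamel identity along characteristics, the observation that the reflected partner label sweeps the whole circle exactly once by $t=1$, and the zero-mean identity $\int_0^2 W(\cdot,t)\,dy=0$ are all right. The paper instead works entirely on the wave-front-tracking approximation: it encodes the solution in the vector $\ssigma(t)$ of wave sizes, proves the expansion $B(\gg)^{N}=(1+d/N)^{-N}\bigl(B(\zero)^{N}+d\widehat P+R_{N}(d)\bigr)$ of Theorem~\ref{theo:exp-formula}, extracts the contraction from the representation formula \eqref{eq:represent-fpm-linear} together with Proposition~\ref{prop:invariance-wwd} and the cancellation $\widehat P\ssigma(0+)=\tfrac1{2N}(\ssigma(0+)\cdot v_-)v_-$, obtains the explicit constant $\CC(d)=\ee^{-d}\bigl(1+(1+d)^2(\ee^{d}-d-1)\bigr)$ in \eqref{def:CC}, and only then passes to $L^\infty$ data by the $L^1$-stability \eqref{ineq:L1-stability} and density.

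However, your argument has a genuine gap exactly where you flag it, and that gap is the entire content of the theorem. The Duhamel identity writes $V(y_0,1)$ as a convex combination, with weights $\ee^{-d}$ and $1-\ee^{-d}$, of $V_0(y_0)$ and of values $W(-y_0-s,s)$ sampled at \emph{varying} times $s\in[0,1]$. The zero-mean condition constrains $W(\cdot,s)$ over the circle at each \emph{fixed} time $s$; it does not directly force the time-diagonal samples $s\mapsto W(-y_0-s,s)$ to average strictly below $M$, since a priori these could all sit near $M$ while the compensating negative mass lives elsewhere on the circle at each instant. Closing this requires quantifying how the profile is redistributed across times, i.e.\ the self-referential iteration (a Neumann series in $d$ with control of every term, or a closed self-consistent inequality) that you explicitly defer. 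Without it the identity yields only $V(y_0,1)\le M$, which is the invariant domain of Theorem~\ref{theorem:well-posedness} and gives no $\CC(d)<1$; producing the explicit contraction constant is precisely the role played in the paper by the remainder estimate \eqref{stima-su-zeta-eta_jN}. A secondary point: for merely $L^\infty$ data the characteristic/Duhamel computation needs a rigorous broad-solution or approximation framework; the paper proves the estimate on $BV$ approximations and then passes to the limit, a step your outline also leaves implicit.
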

 
In other words, the estimate \eqref{eq:contraction-M1m1} indicates that the solution trajectory 
$t\mapsto f^\pm(\cdot,t)$, whose values belong to the invariant domain $D=[m,M]^2$ for $t\ge 0$ as in Theorem~\ref{theorem:well-posedness}, 
is contained in a smaller domain after time $T=1$. The new invariant domain is defined by $D_1=[m_1,M_1]^2$, where
\begin{equation}\label{def:M1-m1}
M_1= \esssup_I{f^\pm(x,1)}\,,\qquad m_1 = \essinf_I{f^\pm(x,1)},
\end{equation}
and the following properties hold:
\begin{equation*}
m\le m_1\le 0\le M_1\le M\,,\qquad M_1-m_1 \le \CC(d) (M-m) < M-m \qquad 0<d<d^*\,.
\end{equation*}
For the definition of $\CC(d)$ see \eqref{def:CC}.

\par\smallskip
As an application of Theorem~\ref{main-theorem-2}, we show two decay estimates for the linear system
\begin{equation}
\begin{cases}
\partial_t\rho +  \partial_x J  = 0, &\\
\partial_t J  +  \partial_x \rho = - 2 d \alpha(t) J\,. &
\end{cases} \label{DWE-linear-alpha}
\end{equation}


\begin{theorem}\label{main-theorem-3-applications}
For $d\in(0,d^*)$, consider the system \eqref{DWE-linear-alpha} where $x\in I$, $t\ge 0$, 
together with initial and boundary conditions \eqref{init-boundary-data},
$(\rho_0, J_0)\in L^\infty(I)$, and $\int_I \rho_0=0$.
\begin{itemize}
    \item[\textbf{(a)}]  If $\alpha(t)\equiv 1$, there exist constant values $C_j>0$,  $j=1,2,3$, that depend only on the equation and on
the initial data, such that 
    \begin{align}\label{decay-J-rho}
\|J(\cdot,t)\|_{L^\infty}&\leq C_1 \ee^{-C_3t}\nonumber  \\
\|\rho(\cdot,t)\|_{L^\infty}&\leq C_2 \ee^{-C_3t}
\end{align}
with 
    \begin{equation*}
        C_3 = |\ln\left(\CC(d)\right)|\,.
    \end{equation*}

    \item[\textbf{(b)}] For $\alpha(t)$ of type "on-off" as in \eqref{hyp-on-alpha_ON-OFF}, with $T_1\ge 1$, one has \eqref{decay-J-rho} with 
    \begin{equation*}
        C_3 = \frac{[T_1]}{T_2} |\ln\left(\CC(d)\right)|
    \end{equation*}
    where $[T_1]\ge 1$ denotes the integer part of $T_1$.
\end{itemize}
\end{theorem}

In addition to the previous statement, if $(\rho_0, J_0)\in BV(I)$, then the approximate solutions $(\rho^\DX,J^\DX)(x,t)$ of \eqref{DWE-rho-J-IBVP-linear}, 
\eqref{init-boundary-data} as defined in Section~\ref{subsec:approximate} satisfies the $L^\infty$ error estimate \eqref{estim:fpm-time-h} 
established in Theorem~\ref{th:Linfty-d}\,.

\begin{rmk} Some final remarks are in order.

(a) In terms of the damped wave equation \eqref{DWE}, Theorem~\ref{main-theorem-3-applications} can yield a result on the decay in $W^{1,\infty}$ of the solution $u$ towards zero. Indeed the function
$$
u(x,t) ~\dot = \int_0^x \rho(x',t)\, dx'\,, \qquad x\in (0,1)
$$
is Lipschitz continuous in $x$, satisfies $u(0,t)=u(1,t)=0$ because of \eqref{eq:int-rho-const} and
$$
\|u(\cdot,t)\|_\infty \le  \|\rho(\cdot,t)\|_\infty\,. 
$$
Hence if $\rho(\cdot,t)$ converges to 0 in $L^{\infty}$, then $u(\cdot,t)$ converges to 0 in $W^{1,\infty}$. 

For a rigourous proof of a decay estimate for the semilinear wave equation, one should prove that such $u \in C^0\left(\R_+; H_0^1(I)\right)\times C^1\left(\R_+; L^2(I)\right)$ and that it is a solution of \eqref{DWE} together with boundary conditions
$u(0,t)=u(1,t)=0$ and initial conditions
$$
u(x,0) = u_0(x) = \int_0^x \rho_0(x')\, dx'\,,\qquad \partial_t u(x,0) = J_0(x)\,.
$$

\smallskip
(b) The result in Theorem~\ref{main-theorem-3-applications}, case \textbf{(a)} extends readily to the case of non-zero, constant boundary conditions for $J$. Indeed consider the system \eqref{DWE-rho-J-IBVP} together with initial data $(\rho_0, J_0)\in L^\infty(I)$ and boundary conditions
\begin{equation}\label{eq:non-zero-const-bc-J}
J(0,t)= J(1,t)=\beta \in \R\,.    
\end{equation}
Let's define
\begin{equation*}
\rho_\beta(x)  = - 2 g(\beta) \int_0^x k(y)\,dy + C 
\end{equation*}
where the constant $C$ is identified uniquely by the property of conservation of mass:
\begin{equation*}
    \int_0^1  \rho_\beta(x)\,dx = \int_0^1 \rho_0(x)\,dx\,.
\end{equation*}
If $\alpha(t)\equiv 1$, then the change of variables
\begin{equation*}
v=\rho- \rho_\beta,\qquad w=J -  \beta\,,
\end{equation*}
on the system \eqref{DWE-rho-J-IBVP} yields 
\begin{equation}\label{eq:homog}
\begin{cases}
\partial_t v +  \partial_x w  = 0 &\\
\partial_t  w+  \partial_x v = - 2  k(x) \widetilde g (w; \beta) &\qquad \widetilde  g  (w; \beta)=  g( \beta + w) - g(\beta)
\end{cases}
\end{equation}
together with initial-boundary conditions
\begin{equation*}
(v,w)(\cdot,0) =(\rho_0 - \rho_\beta, J_0 - \beta)(\cdot)\,,  \qquad \qquad w(0,t)= w(1,t)=0
\end{equation*}
where $w\mapsto  \widetilde g  (w; \beta)$ has the same properties of $g$ in \eqref{hyp-weaker-on-k-and-g} with $\sup g' = \sup \widetilde g'$ 
on corresponding bounded domains, and $\int_I v_0\,dx =0\,.$ Therefore a decay estimate for $J(\cdot, t) - \beta$, 
$\rho(\cdot, t) - \rho_\beta(\cdot)$ holds as in \eqref{decay-J-rho}. On the other hand,  in the on-off case \textbf{(b)} with boundary conditions \eqref{eq:non-zero-const-bc-J} and $\beta\not =0$, 
the non--constant function $\rho_\beta(x)$ is no longer stationary and the long time behavior of $(\rho,J)(\cdot,t)$ requires further investigation.
\end{rmk}

\smallskip\par\noindent
The paper is organized as follows. In Section~\ref{Sec:2} we recall some preliminaries on Riemann problems for a hyperbolic system
which is a $3\times3$ extended version of \eqref{DWE-rho-J-IBVP}, and prove interaction estimates that take into account of the time change 
of the damping term.
In Section~\ref{sec:approximate} we provide the proof of Theorem~\ref{theorem:well-posedness} by following the approach considered in 
\cite{A-A-DS2018}, which is readily adapted to the time-varying source term of the system~\eqref{DWE-rho-J-IBVP}. In section~\ref{sec:iter-matrix-discrete}, we study the representation of the approximate solution which turns out to be a vector representation, see Lemma~\ref{prop:representation-J-rho}. In Section~\ref{sec:linear-case}, we prove Theorem~\ref{main-theorem-2} and, finally,
in Section~\ref{sec:6} we prove Theorem~\ref{main-theorem-3-applications}.

%
%

\section{Preliminaries}\label{Sec:2}
\setcounter{equation}{0}

In terms of the diagonal variables $f^\pm$, defined by
\begin{equation}\label{diag-var}
\rho=f^+ + f^-\,,\qquad  J=f^+ - f^-
\end{equation}
the system~(\ref{DWE-rho-J-IBVP}) rewrites as a discrete-velocity kinetic model
\begin{equation}
\begin{cases}
\partial_t f^- -  \partial_x f^- = {k(x) \alpha(t)} \,g(f^+ - f^-),  &
\\
\partial_t f^+ +  \partial_x f^+  = -  {k(x)\alpha(t)}\, g(f^+ - f^-) \,. &
\end{cases} \label{GT}
\end{equation}
\subsection{The time-independent case: the Riemann problem}
In the following we assume that $\alpha(t)\equiv 1$. Then \eqref{DWE-rho-J-IBVP} and \eqref{GT} can be rewritten, respectively, as
\begin{equation}
\begin{cases}
\partial_t\rho +  \partial_x J  & = 0\,, \\
\partial_t J  +  \partial_x \rho + 2 g(J)  \partial_x a & =0\,,  \\
\partial_t a &=0\,,
\end{cases}\qquad a(x)=\int_0^x k(y)\,dy  \label{DWE-rho-J-a}
\end{equation}
and
\begin{equation}
\begin{cases}
\partial_t f^- - \partial_x f^- - g(f^+ - f^-)\partial_x a &=0\,, \\
\partial_t f^+ + \partial_x f^+ + g(f^+ - f^-)\partial_x a &=0 \,, \\
\partial_t a &=0\,. 
\end{cases} \label{NC-system}
\end{equation}
The characteristic speed of system \eqref{NC-system} are $\mp 1, 0$. 
We call \textit{$0$-wave curves} those characteristic curves corresponding to the speed $0$; they are related to the stationary equations for $f^\pm$,
that is 
\begin{equation}\label{eq:stationary-fpm}
\partial_x f^\pm = - g(f^+ - f^-)\partial_x a\,.
\end{equation}
We denote either by $(\rho_\ell,J_\ell,a_\ell)$,  $(\rho_r,J_r,a_r)$  or by $(f^-_\ell,f^+_\ell,a_\ell)$, $(f^-_r,f^+_r,a_r)$ 
the left and right states corresponding to Riemann data for \eqref{DWE-rho-J-a},  \eqref{NC-system} respectively.
\begin{proposition}\cite{AG-MCOM16}\label{prop:1}
Assume that $k(x)\ge 0$, $g(J)J\ge 0$ and consider the initial states 
$$
U_\ell=(\rho_\ell,J_\ell,a_\ell)\,,\qquad U_r=(\rho_r,J_r,a_r)
$$  
with corresponding states $(f^-_\ell,f^+_\ell,a_\ell)$\,, $(f^-_r,f^+_r,a_r)$ in the $(f^\pm,a)$ variables.
Assume $a_\ell\le a_r$ and set 
\begin{equation}\label{def:delta}
\delta ~\dot = ~a_r - a_\ell\ge 0\,. 
\end{equation}
Then the following holds.

\begin{itemize}
\item[(i)]  The solution to the Riemann problem for system \eqref{DWE-rho-J-a} and initial data $U_\ell,U_r$
is uniquely determined by
\begin{equation}\label{sol-RP}
U(x,t) = 
\begin{cases}
U_\ell& x/t<-1\\
U_*=(\rho_{*,\ell}, J_*,a_\ell) & -1<x/t<0\\
U_{**}=(\rho_{*,r}, J_*,a_r) & 0<x/t<1\\
U_r & x/t> 1
\end{cases}
\end{equation}
with 
\begin{equation}\label{J*_rho*}
J_*+ g(J_*)\delta = f^+_\ell - f^-_r\,,\qquad \rho_{*,r}-\rho_{*,\ell}= -2g(J_*)\delta\,,
\end{equation}
see Figure~\ref{fig:RP}. 

\vspace{3pt}

\item[(ii)] If $m<M$ are given real numbers, the square $[m,M]^2$ is invariant for the solution to the Riemann problem in the $(f^-,f^+)$-plane. 
That is, the solution $U(x,t)$ given in \eqref{sol-RP} satisfies 
\begin{equation}\label{eq:inv-domain}
f^\pm (x,t) \in  [m,M]  
\end{equation}
for any $(f^-_\ell,f^+_\ell)$,  $(f^-_r,f^+_r) \in [m,M]^2$ and for any $\delta\ge 0$.

\vspace{3pt}

\item[(iii)] For every pair $U_\ell$, $U_r$ 
with $(f^-_\ell,f^+_\ell)$,\  $(f^-_r,f^+_r) \in [m,M]^2$, let $\sigma_{-1} = (J_* - J_\ell)$ and  $\sigma_{1} = (J_r-J_*)$.
Hence, 
\begin{equation}\label{ineq:sizes}
\left| |\sigma_1| - |f^+_r - f^+_\ell|\right| \le C_0 \delta\,,\qquad \left| |\sigma_{-1}| - |f^-_r - f^-_\ell|\right| \le C_0 \delta\,,
\end{equation}
where 
\begin{equation}\label{def:C0}
C_0 = \max\{g(M-m), - g(m-M)\}\,.
\end{equation}
\end{itemize}
\end{proposition}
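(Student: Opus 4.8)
The plan is to use the fact that system \eqref{NC-system} has constant characteristic speeds $-1,0,+1$, so the three waves never interact and the Riemann solution is a superposition of three jumps ordered exactly as in \eqref{sol-RP}. First I would read off the Riemann invariants of each field. The fields associated to the speeds $\mp1$ are linearly degenerate, hence the corresponding waves are contact discontinuities: across the $-1$-wave the components $f^+$ and $a$ stay constant and only $f^-$ jumps, while across the $+1$-wave $f^-$ and $a$ stay constant and only $f^+$ jumps. Across the stationary $0$-wave, where $a$ jumps by $\delta$, the stationary relations \eqref{eq:stationary-fpm} give $\partial_x(f^+-f^-)=0$ and $\partial_x(f^++f^-)=-2g(f^+-f^-)\partial_x a$; thus $J=f^+-f^-$ is preserved across it — which is why a single value $J_*$ occurs in \eqref{sol-RP} — and integrating the second relation (with $g$ frozen at $g(J_*)$) gives $\rho_{*,r}-\rho_{*,\ell}=-2g(J_*)\delta$, the second identity in \eqref{J*_rho*}.

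For item (i), writing the invariants as $f^+_*=f^+_\ell$ and $f^-_{**}=f^-_r$ for the intermediate states $U_*,U_{**}$, and integrating the stationary ODE across the $0$-wave, I obtain $f^-_*=f^-_r+g(J_*)\delta$ and $f^+_{**}=f^+_\ell-g(J_*)\delta$. Imposing $J_*=f^+_*-f^-_*=f^+_\ell-f^-_*$ then reduces the whole problem to the scalar equation $J_*+g(J_*)\delta=f^+_\ell-f^-_r$, the first identity in \eqref{J*_rho*}. Existence and uniqueness of $J_*$ follow at once, since $\Phi(J):=J+g(J)\delta$ satisfies $\Phi'(J)=1+g'(J)\delta\ge1$ by $g'\ge0$ and $\delta\ge0$, so $\Phi$ is a strictly increasing bijection of $\R$; all remaining components of \eqref{sol-RP} are then uniquely fixed.

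For item (ii), the only components not already in $[m,M]$ by the invariances $f^+_*=f^+_\ell$, $f^-_{**}=f^-_r$ are $f^-_*$ and $f^+_{**}$, and the device is to keep each in two equivalent forms: $f^-_*=f^-_r+g(J_*)\delta=f^+_\ell-J_*$ and $f^+_{**}=f^+_\ell-g(J_*)\delta=f^-_r+J_*$. Because $\Phi$ is increasing with $\Phi(0)=0$, one has $\sign J_*=\sign(f^+_\ell-f^-_r)$, and $g(J)J\ge0$ forces $g(J_*)$ to have the same sign as $J_*$. If $J_*\ge0$ then the terms $g(J_*)\delta\ge0$ give the lower bounds $f^-_*\ge f^-_r\ge m$ and $f^+_{**}\ge f^-_r\ge m$, while $-J_*\le0$ and $-g(J_*)\delta\le0$ give the upper bounds $f^-_*\le f^+_\ell\le M$ and $f^+_{**}\le f^+_\ell\le M$; the case $J_*\le0$ is symmetric. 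Hence all intermediate states lie in $[m,M]^2$.

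For item (iii), the $J$-jumps across the two contacts are $\sigma_{-1}=J_*-J_\ell=-(f^-_*-f^-_\ell)$ and $\sigma_1=J_r-J_*=f^+_r-f^+_{**}$, so $|\sigma_{-1}|=|f^-_*-f^-_\ell|$ and $|\sigma_1|=|f^+_r-f^+_{**}|$. Substituting the formulas above, $f^-_*-f^-_\ell=(f^-_r-f^-_\ell)+g(J_*)\delta$ and $f^+_r-f^+_{**}=(f^+_r-f^+_\ell)+g(J_*)\delta$, and the reverse triangle inequality gives $\big||\sigma_{\pm1}|-|f^\pm_r-f^\pm_\ell|\big|\le|g(J_*)|\,\delta$; since $f^\pm\in[m,M]$ forces $J_*\in[-(M-m),M-m]$ and $g$ is monotone, $|g(J_*)|\le\max\{g(M-m),-g(m-M)\}=C_0$, which is \eqref{ineq:sizes}. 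The conceptual crux is reading off the invariants correctly, especially the preservation of $J$ across the stationary wave; the one genuinely delicate step is the invariant-domain bound (ii), where neither expression for $f^-_*$ or $f^+_{**}$ controls it from both sides and the argument really depends on playing $f^-_r+g(J_*)\delta$ against $f^+_\ell-J_*$ together with the sign coincidence of $J_*$ and $g(J_*)$.
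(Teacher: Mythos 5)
Your proof is correct. The paper does not prove this proposition itself (it is quoted from \cite{AG-MCOM16}), but your argument — identifying the Riemann invariants of the two linearly degenerate $\mp1$ fields, integrating the stationary relations \eqref{eq:stationary-fpm} across the $0$-wave to get \eqref{J*_rho*}, solving the strictly monotone scalar equation $J_*+g(J_*)\delta=f^+_\ell-f^-_r$ (using the standing assumption $g'\ge 0$), and bounding the intermediate states via the two equivalent representations $f^-_*=f^-_r+g(J_*)\delta=f^+_\ell-J_*$ together with the sign coincidence of $J_*$ and $g(J_*)$ — is precisely the standard construction underlying the cited result.
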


\begin{figure} 
\centering
\begin{tikzpicture} 

\draw[thick] (2.5,0.5) -- (2.5,3);
\draw[thick] (0,0.5) -- (5,0.5);
\draw[thick] (2.5,0.5) -- (0,3);
\draw[thick] (2.5,0.5) -- (5,3);

\node () at (1,1) {$U_\ell$};
\node () at (4,1) {$U_r$};
\node () at (1.8,2) {$U_*$};
\node () at (3.2,2) {$U_{**}$};

\node () at (4.3,2.7) {$\sigma_1$};
\node () at (0.7,2.7) {$\sigma_{-1}$};
\node () at (2.2,2.7) {$\delta$};

\node () at (2.5,0.2) {$0$};
\end{tikzpicture}
\caption{Structure of the solution to the Riemann problem.}\label{fig:RP}
\end{figure}
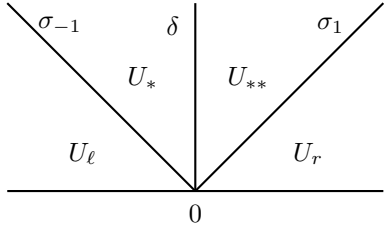

We stress that, in \eqref{ineq:sizes}--\eqref{def:C0}, the quantity $C_0$ is independent of $\delta\ge 0$. 

Here and in the following, we denote by $\Delta \phi(x)$ the difference $\phi(x+) - \phi(x-)$, where $\phi$ is a real-valued function 
defined on a subset of $\R$, and the limits $\phi(x\pm)= \lim_{y\to x\pm}\phi(y)$ exist. 

\smallskip\par
We define the amplitude of $\pm1$--waves as follows:
\begin{equation}\label{def-sizes}
\sigma_{\pm1} =  \Delta J = \pm \Delta f^\pm = \pm \Delta \rho \,. 
\end{equation}
In particular, with the notation of Figure~\ref{fig:RP}, we have
\begin{equation*}
\begin{split}
J_r-J_\ell
 &=  \sigma_{1}+ \sigma_{-1} \\
\rho_r-\rho_\ell &= \sigma_{1} - \sigma_{-1} -2g(J_*)\delta\,.
\end{split}
\end{equation*}

\subsection{The time-dependent case: interaction estimates}
As time evolves, the wave-fronts that stem from $t=0$ propagate and interact between each other; also the coefficient $\alpha(t)$ changes in time.
In order to get a-priori estimates on their total variation and $L^\infty$--norm, we study the interactions of waves in the solutions 
to \eqref{NC-system}. 

In \cite[Proposition~3]{A-A-DS2018}, the multiple interaction of two $\pm1$ waves with a single $0$--wave of size $\delta=a_r - a_\ell>0$ is studied.
The following proposition extends such a statement to the case in which the time dependent coefficient $\alpha(t)$ has a jump at the time of the interaction. We clarify that the values of $a_\ell$ and $a_r$, respectively on the left and on the right of the $0$--wave, do not change across the interaction; this is related to the third equation in \eqref{NC-system}.

\begin{proposition} (Multiple interactions, time-dependent case)\label{prop:multiple} 
Assume that at a time $\bar t>0$ an interaction involving a $(+1)$--wave, a $0$--wave and a $(-1)$--wave  occurs, see Figure~\ref{fig:multiple}.
Let  $\delta$ be as in \eqref{def:delta} and $\alpha^\pm \ge 0$ be given, so that $\alpha(t) = \alpha^+$ for $t>\bar t$ and  
$\alpha(t) = \alpha^-$ for $t<\bar t$. Assume that 
\begin{equation}\label{A-less-than-1}
(\sup g') \delta \alpha^\pm <1\,.
\end{equation}

Let  $\sigma^-_{\pm1}$ be the sizes (see \eqref{def-sizes}) of the incoming waves and $\sigma^+_{\pm1}$ be the sizes of the outgoing ones.
Let $J^\pm_*$ be the intermediate values of $J$ (which are constant across the $0$--wave), before and after the interaction 
as in Figure~\ref{fig:multiple}, and choose a value $s\in (\min{J^\pm_*}, \max{J^\pm_*})$ such that
\begin{equation}\label{property-of-s}
g'(s) = \frac{g(J^+_*)-g(J^-_*)}{J^+_* - J^-_*}\,.
\end{equation}
Then, for $\gamma^\pm ~\dot =~ g'(s)\delta \alpha^\pm$\,,  
it holds
\begin{equation}\label{mult-inter-matrix-form}
\begin{pmatrix}
\sigma^+_{-1}\\    \sigma^+_1
\end{pmatrix} = \frac{1}{1+\gamma^-}\begin{pmatrix}
1&\gamma^-\\
\gamma^-&1
\end{pmatrix} 
\begin{pmatrix}
\sigma^-_{-1}\\
\sigma^-_1
\end{pmatrix}
+(\alpha^+ - \alpha^-) \delta\,  \frac{g(J_*^+)}{1+\gamma^-}\begin{pmatrix}
-1\\
+1
\end{pmatrix}\,, 
\end{equation}
and similarly
\begin{equation}\label{mult-inter-matrix-form-gammaPLUS}
\begin{pmatrix}
\sigma^+_{-1}\\    \sigma^+_1
\end{pmatrix} = \frac{1}{1+\gamma^+}\begin{pmatrix}
1&\gamma^+\\
\gamma^+&1
\end{pmatrix} 
\begin{pmatrix}
\sigma^-_{-1}\\
\sigma^-_1
\end{pmatrix}
+  (\alpha^+ - \alpha^-)\delta \, \frac{ g(J_*^-) }{1+\gamma^+}\begin{pmatrix}
-1\\
+1
\end{pmatrix}\,.
\end{equation}
Moreover,
\begin{align}\label{prop:id1}
\sigma^+_1 +  \sigma^+_{-1} &= \sigma_1^- +  \sigma^-_{-1} \\
\label{eq:no-decay}
|\sigma^+_{-1}| + |\sigma^+_{1}|  &\le~  |\sigma^-_{-1}| + |\sigma^-_{1}| + 2 C_0 \delta |\alpha^+ - \alpha^-| 
\end{align}
with $C_0= \max\{g(M-m), - g(m-M)\}$ as in \eqref{def:C0}, together with
$$
m=\min \left\{f^{\pm}_{\ell}, f^{\pm}_{r}\right\}, \qquad M=\max  \left\{f^{\pm}_{\ell}\,, f^{\pm}_{r}\right\}\,.
$$
\end{proposition}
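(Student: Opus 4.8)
The plan is to reduce everything to bookkeeping of the four states surrounding the interaction point together with the two intermediate values $J^-_*$ and $J^+_*$. First I would label the states. Just before $\bar t$, reading from left to right: a leftmost state $U_a$ (at $a=a_\ell$), the incoming $(+1)$--wave, the state $U_b$, the $0$--wave, the state $U_c$ (at $a=a_r$), the incoming $(-1)$--wave, and a rightmost state $U_d$. Just after $\bar t$: the same outer states $U_a,U_d$ now connected through the outgoing $(-1)$--wave, a new state $U_{b'}$, the (unchanged) $0$--wave, a new state $U_{c'}$, and the outgoing $(+1)$--wave. That the \emph{outer} states $U_a,U_d$ are unchanged is because the interaction is local and no other fronts are present, so the region to the left of the outgoing $(-1)$--wave is still $U_a$ and likewise $U_d$ on the right. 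Recalling from \eqref{def-sizes} that a $(+1)$--wave carries a jump only in $f^+$ (with $f^-$ constant) and a $(-1)$--wave only in $f^-$ (with $f^+$ constant), and that $J$ is constant across the $0$--wave, one has $J_b=J_c=J^-_*$ and $J_{b'}=J_{c'}=J^+_*$.

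With this labelling I would express all six sizes through the $J$--values: one finds $\sigma^-_1=J^-_*-J_a$, $\sigma^-_{-1}=J_d-J^-_*$, $\sigma^+_{-1}=J^+_*-J_a$, $\sigma^+_1=J_d-J^+_*$. Adding the incoming pair and the outgoing pair both gives $J_d-J_a$, which is exactly \eqref{prop:id1}. Next I would write the $0$--wave jump relation as $f^+_\ell-f^-_r=J_*+\alpha\, g(J_*)\delta$, the natural generalisation of \eqref{J*_rho*} to the source coefficient $\alpha$ (it follows by integrating the stationary relations \eqref{eq:stationary-fpm} with the extra factor $\alpha$ across the $0$--wave). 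Applying it before the interaction with $\alpha^-$ and after with $\alpha^+$, and using $f^+_{b'}=f^+_a$, $f^-_{c'}=f^-_d$ together with $f^+_b=f^+_a+\sigma^-_1$ and $f^-_c=f^-_d+\sigma^-_{-1}$, the two relations combine into the single scalar identity
\begin{equation*}
(J^-_*-J^+_*)+\delta\bigl(\alpha^- g(J^-_*)-\alpha^+ g(J^+_*)\bigr)=\sigma^-_1-\sigma^-_{-1}\,.
\end{equation*}

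The key linearisation step is to remove the nonlinearity via the mean-value point $s$ of \eqref{property-of-s}: writing $g(J^+_*)=g(J^-_*)+g'(s)(J^+_*-J^-_*)$ and recalling $\gamma^\pm=g'(s)\delta\alpha^\pm$, the scalar identity solves for $J^+_*-J^-_*$ as a ratio with denominator $1+\gamma^+$, and the two outgoing sizes $\sigma^+_{-1}=\sigma^-_1+(J^+_*-J^-_*)$, $\sigma^+_1=\sigma^-_{-1}-(J^+_*-J^-_*)$ then assemble into \eqref{mult-inter-matrix-form-gammaPLUS}. Substituting instead $g(J^-_*)=g(J^+_*)-g'(s)(J^+_*-J^-_*)$ produces the denominator $1+\gamma^-$ and replaces $g(J^-_*)$ by $g(J^+_*)$ in the residual term, which is precisely \eqref{mult-inter-matrix-form}. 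The condition \eqref{A-less-than-1} guarantees $0\le\gamma^\pm<1$, so the matrices are well defined and the map $J_*\mapsto J_*+\alpha g(J_*)\delta$ is strictly monotone, whence the intermediate values $J^\pm_*$ are uniquely determined.

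Finally, \eqref{eq:no-decay} would follow by taking absolute values in either matrix form: since $\gamma^\pm\ge 0$, the matrix $\frac{1}{1+\gamma}\begin{pmatrix}1&\gamma\\\gamma&1\end{pmatrix}$ is column-stochastic, hence $\ell^1$--nonexpansive, so $|\sigma^+_{-1}|+|\sigma^+_1|\le|\sigma^-_{-1}|+|\sigma^-_1|+2\delta|\alpha^+-\alpha^-|\,|g(J^\mp_*)|$, and the bound $|g(J^\mp_*)|\le C_0$ comes from $J^\mp_*\in[m-M,M-m]$ (as $f^\pm\in[m,M]$), the monotonicity of $g$, and the definition \eqref{def:C0}. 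I expect the only delicate points to be organisational: fixing the spatial ordering of the states and the sign conventions of \eqref{def-sizes} so the six sizes carry the correct signs, and justifying that the outer states are genuinely unchanged across the interaction. Once these are pinned down, the two matrix identities and the estimate are essentially forced.
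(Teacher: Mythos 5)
Your proof is correct and follows essentially the same route as the paper: both rest on the jump relations \eqref{J*_rho*} across the $0$--wave (with effective size $\delta\alpha^\pm$) before and after the interaction, the mean-value linearisation $g(J^+_*)-g(J^-_*)=g'(s)(J^+_*-J^-_*)$, and taking absolute values together with the invariant-domain bound $|J^\mp_*|\le M-m$ to get \eqref{eq:no-decay}. The only difference is organisational — you solve explicitly for $J^+_*-J^-_*$ in terms of the incoming data alone, whereas the paper expresses it through both incoming and outgoing sizes and then inverts a $2\times2$ system in $\sigma^+_1\pm\sigma^+_{-1}$ — and your version is, if anything, slightly more direct.
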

\begin{figure}
\centering
\begin{tikzpicture} 

\draw[thick] (2.5,0.5) -- (2.5,3.5);
\draw[black,fill=gray] (1,0.5) -- (2.5,2.0) -- (2.5,0.5);
\draw[black,fill=gray] (2.5,0.5) -- (2.5,2.0) -- (4,0.5);
\draw[black,fill=gray] (1,3.5) -- (2.5,2.0) -- (2.5,3.5);
\draw[black,fill=gray] (2.5,3.5) -- (2.5,2.0) -- (4,3.5);
\draw[black,dashed] (0,2) -- (5,2);

\draw (5,2.5) node[draw,fill=white,rounded corners] (a) {$J_*^+$};
\draw[line width=1pt,color=red,-stealth] (a.west) to[bend left] (2.7,3.2);

\draw (5,1.3) node[draw,fill=white,rounded corners] (a) {$J_*^-$};
\draw[line width=1pt,color=red,-stealth] (a.west) to[bend right] (2.7,1.0);

\node () at (-1,2) {$t=\bar t$};

\node () at (4.1,3.1) {$\sigma_1^+$};
\node () at (0.9,3.1) {$\sigma_{-1}^+$};

\node () at (0.9,1) {$\sigma_1^-$};
\node () at (4.1,1) {$\sigma_{-1}^-$};

\node () at (2.5,0.3) {$\delta \alpha^-$};
\node () at (2.5,3.7) {$\delta \alpha^+$};
\end{tikzpicture}
\caption{Multiple interaction, time-dependent case. 
}\label{fig:multiple}
\end{figure}
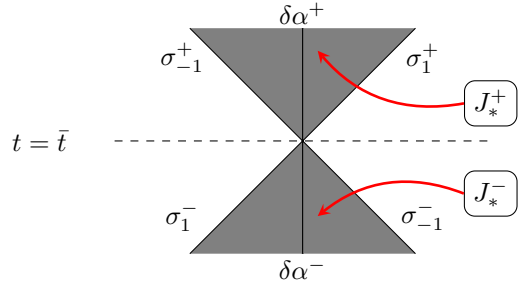
\begin{rmk} 
\begin{enumerate}[(a)]
\item If $\alpha(t)$ is as in \eqref{hyp-on-alpha_ON-OFF}, the \fbox{\sc{on--off}} time corresponds to $\alpha^- =1$,  $\alpha^+=0$ 
while the \fbox{\sc{off--on}} time corresponds to $\alpha^- =0$,  $\alpha^+=1$\,.
\item With the notation of Proposition~\ref{prop:multiple}, one has
\begin{equation}\label{inv-domain}
f^{\pm}_{*, \ell},~f^{\pm}_{*, r}   \in [m, M]\,,\quad |s|\le M-m
\end{equation} 
where $f^{\pm}_{*, \ell}, f^{\pm}_{*, r}$ are the intermediate states after the interaction time.

Indeed, as a consequence of Proposition~\ref{prop:1}--{\it (ii)}, the values $f^{+}_{*, \ell},~f^{+}_{*, r}$ belong to $[m, M]$. Using the same argument of the
proof of Proposition~\ref{prop:1} in \cite{AG-MCOM16}, one can conclude that the same property holds also for the intermediate state {\bf before} 
the interaction, that is,  $f^{-}_{*, \ell},~f^{-}_{*, r} \in [m, M]$. As a consequence, both the intermediate values $J_*^\pm$ satisfy
$$
|J_*^\pm|\le M-m
$$
and hence, by the intermediate value theorem used in \eqref{property-of-s}, we obtain that $|s|\le M-m$.
\end{enumerate}
\end{rmk}

\begin{proof}[Proof of Proposition~\ref{prop:multiple}]
Let $J_*^-$, $J_*^+$ be the intermediate values of $J$ before and after the interaction, respectively. 
By \eqref{J*_rho*} these values satisfy 
\begin{equation*}
J^+_* + g(J^+_*) \delta \alpha^+ =  f_\ell^+ - f_r^-\,, \qquad J^-_* - g(J^-_*) \delta  \alpha^- =  f_r^+ - f_\ell^-\,.
\end{equation*}
Since the quantity $J_r-J_\ell$ remains constant across the interaction, we get 
$$
J_r-J_\ell = (J_r- J^+_*) + (J^+_*-J_\ell) =  (J_r- J^-_*) + (J^-_*-J_\ell)\,.
$$
Then, by the definition \eqref{def-sizes} of the sizes ($\sigma_{\pm 1} = \Delta J$) we deduce the identity \eqref{prop:id1}.
Using again \eqref{J*_rho*} and \eqref{def-sizes}, the same procedure applied to $\rho_r-\rho_\ell$ and the fact that 
$\sigma_{\pm 1} = \pm \Delta \rho$ lead to the following identity:
\begin{equation*}
\sigma^+_1 - \sigma^+_{-1} - 2 g(J^+_*) \delta  \alpha^+ = \sigma_{1}^- -  \sigma^-_{-1} - 2 g(J^-_*) \delta  \alpha^-\,,
\end{equation*}
that can be rewritten as
\begin{align}\nonumber
\sigma^+_1 - \sigma^+_{-1} &= \sigma_{1}^- -  \sigma^-_{-1} + 2\left[ g(J^+_*) - g(J^-_*) \right]\delta  \alpha^- + 2 g(J^+_*) \delta (\alpha^+ - \alpha^-)\\
&= \sigma_{1}^- - \sigma^-_{-1} + 2 g'(s) \left[ J^+_* - J^-_* \right]\delta  \alpha^- + 2 g(J^+_*) \delta ( \alpha^+ -  \alpha^-) \label{ident-sigma-multi-line-3}
\end{align}
for $s$ as in \eqref{property-of-s}. Notice that 
\begin{equation*}
J^+_* - J^-_* = (J^+_* - J_r) + (J_r - J^-_*) = - \sigma_1^+ + \sigma_{-1}^-
\end{equation*}
and, replacing $J_r$ with $J_\ell$, one has 
\begin{equation*}
J^+_* - J^-_* =  \sigma_{-1}^+ - \sigma_{1}^-\,.
\end{equation*}
Since both equations are true, then one can combine them and write 
\begin{equation*}
J^+_* - J^-_* =  \frac12 \left(\sigma_{-1}^+ - \sigma_{1}^+ + \sigma_{-1}^-  - \sigma_{1}^-\right) \,.
\end{equation*}
By substitution into \eqref{ident-sigma-multi-line-3}, we get
\begin{align*}
\sigma^+_{1} - \sigma^+_{-1} &= \sigma_1^- -  \sigma_{-1}^-  + g'(s)\left(\sigma_{-1}^+ - \sigma_{1}^+ + \sigma_{-1}^- - \sigma_{1}^-\right)
\delta  \alpha^- + 2 g(J^+_*)\delta ( \alpha^+ -  \alpha^-)\,,
\end{align*}
which, for $\gamma^- ~\dot =~ g'(s)\delta  \alpha^-$ leads to 
\begin{equation*}
\left( 1+\gamma^- \right) \left(\sigma^+_{1} - \sigma^+_{-1}\right) = \left( 1- \gamma^- \right) 
\left( \sigma_1^- - \sigma_{-1}^- \right) + 2 g(J^+_*) \delta ( \alpha^+ -  \alpha^-)\,.
\end{equation*}
In conclusion, recalling \eqref{prop:id1}, we have the following $2\times2$ linear system
\begin{align*}
\sigma^+_1 +  \sigma^+_{-1} & = \sigma_1^- +  \sigma^-_{-1}  \nonumber
\\
\sigma^+_{1} - \sigma^+_{-1} &= \frac{1-\gamma^-}{1+\gamma^-}  \left( \sigma_1^- -  \sigma_{-1}^- \right) + \frac{2 g(J^+_*) 
\delta (\alpha^+ - \ \alpha^-)}{1+ \gamma^-} 
\end{align*}
whose solution is given by \eqref{mult-inter-matrix-form}. The proof of \eqref{mult-inter-matrix-form-gammaPLUS} is completely similar. 
Finally, by taking the absolute values in \eqref{mult-inter-matrix-form}, we get
\eqref{eq:no-decay}.
This concludes the proof of Proposition~\ref{prop:multiple}.
\end{proof}

\section{Approximate solutions and well-posedness}\label{sec:approximate}
\setcounter{equation}{0}

This section is devoted to the construction of a family of approximate solutions to the problem \eqref{DWE-rho-J-IBVP}, 
\eqref{init-boundary-data}. In Subsection~\ref{subsec:approximate} we will describe the algorithm, that follows the approach in \cite{A-A-DS2018},
while in Subsections~\ref{subsec:3.2}--\ref{subsec:3.3} we provide a-priori estimates on such approximations.

More generally, the approximation scheme follows the \emph{well-balanced} approach introduced in \cite{goto,laurent_Book} and employed in 
\cite{AG-MCOM16,AG-Briefs15,AG-AnIHP16} for the Cauchy problem. Also, the approximate solutions that are constructed here, 
are \emph{wave-front tracking} solutions (see \cite{Bressan_Book}) of the system~\eqref{DWE-rho-J-a} or, equivalently, \eqref{NC-system}.
  
Finally, in Subsection~\ref{subsec:convergence}, we prove the convergence of the approximate solutions in the $BV$ setting and use the 
stability in $L^1$, together with a density argument, to show the existence and stability for $L^\infty$ initial data $(\rho_0,J_0)$, thus completing 
the proof of Theorem~\ref{theorem:well-posedness}\,.  

\subsection{Approximate solutions}\label{subsec:approximate}
In this subsection, following \cite{A-A-DS2018}, we construct a family of approximate solutions for the initial--boundary value problem 
associated to system~\eqref{DWE-rho-J-a} and initial, boundary conditions \eqref{init-boundary-data} with $(\rho_0, J_0)\in BV(I)$ and 
\begin{equation}\label{eq:zero-mean-rho}
\int_I \rho_0(x)\,dx=0\,.
\end{equation}
Let $N\in 2\N$ and set
\begin{equation*}
\DX=\DT=\frac 1N\,,\qquad x_j = j\DX \ (j=0,\ldots,N) \,,\qquad t^n=n\DT\ (n\ge 0)\,.
\end{equation*}
The size of the $0$-wave at a point $0<x_j<1$ is given by
\begin{align}\label{delta-j}
\delta_j = \int_{x_{j-1}}^{x_{j}} k(x) dx\,, \qquad j=1,\ldots, N-1\,. 
\end{align}
Assume $\DX=1/N$ small enough so that  
\begin{equation}\label{delta-j-small}
\sup g'(J) \|\alpha\|_\infty  \cdot \delta_j <1\,.
\end{equation}
The functions 
$$
f_0^- = \frac12 \left(\rho_0 - J_0\right)\,,\qquad f_0^+ = \frac12 \left(\rho_0 + J_0\right)
$$
clearly belong to $BV(I)$. In terms of the system \eqref{NC-system}, we 
approximate the initial data $f_0^\pm$ and $a(x)$ as follows:
\begin{equation}\label{init-data-approx}
(f_0^\pm)^\DX(x) = f_0^\pm(x_j+)\,,\qquad     a^\DX(x) = a(x_j)=\int_0^{x_j}k \,, \qquad x\in(x_j,x_{j+1})\,. 
\end{equation}
Recalling that $\int \rho_0 \,dx =0$ and that $\rho=f^++f^-$, we easily deduce the following inequality:
\begin{equation}\label{int-rho-DX}
\left| \int_I \left[ (f_0^+)^\DX + (f_0^-)^\DX \right]\,dx \right| \le \DX \tv \rho_0\,. 
\end{equation}
Finally we approximate $\alpha(t)$ in a natural way as follows:
\begin{equation}\label{def:alpha_n}
\alpha_n(t) = \bar \alpha_n := \alpha(t^n+) \qquad \mbox{for  }t\in [t_n,t_{n+1})\,,\quad n\ge 0.
\end{equation}
Beyond the adaptation to the time-dependence of the source term in \eqref{DWE-rho-J-IBVP},
the construction is completely similar to the one in \cite[Section 3]{A-A-DS2018}, leading to the definition of an approximate solution
$(f^\pm)^\DX(x,t)$ and hence of $\rho^\DX$, $J^\DX$. In the rest of this section, as far as there is no ambiguity in the notation, 
we will drop the $\DX$ and will refer to $(f^\pm)(x,t)$ as an approximate solution with fixed parameter $\DX>0$.

\subsection{Invariant domains}\label{subsec:3.2}
Recalling Proposition~\ref{prop:1}-(ii), the set
\begin{equation}\label{def:inv-dom}
D=[m,M]\times[m,M]\,,  \qquad M= \esssup_I{f_0^\pm}\,,\quad m = \essinf_I{f_0^\pm}
\end{equation}
is an invariant domain for the solution to the Riemann problem in the $(f^-,f^+)$-variables. Let
\begin{equation}\label{D_J-def}
J_{\max}= M-m 
\,,\qquad D_J= [- J_{\max}, J_{\max}]\,.
\end{equation}
Here $D_J$ denotes the closed interval which is the projection of $D$ on the $J$-axis.

\smallskip
It is easy to verify that $D$ is invariant also under the solution to the Riemann problem at the boundary. 
Indeed, assume that there is a $(-1)$-wave impinging on the boundary $x=0$ at a certain time $\bar t$ with a $+1$ reflected wave.
Let $(\bar f^-, \bar f^+)\in D$ be the state on the right of the impinging/reflected wave.
Hence 

$\bullet$\quad the state between $x=0$ and the impinging wave, for $t<\bar t$, is $(\bar f^+, \bar f^+)$,

$\bullet$\quad the state between $x=0$ and the reflected wave, for $t>\bar t$, is $(\bar f^-, \bar f^-)$,

\smallskip\par\noindent
and both these states belong to $D$. Finally we claim that $m\le 0\le M$. Indeed, since $\int_I \rho_0 =0$, then 
$$\essinf \rho_0 \le 0  \le \esssup \rho_0\,.$$
Using the elementary inequalities 
$\max\{ x+y,x-y\} \ge x \ge \min \{ x+y,x-y\}$, and recalling 
that $f^\pm = (\rho\pm J)/2$, we deduce that
$$
2 \essinf f^\pm_0\le \essinf \rho_0 \le 0  \le  \esssup \rho_0
\le 2 \esssup f^\pm_0
$$
and hence the claim.

All these properties are summarized in the following proposition.

\smallskip\par\noindent
\begin{proposition} \label{prop:inv-domains}
Under the assumptions of Theorem~\ref{theorem:well-posedness}, one has that
\begin{equation}\label{bound-on-M-m}
m\le 0 \le M\,.
\end{equation}
Moreover for every $t\ge 0$ the following holds:
\begin{equation}\label{bound-on-fpm}
m\le f^\pm(x,t) \le M
\end{equation}
and hence, by means of \eqref{diag-var}, 
\begin{equation}\label{bound-on-rho-J}
2m \le \rho(x,t)\le 2M\,, \qquad |J(x,t)| \le M - m
\end{equation}
with $m$, $M$ given in \eqref{def:inv-dom}.
\end{proposition}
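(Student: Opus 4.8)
The plan is to collect the three assertions, each of which rests on facts established just above the statement. The bound \eqref{bound-on-M-m}, namely $m\le 0\le M$, is already in hand: since $\int_I \rho_0=0$ forces $\essinf\rho_0\le 0\le\esssup\rho_0$, applying the elementary inequalities $\max\{x+y,x-y\}\ge x\ge\min\{x+y,x-y\}$ to $\rho_0=f_0^++f_0^-$ and $J_0=f_0^+-f_0^-$ yields $2\essinf f_0^\pm\le\essinf\rho_0\le 0\le\esssup\rho_0\le 2\esssup f_0^\pm$, that is $m\le 0\le M$ with $m,M$ as in \eqref{def:inv-dom}.

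For \eqref{bound-on-fpm} I would argue that the invariant domain $D=[m,M]^2$ is preserved along the whole evolution of the approximate solution. At $t=0$ this holds by construction, because by \eqref{init-data-approx} the piecewise-constant datum $(f_0^\pm)^\DX$ takes only the values $f_0^\pm(x_j+)\in[m,M]$. The approximate solution is a wave-front tracking solution of \eqref{NC-system} obtained by concatenating Riemann solvers at the interior grid points (each involving two $\pm1$-waves and a $0$-wave of size $\delta_j\bar\alpha_n\ge 0$) and reflection problems at the boundaries $x=0,1$. I would then proceed by induction over the locally finite sequence of interaction and reflection times. At an interior interaction, Proposition~\ref{prop:1}-(ii) guarantees that the square $[m,M]^2$ is invariant for every $\delta\ge 0$; since the only effect of the discretized coefficient $\alpha_n(t)=\bar\alpha_n\ge 0$ from \eqref{def:alpha_n} is to rescale $\delta_j\mapsto\delta_j\bar\alpha_n\ge 0$, this invariance is unaffected by the time dependence. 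At a boundary reflection, the explicit description recorded just above the proposition applies: if $(\bar f^-,\bar f^+)\in D$ is the adjacent state, the intermediate states $(\bar f^+,\bar f^+)$ before and $(\bar f^-,\bar f^-)$ after reflection both lie in $D$. Hence $f^\pm(x,t)\in[m,M]$ for all $t\ge 0$, which is \eqref{bound-on-fpm}.

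Finally, \eqref{bound-on-rho-J} is immediate from the change of variables \eqref{diag-var}: writing $\rho=f^++f^-$ and $J=f^+-f^-$ with $f^\pm\in[m,M]$ gives $2m\le\rho\le 2M$ and $-(M-m)\le J\le M-m$, i.e. $|J|\le M-m$.

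The main point requiring care is the inductive propagation in the second paragraph: one must verify that neither the interior interactions nor the boundary reflections can push the diagonal variables out of $[m,M]$, uniformly in the number of fronts and across every time step, and in particular that the rescaling of the $0$-wave strengths by $\bar\alpha_n\in[0,1]$ does not interfere with the invariance of Proposition~\ref{prop:1}-(ii). Once this is secured, the statement reduces to bookkeeping.
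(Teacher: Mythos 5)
Your proposal is correct and follows essentially the same route as the paper: the bound $m\le 0\le M$ via the zero-mean condition and the elementary max/min inequality, the propagation of the invariant square $[m,M]^2$ through interior Riemann interactions (Proposition~\ref{prop:1}-(ii), with the time dependence only rescaling $\delta_j$ by $\bar\alpha_n\in[0,1]$) and through boundary reflections, and finally the bounds on $\rho$ and $J$ from $\rho=f^++f^-$, $J=f^+-f^-$. The only presentational difference is that you make the induction over interaction and reflection times explicit, whereas the paper leaves this bookkeeping implicit in the discussion preceding the proposition.
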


As a consequence of the properties above, the solution satisfies $J(x,t)\in D_J$ outside discontinuities.
\begin{rmk}\label{rem:inv-domain}
We remark that, given $m<M$, the bounds \eqref{bound-on-fpm}, \eqref{bound-on-rho-J} hold 

$\bullet$\quad  for every choice of source term coefficients $k(x)$, $g(J)$, $\alpha(t)$ as in \eqref{hyp-weaker-on-k-and-g}, \eqref{hyp-on-alpha};

$\bullet$\quad  for every (approximate) solution such that the initial data satisfies \eqref{init-data-approx} and the bounds 
$$
m \le  \essinf_I{f_0^\pm} \le  \esssup_I{f_0^\pm} \le M \,.
$$

\end{rmk}

\smallskip\noindent
We also remark that, in case of no source term (for instance if $k(x)\equiv0$), by the analysis of the Riemann problems one finds that 
the invariant domain is smaller than the square $D$, being the rectangle $[m^-,M^-]\times[m^+,M^+]$:
\begin{align*}
m^\pm \le f^\pm(x,t) \le  M^\pm\,,
\end{align*}
where 
$$
m^\pm\, \dot =\, \inf_I{f_0^\pm}\,,\qquad  M^\pm \, \dot = \,\sup_I{f_0^\pm}\,.
$$

\subsection{
Conservation of mass}
In this subsection we prove that the total mass of $\rho^\DX$ is conserved in time.

\begin{proposition} \label{prop:cons-of-mass} In the previous assumptions, one has
\begin{equation}\label{eq:deriv-int-rho-0}
\frac d{dt}  \int_I \rho^\DX(x,t)\,dx =  0 \,,
\end{equation}
and 
\begin{equation}\label{approximate-cons-mass}
\left| \int_I  \rho^\DX(x,t) \,dx \right| \le \DX \cdot \tv \rho_0\,. 
\end{equation}
\end{proposition}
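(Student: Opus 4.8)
The plan is to prove that the approximate solution $\rho^\DX$ is an \emph{exact} distributional solution of the scalar conservation law $\partial_t\rho + \partial_x J = 0$ on the interior $I\times\R_+$, and then to integrate in $x$ and discard the boundary flux. Since $\rho^\DX$ (equivalently $(f^\pm)^\DX$) is piecewise constant, with jumps located only along the $\pm1$--fronts and the stationary $0$--waves at the points $x_j$, it is enough to check that every such front satisfies the Rankine--Hugoniot relation $\lambda\,\Delta\rho = \Delta J$ attached to this conservation law, $\lambda$ being the speed of the front.

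First I would carry out this check front by front, using the size conventions \eqref{def-sizes} and the Riemann solver \eqref{sol-RP}--\eqref{J*_rho*}. Across a $+1$--front the speed is $\lambda=+1$ and $\Delta\rho = \Delta f^+ = \sigma_1 = \Delta J$; across a $-1$--front the speed is $\lambda=-1$, with $\Delta\rho = \Delta f^- = -\sigma_{-1}$ and $\Delta J = \sigma_{-1}$, so again $\lambda\,\Delta\rho = \Delta J$; across a stationary $0$--wave the speed is $\lambda=0$ and $J$ is continuous (the intermediate value $J_*$ is the same on both sides by \eqref{J*_rho*}), whence $\Delta J = 0 = \lambda\,\Delta\rho$, even though $\Delta\rho = -2g(J_*)\delta_j$ does not vanish. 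Thus all fronts are conservative for the first equation, and for $t$ outside the locally finite set of interaction and reflection times the transport rule gives $\frac{d}{dt}\int_I\rho^\DX\,dx = -\sum_i\lambda_i\,\Delta\rho_i = -\sum_i\Delta J_i = -\bigl(J(1^-,t)-J(0^+,t)\bigr)$, the last equality being a telescoping of the jumps of the piecewise constant function $J(\cdot,t)$.

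The decisive point is that this boundary flux vanishes. Here I would invoke the reflection structure recalled in Subsection~\ref{subsec:3.2}: between $x=0$ and the nearest front the state is always of the equal--component form $(f^-,f^+)=(c,c)$---namely $(\bar f^+,\bar f^+)$ before a reflection and $(\bar f^-,\bar f^-)$ after it---so that $J(0^+,t)=f^+-f^-=0$ for \emph{every} $t$; the symmetric argument at $x=1$ gives $J(1^-,t)=0$. Hence $\frac{d}{dt}\int_I\rho^\DX\,dx=0$ away from the interaction times, and since $t\mapsto\int_I\rho^\DX(\cdot,t)\,dx$ is continuous, the integral is constant, proving \eqref{eq:deriv-int-rho-0}. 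Combining this conservation with the initial--data bound \eqref{int-rho-DX}, which already controls $\bigl|\int_I[(f_0^+)^\DX+(f_0^-)^\DX]\,dx\bigr|$ by $\DX\,\tv\rho_0$, immediately yields \eqref{approximate-cons-mass}. I expect the only real obstacle to be bookkeeping rather than estimation: one must make sure that the state adjacent to each boundary is genuinely of equal--component form at all times (so that $J$ vanishes there identically, not merely on average), and that the continuity of $t\mapsto\int_I\rho^\DX\,dx$ across the countably many interaction/reflection times suffices to upgrade the a.e.\ vanishing derivative to exact constancy.
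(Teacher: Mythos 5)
Your proposal is correct and follows essentially the same route as the paper: the paper likewise uses the exact Rankine--Hugoniot relation $\sigma_j=\Delta J(y_j)=\Delta\rho(y_j)\dot y_j$ along the $\pm1$--fronts, notes that $t\mapsto\int_I\rho^{\DX}(x,t)\,dx$ is continuous and piecewise linear, telescopes $\sum_j\Delta J(y_j)=J(1-,t)-J(0+,t)=0$ using the exactly satisfied boundary conditions, and then deduces \eqref{approximate-cons-mass} from \eqref{int-rho-DX}. Your extra bookkeeping (the vanishing contribution of the stationary $0$--waves and the equal--component states adjacent to the boundaries) is consistent with, and merely makes explicit, what the paper states.
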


\begin{proof}
Let  
\begin{equation}\label{def:y-j}
y_1(t) < y_2(t) < \ldots < y_{2N}(t)\qquad \forall \, t >0\,,\ t\not = t^n,\ t\not =t^{n+1/2}
\end{equation}
be the location of the $\pm1$ waves at time $t$, that is, the location of all the possible discontinuities
(see Figure~\ref{fig:illustration-sigmaj}).  

By the Rankine-Hugoniot condition of the first equation 
in \eqref{DWE-rho-J-IBVP}, which is satisfied \emph{exactly} in the approximate solution, we have
\begin{equation}\label{RH-cond-1}
\sigma_j = \Delta J(y_j(t)) = \Delta \rho(y_j(t)) \dot y_j\,,\qquad j=1,\ldots,2N\,.
\end{equation}


%
\begin{figure}
\centering
\begin{tikzpicture} 
				\fill[white] (1,0) -- (3,0) -- (3,3) -- (1,3) -- (1,0);
				
				\draw[thick] (0,-0.5) -- (0,3);
				\draw[thick] (8,-0.5) -- (8,3);
				
				\draw[thick,dashed] (2,0) -- (2,2.5);
				\draw[thick,dashed] (4,0) -- (4,2.5);
				\draw[thick,dashed] (6,0) -- (6,2.5);
				
				\draw[thick] (0,0) -- (8,0);
				\node () at (-0.7,0) {$t=t^{n-1}$};
				
				\draw[thick] (0,2) -- (8,2);
				\node () at (-0.7,2) {$t=t^n$};			
			
		 \draw (-1.2,0.9) node[draw,fill=white,rounded corners] (a) {$y_1(t)$};
\draw[line width=1pt,color=red,-stealth] (a.east) to[bend left] (0.3,0.3);
\draw[line width=1pt,color=red,-stealth] (a.east) to[bend right] (0.3,1.7);
		
		 \draw (4.8,-0.5) node[draw,fill=white,rounded corners] (a) {$y_j(t)$};
\draw[line width=1pt,color=red,-stealth] (a.north) to[bend right] (4.7,0.7);
\draw[line width=1pt,color=red,-stealth] (a.north) to[bend left] (4.3,1.7);
	
				\node () at (0.3,2.5) {\small{$\sigma_1$}};
				
				\node () at (1.7,2.5) {\small{$\sigma_2$}};
		
				\node () at (7.7,0.7) {\small{$\sigma_{2N}$}};
				
				\draw[thick] (0,0) -- (2,2);
				\node () at (0.4,0.7) {\small{$\sigma_1$}};
				
				\draw[thick] (2,0) -- (0,2);
				\draw[thick] (2,0) -- (4,2);
				\node () at (1.6,0.7) {\small{$\sigma_2$}};
				
				\draw[thick] (4,0) -- (2,2);
				\draw[thick] (6,0) -- (4,2);
						
				\draw[thick] (4,0) -- (6,2);
				\draw[thick] (6,0) -- (8,2);
			
				\draw[thick] (8,0) -- (6,2);
				\node () at (7.7,2.5) {\small{$\sigma_{2N}$}};
						
				\draw[thick] (2,2) -- (2.4,2.4);
				
				\draw[thick] (0,2) -- (0.4,2.4);
				\draw[thick] (2,2) -- (1.6,2.4);
				
				\draw[thick] (4,2) -- (3.6,2.4);
				\draw[thick] (4,2) -- (4.4,2.4);
				
				\draw[thick] (6,2) -- (5.6,2.4);
				\draw[thick] (6,2) -- (6.4,2.4);
				
				\draw[thick] (8,2) -- (7.6,2.4);
				
				\draw[<->] (8.2,0) -- (8.2,2);
				\node () at (8.5,1) {\small{$\DT$}};				
\end{tikzpicture}
\caption{Illustration of the polygonals $y_j(t)$ and of the wave strengths $\sigma_j(t)$
}\label{fig:illustration-sigmaj}
\end{figure}
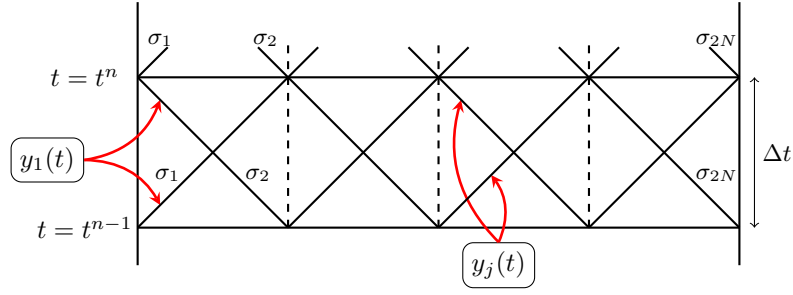			
Now observe that  the function
$$
t\mapsto \int_I \rho^\DX(x,t)\,dx \,;
$$
is continuous and piecewise linear on $\R_+$, and that its derivative is given by
\begin{align}
\frac d{dt}  \int_I \rho^\DX(x,t)\,dx &=  - \sum_{j=1}^{2N} {\Delta \rho}(y_j) \dot y_j \nonumber \\
& =  - \sum_{j=1}^{2N} \Delta J(y_j(t)) = - J(1-,t) + J(0+,t)= 0  \label{eq:sigma-cdot-e}
\end{align}
for every $t\not = t^n$, $t^{n+1/2}$, where we used \eqref{RH-cond-1} and the boundary conditions $J(1-,t) = J(0+,t)=0$, 
which are satisfied exactly for every $t\not= t^n$. Hence \eqref{eq:deriv-int-rho-0} is proved.


Finally, the inequality \eqref{approximate-cons-mass} follows from \eqref{eq:deriv-int-rho-0}, \eqref{int-rho-DX} and 
recalling that $\rho=f^++f^-$. The proof is complete.
\end{proof}

\subsection{Uniform bounds on the Total Variation}\label{subsec:3.3}
We define 
\begin{align}\label{L-pm}
L_{\pm}(t)&=\sum_{(\pm1)-waves} |\Delta f^\pm|\,, \\ \label{L-0}
L_0(t) &= \frac 12 \left(\sum_{0-waves}  |\Delta f^+|+ |\Delta f^-|  \right)
\end{align}
that by \eqref{def-sizes} are related to $\rho$ and $J$ as 
$$
L_{\pm}(t)=  \tv J(\cdot,t)\,, \qquad \qquad L_{\pm}(t) + L_0(t) =  \tv \rho(\cdot,t)\,.
$$
As in the case of the Cauchy problem \cite{AG-MCOM16} and as in \cite{A-A-DS2018}, the functional $L_\pm(t)$ 
may change only at the times $t^n$, due to the interactions with the $(\pm1)-waves$ with the $0-waves$.
Let evaluate the total possible increase of $L_{\pm}$. At each time $t^n$,
by using the inequality \eqref{eq:no-decay}, we get 
\begin{equation*}
L_{\pm}(t^n+) 
\le L_{\pm}(t^n-) + 2 C_0 \left|\bar\alpha_{n} - \bar\alpha_{n-1}\right| \sum_{j=1}^{N-1} \delta_j 
\le L_{\pm}(t^n-) + 2 C_0 \left|\bar\alpha_{n} - \bar\alpha_{n-1}\right| \|k\|_{L^1}\,.
\end{equation*}
Summing up the previous inequality, one gets
\begin{equation}\label{eq:bound-on-Lpm}
L_{\pm}(t^n+) \le L_{\pm}(0+) +  2 C_0 \tv\{\alpha;[0,t_n]\} \|k\|_{L^1}   \,.
\end{equation}
Hence for every $T>0$ the function $[0,T] \ni t \mapsto L_{\pm}(t)$ is uniformly bounded in $t$ and $\DX$.
%
Moreover one has
\begin{align}
L_{\pm}(0+) \le\, & \tv f^+(\cdot,0)  + \tv f^-(\cdot,0) + |J_0(0+)| +  |J_0(1-)|  
+ 2 C_0 \alpha(0+) \|k\|_{L^1} \,,\label{stima-su-Lpm}
\\[2mm]
L_0(t) \le\, & \|\alpha\|_\infty \sum_j |g(J_*(x_j))| \Delta a(x_j) \le C_0 \|\alpha\|_\infty \|k\|_{L^1}  \,.\nonumber
\end{align}
In conclusion,
\begin{align*}
 \tv f^+(\cdot,t) + \tv f^-(\cdot,t) &=\, L_\pm(t) + 2L_0(t) \\[2mm]
&\leq\, \tv f^+(\cdot,0)  + \tv f^-(\cdot,0)   + |J_0(0+)| +  |J_0(1-)|\\[2mm]
&\qquad          + \, 4 C_0 \left( \|\alpha\|_\infty + \tv\{\alpha;[0,T]\} \right)\, \|k\|_{L^1}    
\end{align*}
and hence the total variation of  $t\mapsto (\rho^\DX,J^\DX)(\cdot,t)$ is uniformly bounded on all finite time intervals $[0,T]$, with $T>0$, 
uniformly in $\DX$. 

\subsection{Strong convergence as \texorpdfstring{$\DX\to 0$}{DX to 0} and proof of Theorem~\ref{theorem:well-posedness}}
\label{subsec:convergence}
In this Subsection we prove Theorem~\ref{theorem:well-posedness}, and we start by proving it for $(\rho_0, J_0)\in BV(I)$. 

In this case, for every $T>0$, a standard application of Helly's theorem
implies that there exists a subsequence $(\DX)_j\to 0$ such that ${f^\pm}^{(\DX)_j}\to f^\pm$ in $L^1_{loc} (0,1)\times (0,\infty)$ and that 
$f^\pm: (0,1)\times (0,\infty)\to \R$ are a weak solution to \eqref{GT}.
In terms of ${\rho}^{\DX}$, ${J}^{\DX}$, the identity
\begin{equation}\label{eq:1eq-weak-form}
\int_0^1\int_{0}^{\infty} \left\{ \rho^{\DX} \partial_t \phi + J^{\DX} \partial_x \phi\right\}\,dxdt=0
\end{equation}
holds for every $\phi\in C^1([0,1]\times (0,T))$ (that is, up to the boundaries of $I$) since $J^{\DX}(0+,t)=0 = J^{\DX}(1-,t)$ for every $t\not = t^n$.
Hence the identity \eqref{eq:1eq-weak-form} is satisfied by the strong limit $(\rho,J)$.
Moreover, by passing to the limit as $(\DX)_j\to 0$ in \eqref{approximate-cons-mass} one obtains that \eqref{eq:int-rho-const} holds, that is
\begin{equation*}
\int_I \rho(x,t)\,dx=0\qquad \forall\, t>0\,.
\end{equation*}
To obtain the stability in $L^1$ with respect to the initial data, one can observe that the coupling in system \eqref{GT} 
is \emph{quasimonotone}, in the sense that the equations  
\begin{equation}\label{eq:diag-system-fpm}
\partial_t f^\pm  \pm  \partial_x f^\pm = \mp G\,,\qquad G(x,t,f^\pm) = {k(x) \alpha(t)} \,g(f^+ - f^-)
\end{equation}
satisfy, thanks to the assumptions \eqref{hyp-on-alpha} and \eqref{hyp-weaker-on-k-and-g},
$$
-\frac{\partial G}{\partial f^+} \le 0\,,\qquad
\frac{\partial G}{\partial f^-} \le 0\,.
$$
By adaptation of the arguments in \cite{HN96} (see , which rely on Kru\v{z}kov techniques, one can prove the following stability estimate. For any pair of initial data $(f_0^-, f_0^+)$ and $(\widetilde f_0^-, \widetilde f_0^+) \in L^\infty(I)$, 
let $f^\pm$, $\widetilde f^\pm$ in $(0,1)\times (0,T)$ be solutions of the  problems with the corresponding initial data, according to Definition~\ref{def:weak-sol}. Then the following inequality holds
\begin{equation}\label{ineq:L1-stability}
\|(f^-,f^+)(\cdot,t)  - (\widetilde f^-, \widetilde f^+)(\cdot,t)\|_{L^1(I)} \le \|(f^-_0,f^+_0) - (\widetilde f^-_0, \widetilde f^+_0)\|_{L^1(I)} \,.
\end{equation}
Therefore the weak solution to \eqref{DWE-rho-J-IBVP}--\eqref{init-boundary-data} is unique on $(0,1)\times (0,T)$ 
and can be prolonged for all times, $t\in \R^+$.

Finally, let $(\rho_0, J_0)\in L^\infty(I)$. Then there exists a sequence $\{(\rho_0, J_0)_n\}_{n\in\N}\subset BV(I)$ such that
$(\rho_0, J_0)_n\to (\rho_0, J_0)\in L^1(I)$.  By the $L^1$ stability estimate \eqref{ineq:L1-stability}, the limit in $L^1$ of $f^\pm_n(\cdot,t)$
is well defined and hence also for $(\rho,J)(\cdot,t)$. Since the identity
\begin{equation}\label{eq:1eq-weak-form-BV}
\int_0^1\int_{0}^{\infty} \left\{ \rho_n \partial_t \phi + J_n \partial_x \phi\right\}\,dxdt=0
\end{equation}
holds for every $\phi\in C^1([0,1]\times (0,\infty))$ and for every $n$, then \eqref{eq:1eq-weak-form-BV} is valid also for the strong limit $(\rho,J)$, 
as well as \eqref{eq:int-rho-const}. This completes the proof of Theorem~\ref{theorem:well-posedness}\,.\qed

\begin{rmk} We add more comments about the stability estimate \eqref{ineq:L1-stability}. Due to the quasimonotonicity properties stated above, 
its proof is similar to the one of \cite[Th. 4.1]{HN96}, that was stated for the Cauchy problem of a related system. The presence of the boundary conditions does not provide additional difficulty; let's give a formal argument in support of that. 

From \eqref{eq:diag-system-fpm} and
\begin{equation*}
\partial_t (\widetilde f^\pm)  \pm  \partial_x (\widetilde f^\pm) = \mp {k(x) \alpha(t)} \,g\left(\widetilde f^+ - \widetilde f^-\right)\,,
\end{equation*}
one obtains formally that
\begin{equation*}
    \partial_t \left| f^- - \widetilde f^-\right|  - 
    \partial_x \left| f^- - \widetilde f^-\right| 
    = {k(x) \alpha(t)} \left[g\left(f^+ - f^-\right) - g\left(\widetilde f^+ - \widetilde f^-\right)\right]\cdot \sign\left(f^- - \widetilde f^- \right)\,,
\end{equation*}
as well as
\begin{equation*}
    \partial_t \left| f^+ - \widetilde f^+\right|  +
    \partial_x \left| f^+ - \widetilde f^+\right| 
    = - {k(x) \alpha(t)} \left[g\left(f^+ - f^-\right) - g\left(\widetilde f^+ - \widetilde f^-\right)\right]\cdot \sign\left(f^+ - \widetilde f^+\right)\,.
\end{equation*}
The boundary condition $J(0,t)=0$ translates into 
\begin{equation}\label{bc-for-fpm}
f^+(0,t) = f^-(0,t)\,\qquad     f^+(1,t) = f^-(1,t)
\end{equation}
and similarly 
for $\widetilde f^\pm$. Therefore, after integration in $dx$ over $(0,1)$, the boundary contributions at $x=0$, $x=1$
$$
\left| f^+ - \widetilde f^+\right| - \left| f^- - \widetilde f^-\right| 
$$
vanish while the contribution of the damping term is $\le0$ because of the quasimonotonicity, which relies on the elementary inequality $(a-b)(\sign (a) - \sign (b))\ge 0$ for all $a$, $b\in\R$.

A similar approach was also employed in \cite[Sect.~4.1]{AG-AnIHP16} to provide $L^1$ error estimates for the approximation of the Cauchy problem for \eqref{eq:diag-system-fpm}, in the time-indepedent case.
\end{rmk}

\begin{rmk}
It is possible to introduce the concept of {\rm broad solutions} for the problem \eqref{DWE-rho-J-IBVP}--\eqref{init-boundary-data}, 
by an adaptation of the definition for the Cauchy problem \cite[Sect.3]{Bressan_Book}. 
Indeed, the characteristics can be prolonged for all times by reflection at the boundaries, together with boundary conditions 
\eqref{bc-for-fpm}. The fact that $g$ is only locally Lipschitz continuous in the state variables can be balanced by the presence of the invariant domain, which yields an apriori bound on the solution and hence to the global in time existence of a broad solution.

We expect that the two concepts of solutions coincide in the present setting, that is for $L^\infty$ initial data, especially in view of the uniqueness condition stated in Theorem~\ref{theorem:well-posedness}\,.
\end{rmk}

\section{A finite-dimensional representation of the approximate solutions} 
\label{sec:iter-matrix-discrete}
\setcounter{equation}{0}
In this section we will study the evolution in time of the approximate solution, established in Subsection~\ref{subsec:approximate}, 
by means of a finite-dimensional evolution system of size $2N=2 \DX^{-1}$. 

We remind that the approximate solutions are constructed for the initial--boundary value problem 
\eqref{DWE-rho-J-a}--\eqref{init-boundary-data} with $(\rho_0, J_0)\in BV(I)$ and $\int_I \rho_0(x)\,dx=0$\,.

\subsection{The transition matrix}
Let's introduce a vector representation of the approximate solution that will be the basis of our subsequent analysis.
Define
\begin{equation*}
\T = \{t\ge0: \ t=t^n= n\DT\mbox{ or }t=t^{n+\frac 12}= \left(n+ \frac 12\right)\DT\,,\quad n=0, 1, \ldots \}
\end{equation*}
the set of possible interaction times. At every time $t\not\in \T$, we introduce the vector of the sizes
\begin{equation}\label{def:ssigma}
\ssigma(t)=\left(\sigma_1,\ldots,\sigma_{2N}\right)(t) \in \R^{2N}\,,\qquad N\in2\N
\end{equation}
where, recalling \eqref{def-sizes} and the notation in Proposition~\ref{prop:cons-of-mass}, especially \eqref{def:y-j} and \eqref{RH-cond-1}, one has
\begin{equation}\label{eq:prop-of-sigma-j}
\sigma_j ~\dot = ~\Delta J(y_j) = \Delta \rho(y_j) \dot y_j \,.
\end{equation}
Let's examine its evolution in the following steps.

\begin{itemize}
\item[(1)] At time $t=0+$, $\ssigma(0+)$ is given by the size of the waves that arise at $x_j=j\DX$, with $j=0,\ldots, N$. In particular, 
a (+1) wave arises at $x=0$, two $(\pm1)$ waves arise at each $x_j$ with $j=1,\ldots, N-1$ and finally a  (-1) wave arises at $x=1$.

\medskip

\item[(2)] At every time $t^{n+\frac 12}$, $n\ge0$,
the vector $\ssigma(t)$ evolves by exchanging positions of each pair $\sigma_{2j-1}$, 
$\sigma_{2j}$:
\begin{equation}\label{eq:interaction-B1}
\left(\sigma_{2j-1},\sigma_{2j}\right) \mapsto \left(\sigma_{2j},\sigma_{2j-1}\right)\qquad j=1,\ldots,N
\end{equation}
that results into
\begin{equation}\label{B1}
\ssigma(t+) = B_1 \ssigma(t-)\,,\quad 
B_1\doteq \begin{bmatrix}
0&1&0&\cdots&0&0\\
1&0&0&\cdots&0&0\\
\vdots&\vdots& \ddots & &\vdots &\vdots \\
\vdots&\vdots& &\ddots &\vdots &\vdots \\
0&0&0&\cdots&0&1\\
0&0&0&\cdots&1&0
\end{bmatrix}
\end{equation}

\medskip

\item[(3)] At each time $t^n=n\DT$, $n\ge 1$, the interactions with the Dirac masses at each $x_j$ of the source term occur, 
and we have to take into account the relations introduced in Proposition~\ref{prop:multiple}. 
We will rely on the identity \eqref{mult-inter-matrix-form-gammaPLUS}.%

\smallskip
For each $j=1,\ldots,N-1$, define the \emph{transition coefficients} $\gamma^n_j$ as follows:
\begin{equation}\label{def:c_j}
\gamma^n_j= g'(s_j^n) \delta_j  \bar \alpha_{n}\,,\qquad  j=1,\ldots,N-1\,,\quad n\ge 1, 
\end{equation}
where $\delta_j$ is given in \eqref{delta-j}, that is
$$
\delta_j = \int_{x_{j-1}}^{x_{j}} k(x) dx\,, \qquad j=1,\ldots, N-1\,,
$$
$\bar \alpha_n$ in \eqref{def:alpha_n} and $s_j^n$ satisfies a relation 
as in \eqref{property-of-s}; more precisely
\begin{equation*}
g'(s_j^n) = \frac{g\left(J(x,t^n+) \right)  - g\left(J(x,t^n-) \right) }{J(x,t^n+)  - J(x,t^n-) }\,.
\end{equation*}
Moreover introduce the terms
\begin{align}\label{def:local-sources}
p_{j,n} &= g\left(J(x_{j},t^{n}-) \right) \frac{\delta_j}{1+\gamma^n_j}\,,\qquad j=1,\ldots,N-1\,,\quad n\ge 1 \,.
\end{align}
Then, the local interaction is described as follows:
\begin{align}\label{eq:interaction-B2}
\boxed{
\begin{pmatrix}\sigma_{2j}\\ \sigma_{2j+1}\end{pmatrix} \mapsto 
\frac{1}{1+\gamma^n_j}\begin{pmatrix} \gamma^n_j\sigma_{2j} + \sigma_{2j+1}\\ \sigma_{2j} + \gamma^n_j\sigma_{2j+1} \end{pmatrix}
+  \left( \bar \alpha_n - \bar \alpha_{n-1}\right) p_{j,n} \begin{pmatrix}-1 \\ +1 \end{pmatrix}\,.
}
\end{align}
\medskip
\noindent
To recast it in a global matrix form, we define
\begin{equation}\label{def:cc}
\gg^n= \left(\gamma^n_1,\ldots,\gamma^n_{N-1}\right)\in\R^{N-1}  
\end{equation}
and set 
\setlength{\fboxsep}{5pt}
\begin{equation} \label{B2}
B_2(\gg^n) =  \begin{bmatrix}
   ~ 1                                    \\[1mm]
      &  \framebox[25pt][c]{$\hat A^n_{1}$}         &   & \text{\LARGE 0}
      \\
      &               & \ddots               \\
      & \text{\LARGE0} &   &  \framebox[30pt][c]{$ \hat A^n_{N-1} $}        \\
      &               &   &   & 1~
    \end{bmatrix}\,,\qquad   
\hat A^n_{j} = \frac{1}{1+\gamma^n_j}\begin{bmatrix} \gamma^n_j & 1\\
1& \gamma^n_j 
\end{bmatrix}\,.
\end{equation}
\smallskip\par\noindent
The matrix $B_2(\gg)$ is tridiagonal 
with diagonal components as follows,
\begin{equation*}
\left(1, \frac{\gamma^n_1}{1+\gamma^n_1}, \frac{\gamma^n_1}{1+\gamma^n_1}, \frac{\gamma^n_2}{1+\gamma^n_2},\ldots, 
\frac{\gamma^n_{N-2}}{1+\gamma^n_{N-2}},\frac{\gamma^n_{N-1}}{1+\gamma^n_{N-1}}, \frac{\gamma^n_{N-1}}{1+\gamma^n_{N-1}},1\right) \in \R^{2N}
\end{equation*}
and subdiagonals
\begin{equation*}
\left(0, \frac{1}{1+\gamma^n_1}, 0,\frac{1}{1+\gamma^n_2},0,
\ldots,\frac{1}{1+\gamma^n_{N-1}},0\right) \in \R^{2N-1}\,.
\end{equation*}
Hence $\ssigma(t)$ evolves according to
\begin{align*}
\ssigma(t^n+) = B_2(\gg^n) \ssigma(t^n-) +  \left( \bar \alpha_n - \bar \alpha_{n-1}\right)  \GG_n
\end{align*}
with 
\begin{align} \label{def:G}
\GG_n &=  \left(0, -p_{1,n},  + p_{1,n}, \ldots, -p_{N-1,n}, + p_{N-1,n}, 0\right)^t  \,.
\end{align}
\end{itemize}
We summarize the previous identities to get the following statement.
\begin{proposition}
At time $t=t^n$ let $B_1$, $B_2(\gg^n)$, $\GG_n$ be defined by \eqref{B1}, \eqref{B2},  \eqref{def:G} respectively. Define
\begin{equation}\label{BB}
{B}(\gg^n) :=  B_2(\gg^n)  B_1\,.
\end{equation} 
Then the following relation holds,
\begin{equation}\label{def:iteration}
\boxed{ \ssigma(t^n+) = B(\gg^n) \ssigma(t^{n-1}+) +  \left( \bar \alpha_n - \bar \alpha_{n-1}\right)  \GG_n }\,, \qquad n\ge 1\,.
\end{equation}
\end{proposition}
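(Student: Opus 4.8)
The plan is to derive the one-step iteration \eqref{def:iteration} by composing the two elementary transformations that the size vector $\ssigma$ undergoes on the time strip $[t^{n-1},t^n]$, exploiting that the wave strengths are locally constant between interaction times. The geometric input is the exact CFL synchronization $\DX=\DT$: every $(+1)$--wave emitted at a node $x_{j-1}$ at time $t^{n-1}$ and every $(-1)$--wave emitted at $x_{j+1}$ reach the node $x_j$ precisely at $t^n$, while the oppositely oriented $\pm1$--waves from neighbouring cells cross exactly at the half-time $t^{n-1/2}$.

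First I would show that $t\mapsto\ssigma(t)$ is constant on each of the open intervals $(t^{n-1},t^{n-1/2})$ and $(t^{n-1/2},t^n)$. Away from the nodes the system \eqref{NC-system} decouples into the transport equations $\partial_t f^\pm\pm\partial_x f^\pm=0$, so each $(\pm1)$--wave travels at constant speed $\pm1$ without changing its strength; at $t^{n-1/2}$ two such waves merely cross, which only affects their left-to-right labelling. Hence $\ssigma(t^{n-1/2}-)=\ssigma(t^{n-1}+)$, and the relabelling is the permutation recorded in step~(2), giving $\ssigma(t^{n-1/2}+)=B_1\,\ssigma(t^{n-1}+)$. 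Constancy on $(t^{n-1/2},t^n)$ then propagates this to $\ssigma(t^n-)=B_1\,\ssigma(t^{n-1}+)$.

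Next I would invoke step~(3): at the single time $t^n$ all node interactions occur simultaneously and independently. For each interior node $x_j$ the incoming pair $(\sigma_{2j},\sigma_{2j+1})$---which, after the crossing at $t^{n-1/2}$, are exactly the $(+1)$--wave from $x_{j-1}$ and the $(-1)$--wave from $x_{j+1}$ impinging on $x_j$---is transformed by \eqref{eq:interaction-B2}, the matrix form of the multiple-interaction identity \eqref{mult-inter-matrix-form-gammaPLUS} of Proposition~\ref{prop:multiple} with $\gamma^n_j=g'(s_j^n)\delta_j\bar\alpha_n$ and source contribution $(\bar\alpha_n-\bar\alpha_{n-1})p_{j,n}$. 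Collecting all interior nodes, together with the two boundary reflections---which preserve the wave strength and are encoded by the corner entries $1$ of $B_2$---gives $\ssigma(t^n+)=B_2(\gg^n)\,\ssigma(t^n-)+(\bar\alpha_n-\bar\alpha_{n-1})\GG_n$. Substituting $\ssigma(t^n-)=B_1\,\ssigma(t^{n-1}+)$ and setting $B(\gg^n)=B_2(\gg^n)B_1$ yields \eqref{def:iteration}.

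Since the pieces \eqref{B1}, \eqref{eq:interaction-B2}, \eqref{B2} are already in hand, I do not expect a genuine analytical obstacle; the step requiring care is the consistency of the local picture with the global matrices. Concretely I would verify that at $t^n$ the three-wave interactions at the interior nodes and the two-wave reflections at $x=0,1$ are mutually non-overlapping and exhaust all $2N$ components of $\ssigma$---a consequence of the synchronization $\DX=\DT$---so that the shifted block-diagonal matrix $B_2(\gg^n)$ (whose blocks $\hat A^n_j$ are offset by one index relative to the pairs swapped by $B_1$) represents the entire transformation at $t^n$, and that the indexing matches so that the pair acted on by $\hat A^n_j$ is precisely the pair physically arriving at $x_j$.
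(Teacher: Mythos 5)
Your proposal is correct and follows essentially the same route as the paper: the proposition there is obtained by composing exactly the two elementary steps you describe --- the relabelling $\ssigma(t^{n-1/2}+)=B_1\ssigma(t^{n-1/2}-)$ at the crossing times and the node interactions \eqref{eq:interaction-B2} at $t^n$ coming from Proposition~\ref{prop:multiple} --- using that $\ssigma$ is constant between interaction times, so that $\ssigma(t^n-)=B_1\ssigma(t^{n-1}+)$ and hence $\ssigma(t^n+)=B_2(\gg^n)B_1\ssigma(t^{n-1}+)+(\bar\alpha_n-\bar\alpha_{n-1})\GG_n$. The points you flag for verification (the index offset between the $B_1$ pairs $(\sigma_{2j-1},\sigma_{2j})$ and the $B_2$ blocks acting on $(\sigma_{2j},\sigma_{2j+1})$, and the boundary reflections being the corner entries $1$ of $B_2$) are precisely the bookkeeping the paper relies on, and your treatment of them is accurate.
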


\bigskip
\begin{rmk} We give a couple of remarks about the use of the local interaction estimates \eqref{mult-inter-matrix-form}, 
\eqref{mult-inter-matrix-form-gammaPLUS}.

\begin{enumerate}[(a)]
\item If, in place of  \eqref{mult-inter-matrix-form-gammaPLUS}, the relation  \eqref{mult-inter-matrix-form} is used, 
the quantities \eqref{def:c_j} and \eqref{def:local-sources} are defined by
$$
\gamma^n_j= g'(s_j^n) \delta_j  \bar \alpha_{n-1}\,,\qquad p_{j,n} = g\left(J(x_{j},t^{n}+) \right) \frac{\delta_j}{1+\gamma^n_j}\,.
$$

\item Notice that, in \eqref{eq:interaction-B2}, we consider the \emph{space order} instead of the \emph{family order},
that was used in \eqref{mult-inter-matrix-form}. That is,
$$
(\sigma_{2j},\sigma_{2j+1}) = \begin{cases} (\sigma_{1}^-,\sigma_{-1}^-) & \mbox{ before the interaction }\\[2mm]
(\sigma_{-1}^+,\sigma_{1}^+) & \mbox{ after the interaction}\,.
\end{cases}
$$
\end{enumerate}
\end{rmk}
		
\subsection{Properties of the transition matrix}	
As observed in \cite{A-A-DS2018}, the matrix $B$ in \eqref{BB} is doubly stochastic (that is, it is non-negative and the sum of all the elements by row is $1$, as well as by column) for every vector $\gg$; 
we will call it \emph{transition matrix}. Notice that it is non-negative provided that all the $\gamma^n_j$ (see \eqref{def:c_j})
are non-negative, which relies on the assumption that $g'\ge0$.  Let's summarize some properties: 

\begin{itemize}
\item[(i)] its eigenvalues $\lambda_j$ satisfy $|\lambda_j|\le 1$ for all $j=1,\ldots,2N$;

\item[(ii)] if $\gamma_j \cdot \gamma_{j+1}>0$ for some $j$,  
then the eigenvalues with maximum modulus are exactly two ($\lambda=\pm 1$) and they are simple. 

\item[(iii)] The values $\lambda=\pm 1$ are eigenvalues with corresponding (left and right) eigenvectors
\begin{align}\label{v-pm}
\begin{aligned}
\lambda_-= -1\,,\qquad & v_{-}=(1,-1,-1,1,\ldots,1,-1,-1,1)\,,\\
\lambda_+= 1\,,\qquad & e=(1,1,\ldots,1,1)\,.
\end{aligned}
\end{align}
\end{itemize}
Moreover $B(\zero)$ is a normal matrix, since it is a permutation and hence $B(\zero)^t B(\zero)=B(\zero) B(\zero)^t = \Id$.
This property does not hold if $\gg\not =\zero$.

\begin{rmk}\label{rem:properties-of-B}
The properties established in Subsections~\ref{subsec:3.2}--\ref{subsec:3.3} can be rewritten in terms of the vectorial representation 
of the solution \eqref{def:ssigma}, as follows\,. 

\begin{enumerate}[(a)]
\item \emph{(Boundary conditions)}\quad  From equation \eqref{eq:sigma-cdot-e} it follows that 
\begin{equation}\label{sigma-dot-e-is-zero}
\ssigma(t) \cdot e=0 
\end{equation}
for every $t\not \in \T$. 
Indeed, 
\begin{equation*}
 \ssigma(t) \cdot e= \sum_{j=1}^{2N} \sigma_j(t) = \sum_{j=1}^{2N} \Delta J(y_j(t)) = J(1-,t) - J(0+,t)= 0 \,. 
\end{equation*}

\item \emph{(Total variation)}\quad The quantity $L_{\pm}(t)$ coincides with $\|\ssigma (t)\|_{\ell_1}$. In particular, from
\eqref{eq:bound-on-Lpm}--\eqref{stima-su-Lpm} we obtain
\begin{align}
\|\ssigma (0+)\|_{\ell_1} &\le  \tv f^+(\cdot,0) + \tv f^-(\cdot,0) + |J_0(0+)| +  |J_0(1-)|  
+ 2 C_0 \alpha(0+) \|k\|_{L^1} \,, \label{stima-su-sigma-0-ell1}
\\[2mm] \nonumber
\|\ssigma (t)\|_{\ell_1}  &\le  \|\ssigma (0+)\|_{\ell_1} +  2 C_0 \tv\{\alpha;[0,t_n]\} \|k\|_{L^1},\qquad t^n<t< t^{n+1}\,.
\end{align}


\item The following property holds, 
\begin{equation}\label{eq:sigma-dot-v_-}
\left|\ssigma(t)\cdot v_{-}\right| \le \left| \ssigma(0+)\cdot v_- \right|  \le  \tv \{\bar J_0;[0,1]\} 
 \qquad \forall\, t\not\in\T
\end{equation}
where $v_-$ is the eigenvector corresponding to $\lambda=-1$, see \eqref{v-pm}, and 
\begin{equation*}
\bar J_0(x) = \begin{cases}
J_0(x) & x\in (0,1)\\
0 & x\in 0 \ \mbox{\rm  or }1\,.
\end{cases}
\end{equation*}
Indeed, the second inequality in \eqref{eq:sigma-dot-v_-} follows from \cite[(77)]{A-A-DS2018}. 
To prove the first inequality in \eqref{eq:sigma-dot-v_-}, we first consider $t\in(t^n,t^{n+1/2})$ and use the iteration formula \eqref{def:iteration}
to obtain
\begin{align*}
\ssigma(t)\cdot v_{-} = \ssigma(t^n)\cdot v_{-}=B(\gg^n) \ssigma(t^{n-1}+)\cdot v_- + \GG_n\cdot v_{-}\,.
\end{align*}
By recalling the definition of \eqref{def:G}, we immediately deduce that  
\begin{equation*}
\GG_n\cdot v_{-}=0\qquad \forall\, n\,,
\end{equation*}
and therefore that
\begin{align*}
\ssigma(t)\cdot v_{-} 
&= \ssigma(t^{n-1}+)\cdot B(\gg^n)^t v_- \\
& = - \ssigma(t^{n-1}+)\cdot v_-\\
& = (-1)^n \ssigma(0+)\cdot v_-\,,
\end{align*}
from which \eqref{eq:sigma-dot-v_-} follows for $t\in(t^n,t^{n+1/2})$. Secondly, for $t\in(t^{n+1/2},t^{n+1})$, by using \eqref{B1} we have that
$$
\ssigma(t) = \ssigma(t^{n+1/2}+) = B_1  \ssigma(t^{n+1/2}-) = B_1  \ssigma(t^{n}+)\,,\qquad  t\in(t^{n+1/2},t^{n+1})
$$
and hence
\begin{align*}
\ssigma(t)\cdot v_{-} =  \ssigma(t^{n}+)\cdot B_1 v_{-} = - \ssigma(t^{n}+)\cdot v_{-}
\end{align*}
from which it follows again \eqref{eq:sigma-dot-v_-}\,.

\item The \emph{undamped} equation: $k(x)\equiv 0$.  

In this case, each vector $\GG_n$ vanishes and $\gg^n=\zero$. Therefore from \eqref{def:iteration} and \eqref{eq:interaction-B1} we obtain
\begin{equation*}
\ssigma(t) = \begin{cases} 
B(\zero)^n \ssigma(0+) & t^n<t<t^{n+\frac 12}\\
B_1B(\zero)^n \ssigma(0+) & t^{n+\frac 12}<t<t^{n+1}\,.
\end{cases}
\end{equation*}
Since every wave-front issued at $t=0$ reflects on the two boundaries and gets back to the initial position after a time 
$T=2 = 2N\DT$, it is clear that 
\begin{equation}\label{B-zero-2N}
B(\zero)^{2N}= \Id
\end{equation} 
that is, $B(\zero)^{2N}$ coincides with the identity matrix in $M_{2N}$. As a consequence, the powers of $B(\zero)$ are periodic with period $2N$:
\begin{equation*}
B(\zero)^{n+2N} = B(\zero)^{n}\,,\qquad n\in \Z\,. 
\end{equation*}
With a similar argument one can prove that
\begin{equation}\label{B-zero-N}
(B(\zero)^{N})_{ij}= \begin{cases}1 & \mbox{ if }~ i+j=2N+1\\
0 & \mbox{ otherwise,} \end{cases}
\end{equation} 
that is, $B(\zero)^{N}$ is the matrix with component 1 on the antidiagonal positions $(i, 2N+1-i)$ and 0 otherwise. It is clear that
$(B(\zero)^{N})^2 = B(\zero)^{2N}=\Id$.
\end{enumerate}
\end{rmk}

\subsection{A representation formula for \texorpdfstring{$\rho$ and $J$}{rho and J}}
In this subsection we provide a pointwise representation of $\rho(x,t)$, $J(x,t)$ by means of the vectorial quantity $\ssigma(t)$.
It is based on the key properties \eqref{eq:prop-of-sigma-j} and \eqref{J*_rho*}$_{2}$, that we recall here for convenience: 
for $y_j$ given in \eqref{def:y-j}, 
\begin{equation}\label{eq:remind-disc}
\begin{cases}
\sigma_j=\Delta J(y_j) = \Delta \rho(y_j) \dot y_j & x=y_j(t)\,,\\
\Delta \rho(x_j) = -2\alpha(t) g(J(x_j))\delta_j\,, \quad \Delta J(x_j)=0 &  x=x_j=j\DX
\end{cases}\qquad j=1,\ldots,2N
\end{equation}
Therefore we can reconstruct the functions $x\to \rho(x,t)$ and $x\to J(x,t)$ as stated in the following Proposition.
We define
\begin{equation}\label{def:v-ell}
\vv_{0}=\zero_{2N}\,, \qquad \vv_{\ell}=(\underbrace{1,\cdots,1}_{\ell},0,\cdots,0) \in\R^{2N},\quad  \ell=1,\cdots,2N
\end{equation}
and
\begin{align}
H=\left\{\vv_{\ell}\in\R^{2N}, \quad  \ell=0,\cdots,2N\right\}
\,.\label{set-of-vv}
\end{align}  

\begin{lemma}(\textbf{Representation formula for $\rho$, $J$, $f^\pm$})\label{prop:representation-J-rho}
\par\noindent
For every $(x,t)$ with $x\not = y_j(t)$ and $t\in(t^n,t^{n+1})$, the following holds.
\begin{enumerate}
\item 
There exists $\vv=\vv(x)\in H$ such that
\begin{equation}\label{def:J(x,t)}
J(x,t)  = \ssigma(t)\cdot \vv(x)\,.
\end{equation}
In particular
\begin{equation}\label{rmk:vv}
\vv(x_j)=\vv_{2j}\,,\qquad  j=0,\ldots,N\,.
\end{equation}

\item If moreover $x\not=x_j$, then the following holds:
\begin{equation}
\label{def:rho(x,t)}
\rho(x,t) =  \widetilde\ssigma(t)\cdot  \vv(x) + \rho(0+,t) - 2 \bar \alpha_n\sum_{j:\ x_j < x} g(J(x_j,t)) \delta_j\,, 
\end{equation}
where $ \bar \alpha_n$ is defined in \eqref{def:alpha_n}, 
\begin{equation}\label{def:tilde-ssigma}
\widetilde \ssigma(t)= \pm \Pi \ssigma (t) = \begin{cases} \Pi \ssigma& t\in\left(t^n, t^{n+1/2}\right)\\
- \Pi \ssigma(t) & t\in\left(t^{n+1/2}, t^{n+1}\right)
\end{cases}
\end{equation}
and
\begin{equation}\label{def:Pi}
\Pi={\rm diag} (1,-1,1,-1,\ldots,1,-1)\in M_{2N}\,.
\end{equation}

\smallskip
\item Finally, for  $j=0,\ldots,N-1$ one has that
\begin{equation}\label{eq:representation-fpm}
f^\pm(x_j+,t) = \ssigma(t)\cdot \vv^\pm_{2j} 
+  \frac 12 \rho(0+,t)  
-  \bar \alpha_n \sum_{0\le\ell \le j} g(J(x_\ell,t)) \delta_\ell   
\end{equation}
where
\begin{equation}\label{def:vv-pm}
\begin{aligned}
\vv^+_{2j} &=  \frac 1{2}  \left( \Pi + \Id\right) \vv_{2j} = (\underbrace{1,0,\ldots,1,0}_{2j},0,0,\ldots,0,0)\\
\vv^-_{2j}  &=   \frac 1{2}  \left( \Pi - \Id\right) \vv_{2j} = - (\underbrace{0,1,\ldots,0,1}_{2j},0,0,\ldots,0,0)\,.
\end{aligned}
\end{equation}
\end{enumerate}
\end{lemma}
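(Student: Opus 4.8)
The plan is to reconstruct the three profiles $x\mapsto J(x,t)$, $x\mapsto \rho(x,t)$ and $x\mapsto f^\pm(x,t)$ by summing, starting from the left endpoint $x=0$, the elementary jump relations collected in \eqref{eq:remind-disc}. Throughout I fix $t\in(t^n,t^{n+1})$ with $t\notin\T$, let $\ell(x)=\#\{j:\ y_j(t)<x\}$ be the number of $\pm1$--fronts lying strictly to the left of $x$, and set $\vv(x):=\vv_{\ell(x)}\in H$; this is admissible because the fronts are linearly ordered, $y_1(t)<\cdots<y_{2N}(t)$, so that $\vv(x)$ is one of the vectors in \eqref{set-of-vv}.

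For item (1) I would first use that $J(0+,t)=0$ — the boundary condition, recorded in \eqref{sigma-dot-e-is-zero} — and that $x\mapsto J(x,t)$ jumps \emph{only} across $\pm1$--waves, since $\Delta J(x_j)=0$ at each $0$--wave by \eqref{eq:remind-disc}. Adding up the jumps $\sigma_j=\Delta J(y_j)$ met between $0$ and $x$ then gives $J(x,t)=\sum_{j:\,y_j(t)<x}\sigma_j=\ssigma(t)\cdot\vv(x)$, which is \eqref{def:J(x,t)}. The equality $\vv(x_j)=\vv_{2j}$ is a counting fact: at $t=0+$ the fronts lying in $(0,x_j)$ are the $(+1)$--wave born at $x_0$, the two waves born at each of $x_1,\dots,x_{j-1}$, and the left--moving $(-1)$--wave born at $x_j$, hence exactly $1+2(j-1)+1=2j$ of them; I would then argue that this number is invariant on all of $(t^n,t^{n+1})$ because the permutation $B_1$ at the half--times and the interaction $B_2$ at the nodes relabel the fronts so that their spatial order always matches their index.

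For item (2), $x\mapsto \rho(x,t)$ jumps across both wave families. Across a front $y_j$ the relation $\sigma_j=\Delta\rho(y_j)\dot y_j$ gives $\Delta\rho(y_j)=\dot y_j\,\sigma_j$, and the key observation is the alternation of the front speeds $\dot y_j\in\{\pm1\}$: following the construction, on $(t^n,t^{n+1/2})$ the odd (resp.\ even) indices label $(+1)$-- (resp.\ $(-1)$--)fronts, so $\dot y_j=\Pi_{jj}$ and $\Delta\rho(y_j)=(\Pi\ssigma)_j$, while the crossing at $t^{n+1/2}$ exchanges these roles and flips the sign — this is precisely the rule $\widetilde\ssigma=\pm\Pi\ssigma$ of \eqref{def:tilde-ssigma}. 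Across a node $x_\ell$ one has $\Delta\rho(x_\ell)=-2\bar\alpha_n\,g(J(x_\ell))\delta_\ell$. Summing both families of jumps starting from $\rho(0+,t)$ then produces \eqref{def:rho(x,t)}.

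Item (3) would follow by forming $f^\pm=(\rho\pm J)/2$ at $x=x_j+$, where $\vv(x_j+)=\vv_{2j}$: using the symmetry of $\Pi$, the wave terms collapse to $\ssigma\cdot\tfrac12(\Pi\pm\Id)\vv_{2j}=\ssigma\cdot\vv^\pm_{2j}$ on $(t^n,t^{n+1/2})$, which is \eqref{eq:representation-fpm}. I expect the delicate point to be the consistency of this formula across the half--time $t^{n+1/2}$, where $\widetilde\ssigma$ carries the opposite sign while $\ssigma$ itself jumps by $B_1$. The way to settle it is to verify the identity $B_1\vv^{\mp}_{2j}=-\vv^{\pm}_{2j}$: this shows that the $B_1$--jump of $\ssigma$ cancels exactly the sign reversal of $\widetilde\ssigma$, so that $t\mapsto f^\pm(x_j+,t)$ is continuous across $t^{n+1/2}$, in agreement with the geometric fact that no front crosses the node $x_j$ at that instant. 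The remaining sign/index bookkeeping — in particular the front--speed alternation and the invariance of the left--count $\ell(x_j)=2j$ — is the only real work.
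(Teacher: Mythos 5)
Your argument reproduces the paper's proof essentially verbatim: items (1) and (2) are obtained exactly as in the paper by summing the jump relations \eqref{eq:remind-disc} from $x=0+$, using $J(0+,t)=0$, the fact that $J$ does not jump across $0$--waves, and the alternation of the speeds $\dot y_j$ encoded in $\Pi$ and in the sign switch of $\widetilde\ssigma$ at $t^{n+1/2}$; item (3) then follows, as in the paper, by forming $f^\pm=(\rho\pm J)/2$ and using the symmetry of $\tfrac 12(\Pi\pm\Id)$. The only point where you go beyond the paper is the consistency check across $t^{n+1/2}$ via the identity $B_1\vv^\pm_{2j}=-\vv^\mp_{2j}$ (which the paper records only in Remark~\ref{rmk:vv-2}, see \eqref{eq:id-B1-vpm}, and does not invoke in its proof); this is a sound addition.
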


\begin{proof} {\it (1)}\quad 
About \eqref{def:J(x,t)}, it is enough to observe that
\begin{equation*}
J(x,t) = \underbrace{J(0+,t)}_{=0}   + \sum_{y_\ell(t) < x} \Delta J(y_\ell) = \sum_{y_\ell < x} \sigma_\ell(t)\,. 
\end{equation*}
Hence 
$$J(x,t)  =  \ssigma(t)\cdot \vv_{\bar \ell}
$$ 
with $\bar \ell\in \{0,1,\ldots,2N-1\}$ such that
\begin{equation}\label{def:bar-ell}
y_{\bar \ell}<x< y_{\bar \ell+1}\,.
\end{equation}
In particular, if $x_j = j\DX$, then 
\begin{equation*}
J(x_j,t) = \underbrace{J(0+,t)}_{=0}   + \sum_{y_\ell(t) < x_j} \Delta J(y_\ell) = \sum_{\ell=1}^{2j} \sigma_\ell(t) = \ssigma(t)\cdot \vv_{2j}\,.
\end{equation*}
Hence \eqref{rmk:vv} is proved.

\smallskip\noindent
  {\it (2)}\quad To prove \eqref{def:rho(x,t)}, let's write $\rho(x,t)$ for $x\not=x_j$ and $x\not = y_\ell$ as follows:
\begin{align*}
\rho(x,t) &= \rho(0+,t)   + \underbrace{\sum_{y_\ell < x} \Delta \rho(y_\ell,t)}_{(a)} +  \underbrace{\sum_{x_j < x} \Delta \rho(x_j,t)}_{(b)} \,.
\end{align*}
Indeed, differently from $J$, the component $\rho$ varies also along the 0-waves.
About $(a)$, by recalling the first relation in \eqref{eq:remind-disc}, we get 
\begin{align*}
\sum_{y_\ell < x} \Delta \rho(y_\ell,t)=\sum_{y_\ell < x} \sigma_{\ell}\, \dot y_\ell \,.
\end{align*}
Now, notice that (see Figure~\ref{fig:illustration-sigmaj})
$$
\dot y_j(t) = \begin{cases} 1& j \mbox{ odd}\\
-1 &  j \mbox{ even}
\end{cases}\qquad t\in\left(t^n, t^n+\frac{\DT}2\right)
$$
as well as
$$
\dot y_j(t) = \begin{cases} -1& j \mbox{ odd}\\
1 &  j \mbox{ even}
\end{cases}\qquad t\in\left(t^n+\frac{\DT}2, t^{n+1}\right)\,.
$$ 
Therefore $(a)$ is of the form
$$
\sum_{y_\ell < x} \Delta \rho(y_\ell,t) =   \widetilde  \ssigma(t)\cdot \vv_{\bar \ell}\,.
$$

\smallskip\noindent
Concerning $(b)$, since $\Delta \rho(x_j) = -2 g(J(x_j))\delta_j$ we immediately get
$$
\sum_{x_j < x} \Delta \rho(x_j,t)= - 2 \bar \alpha_n\sum_{x_j < x} g(J(x_j,t)) \delta_j\,.
$$
Therefore the proof of \eqref{def:rho(x,t)} is complete.

{\it (3)} \quad Finally, about \eqref{eq:representation-fpm}, we use the relation $f^\pm=\frac{\rho\pm J}2$ to get
\begin{equation*}
f^\pm(x_j+,t) =  \frac{\widetilde\ssigma(t) \pm \ssigma(t)}2 \cdot  \vv(x_j) +  \frac 12 \rho(0+,t)  
-  \bar \alpha_n \sum_{0\le\ell \le j} g(J(x_\ell,t)) \delta_\ell\,.
\end{equation*}
We rewrite the first term as follows,
\begin{align*}
\frac{\widetilde\ssigma(t) \pm \ssigma(t)}2 \cdot  \vv(x_j)  &= \frac12 \left(\Pi\pm I_{2N}\right) \ssigma(t)\cdot  \vv(x_j) \\
&=  \ssigma(t)\cdot \underbrace{\frac12 \left(\Pi\pm I_{2N}\right) \vv_{2j}}_{= \vv^\pm_{2j}}
\end{align*}
where we used \eqref{rmk:vv} and the fact that the matrices $\Pi\pm I_{2N}$, 
\begin{align*}
\frac 1{2}  \left( \Pi + \Id\right) &= {\rm diag} (1,0,1,0,\ldots,1,0)\,,\\
\frac 1{2}  \left( \Pi - \Id\right) &= - {\rm diag} (0,1,0,1,\ldots,0,1)
\end{align*}
are symmetric. The proof of \eqref{eq:representation-fpm} is complete.
\end{proof}

\begin{rmk}\label{rmk:vv-2} Here is a list of remarks about the representation formulas in Lemma~\ref{prop:representation-J-rho}.

\begin{enumerate}[(a)]

\item The value of $\rho(0+,t)$ in \eqref{def:rho(x,t)} is determined by the conservation of mass identity: 
$$\int_I \rho^\DX(x,t)\,dx =  \int_I \rho^\DX(x,0)\,dx\,.$$

\medskip
\item By the definitions \eqref{def:vv-pm}, \eqref{B1} of $\vv^+_{2j}$ and $B_1$, respectively, it is immediate to find that
\begin{equation}\label{eq:id-B1-vpm}
B_1 \vv^\pm_{2j}  = - \vv^\mp_{2j} \,.
\end{equation}

\item The last term in \eqref{eq:representation-fpm}, which is related to the variation of $f^\pm$ across the point sources $x_j$, 
can be also conveniently expressed as a scalar product with $\vv^\pm_{2j}$. Indeed, if we define
\begin{align*}
\widehat p_{j}(t) &= g(J(x_j,t)) \delta_j \\
\widehat{\GG}(t) &= \left( 0, - \widehat p_{1}, \widehat p_{1}, \ldots, -\widehat p_{N-1},  \widehat p_{N-1},0 \right)^t 
\end{align*}
\end{enumerate}
then it is immediate to verify the following identity holds:
\begin{equation}\label{vector-source-term}
\sum_{0\le\ell \le j} g(J(x_\ell,t)) \delta_\ell = \widehat{\GG}(t) \cdot \vv^-_{2j} = \widehat{\GG}(t) \cdot \vv^+_{2j+2} \,.
\end{equation}
Notice the similarity between $\widehat{\GG}$, for time $t=t^n-$, and the \emph{vector source term} 
$\GG_n$ defined at \eqref{def:G}. In general, the map $t\mapsto \widehat{\GG}(t)$ is nonlinear with respect to $\ssigma(t)$ 
because of the nonlinearity of $J\mapsto g(J)$. In the following section, we will analyze in detail the case of $g$ being linear.
\end{rmk}

\section{The linear case: the telegrapher's equation}\label{sec:linear-case}
\setcounter{equation}{0}

In this section we assume that, for some $d>0$,
\begin{equation*}
k(x)\equiv d
\,,\qquad  g'(J)\equiv 1
\,,\qquad \alpha(t)\equiv  1 
\end{equation*} 
which corresponds to the case of the standard telegrapher's equation:
\begin{equation}
\begin{cases}
\partial_t\rho +  \partial_x J  = 0, &\\
\partial_t J  +  \partial_x \rho = - 2 d J\,. &
\end{cases} \label{eq:telegrapher}
\end{equation}
Let's summarize the results of Section~\ref{sec:iter-matrix-discrete} in the present context.

\smallskip
$\bullet$ The vector $\gg$, defined at \eqref{def:cc}, has all equal components:
\begin{equation}
\begin{aligned}
&\gamma = d\DX = \frac {d}N\,, \\[1mm]
&  \gg=\gamma (1,\ldots,1)\,.
\end{aligned} \label{eq:gamma-costant}
\end{equation}
Hence the iteration formula \eqref{def:iteration} leads to 
\begin{equation}\label{def:iteration-d}
\ssigma(t^{n}+)=B(\gg)^{n}\ssigma(0+)\,.
\end{equation}
For $d=0$ and hence $\gg=\zero$, it is clear that the sequence in \eqref{def:iteration-d} corresponds to the undamped linear system
\begin{equation*}
\partial_t\rho +  \partial_x J  = 0 = \partial_t J  +  \partial_x \rho\,,
\end{equation*}
see {$(d)$} in Remark~\ref{rem:properties-of-B}.  

\smallskip
$\bullet$ The representation formula \eqref{eq:representation-fpm} for $x=x_j\pm$, here, reads as:
\begin{equation}\label{eq:represent-fpm-linear}
\begin{aligned}
f^\pm(x_j+,t) &= \ssigma(t) \cdot  \vv^\pm_{2j} +   \frac 12 \rho(0+,t) 
-   \frac d N \sum_{0\le \ell \le j } J(x_\ell,t) \,,\qquad j=0,\ldots,N-1\\
f^\pm(x_j-,t) &= \ssigma(t) \cdot  \vv^\pm_{2j} +   \frac 12 \rho(0+,t) 
-   \frac d N \sum_{0\le \ell < j } J(x_\ell,t) \,,\qquad j=1,\ldots,N
\end{aligned}
\end{equation}
where $x_j = j\DX = \frac jN$ and $\vv^\pm_{2j}$  are defined at \eqref{def:vv-pm}.

The plan of this section is the following. First we set the ground to study the long time behavior of \eqref{def:iteration-d}, 
through the expansion formula established in Theorem~\ref{theo:exp-formula}, Subsection~\ref{subsec:exp-formula}.  
Then we prove two contractivity properties for \eqref{def:iteration-d}:

- in Subsection~\ref{subsec:sum-norm} we analyze the matrix norm induced the $\ell_1$--norm and improve a statement already given in \cite{A-A-DS2018}; 

- while in Subsection~\ref{subsec:contractivity-inv-dom}
 we address the contractivity of the invariant domain $[m,M]$ for the state variables $f\pm$, stated in Theorem~\ref{th:Linfty-d}.

\smallskip We remark that the contractivity property established in Subsection~\ref{subsec:contractivity-inv-dom} would yield a decay property for the $BV$ norm of the solution, 
as obtained in \cite{A-A-DS2018}; however this would not be sufficient to reach an analogous property for the $L^\infty$ norm. This is our main motivation in pursuing the result 
of Theorem~\ref{th:Linfty-d}.   

\subsection{An expansion formula}\label{subsec:exp-formula} 
In this subsection we provide an expansion formula for \eqref{def:iteration-d} 
for the power $n=N$, that corresponds to the time $t=1$. The expansion is made in terms of the parameter 
$\gamma = \frac {d}N$, with $d>0$ and $N\to\infty$.

With $\gg$ as in \eqref{eq:gamma-costant}, the matrix $B(\gg)$ can be decomposed as the convex combination of two matrices, see also \cite[p.  185, Proposition 5]{A-A-DS2018}:
\begin{equation}\label{decompose-B}
		B(\gg)=\frac{1}{1+\gamma}\left( B(\zero)+ \gamma B_1\right)\,.
\end{equation}
Thanks to this decomposition, we can analyze the powers of $B(\gg)$. For a generic $n\in \N$ one has 
\begin{align} \label{eq:expansion}
B(\gg)^{n} & =(1+\gamma)^{-n}\left[B(\zero)+ \gamma B_1 \right]^{n} \,,\qquad n\ge 1\,.
\end{align}
The factor $(1+\gamma)^{-n}$ provides an exponentially decreasing term with respect to time. 
Indeed let $T>0$ and recalling that $\DT=N^{-1}$, we have
\begin{equation}\label{eq:time-convergence}
\left(1+ \frac {d}N \right)^{-[TN]} \to \ee^{-dT}\qquad N\to\infty \,.
\end{equation}
Let us focus on the second factor in \eqref{eq:expansion}, that is $\left[B(\zero)+ \gamma B_1 \right]^{n}$. In \cite[Theorem 10]{A-A-DS2018}
an expansion formula is provided in terms of $d$ and $N$  for the power $n=2N$. The following theorem 
states a similar expansion for the power $n=N$, 
which turns out to be a more convenient choice.
\begin{theorem}\label{theo:exp-formula}
Let $N\in 2\N$ and $d\ge 0$. Then the following identity holds
\begin{align}\label{eq:n_0-N}
\left[B(\zero)+ \frac d N B_1 \right]^{N} &=  B(\zero)^{N} + d\widehat{P} + R_{N}(d)
\end{align} 
where 
\begin{align} \label{def:hat_P}
\widehat P  & = \frac {1}{2N} \left(e^t e  + v_-^t v_-  \right)\,,\\[2mm]
\label{eq:R_n}
R_{N}(d) & = \sum_{j=0}^{N-1}   \zeta_{j,N} B_1 B(\zero)^{N-2j-1} 
 + \sum_{j=1}^{N-1}  \eta_{j,N} B(\zero)^{2j-N} \,.
\end{align} 
The coefficients $\zeta_{j,N}$ and $\eta_{j,N}$ depend on $d$ and satisfy the following estimate:
\begin{align}\label{stima-su-zeta-eta_jN}
0\le \sum_{j=0}^{N} \zeta_{j,N}  + \sum_{j=1}^{N} \eta_{j,N} 
&\le \ee^{d}  - d -1+ {\frac {K} N} 
\end{align}
where $K=K(d)\ge 0$ is independent on $N$, and $K(d) \to0$ as $d\to 0$.
\end{theorem}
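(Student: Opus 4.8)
The plan is to work with the normalized matrix $B(\zero)+\gamma B_1$ — recall that \eqref{decompose-B} gives $B(\gg)=(1+\gamma)^{-1}(B(\zero)+\gamma B_1)$ — expand its $N$-th power as a non-commutative binomial, and reduce every monomial to a normal form. First I would record the algebraic relations that make this possible: $B(\zero)^{2N}=\Id$ (this is \eqref{B-zero-2N}), $B_1^2=\Id$, the dihedral relation $B_1 B(\zero)B_1=B(\zero)^{-1}$ (equivalently $B_1 B(\zero)^{g}=B(\zero)^{-g}B_1$), together with the spectral relations $B(\zero)e^t=e^t=B_1e^t$ and $B(\zero)v_-^t=-v_-^t=B_1v_-^t$, which follow from \eqref{v-pm} and the explicit forms of $B(\zero)=B_2(\zero)B_1$ and $B_1$. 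The dihedral relation and the fact that $B(\zero)$ is a single $2N$-cycle — so that its eigenvalues are the simple $2N$-th roots of unity, with $e$ and $v_-$ spanning the $\pm1$ eigenspaces — are exactly the structural facts I would verify from the permutation description of $B(\zero)$.

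Next I would expand
\[
\left[B(\zero)+\gamma B_1\right]^N=\sum_{m=0}^N\gamma^m S_m,\qquad S_m=\sum_{\substack{g_0+\cdots+g_m=N-m\\ g_i\ge0}}B(\zero)^{g_0}B_1 B(\zero)^{g_1}\cdots B_1B(\zero)^{g_m},
\]
the inner sum running over the words of length $N$ that contain exactly $m$ factors $B_1$. Pushing every $B_1$ to the right by means of $B_1B(\zero)^{g}=B(\zero)^{-g}B_1$ collapses each word to $B(\zero)^{E}B_1^{m}$ with $E=\sum_i(-1)^i g_i$ and $|E|\le N-m$. Thus $S_0=B(\zero)^N$; for even $m$ one obtains a non-negative combination of even powers $B(\zero)^{E}$, and for odd $m$ a non-negative combination of $B(\zero)^{E}B_1=B_1B(\zero)^{-E}$ with $E$ odd. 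In particular the terms with $m\ge2$ carry $|E|\le N-2$, hence are supported precisely on $\{B(\zero)^{2j-N}:j=1,\dots,N-1\}$ and $\{B_1B(\zero)^{N-2j-1}:j=0,\dots,N-1\}$; this is the source of the claimed shape of $R_N(d)$, and the coefficients are composition counts $N_m(E)\ge0$ times $\gamma^m$, so that $\zeta_{j,N},\eta_{j,N}\ge0$.

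The heart of the argument is to identify the single term $m=1$ with $d\widehat P$. Here $S_1=\sum_{j=0}^{N-1}B_1B(\zero)^{N-2j-1}$, and the exponents $N-2j-1$ sweep out all odd residues modulo $2N$, so $S_1=B_1\sum_{s\ \mathrm{odd}}B(\zero)^s$. Using the spectral identities $\sum_{s}B(\zero)^s=e^te$ and $\sum_s(-1)^sB(\zero)^s=v_-^tv_-$ (both immediate from the $2N$-cycle structure) I would write $\sum_{s\ \mathrm{odd}}B(\zero)^s=\tfrac12(e^te-v_-^tv_-)$, and then apply $B_1$ together with $B_1e^t=e^t$, $B_1v_-^t=-v_-^t$ to get $S_1=\tfrac12(e^te+v_-^tv_-)=N\widehat P$ (recall \eqref{def:hat_P}). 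Since $d=\gamma N$, this yields $\gamma S_1=d\widehat P$ exactly, whence $\left[B(\zero)+\tfrac dN B_1\right]^N=B(\zero)^N+d\widehat P+\sum_{m=2}^N\gamma^m S_m$, which is \eqref{eq:n_0-N}–\eqref{eq:R_n}.

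Finally, for the estimate \eqref{stima-su-zeta-eta_jN} I would sum the remainder coefficients. Since $\sum_E N_m(E)=\binom{N}{m}$ (the total number of compositions of $N-m$ into $m+1$ non-negative parts), one finds
\[
\sum_{j}\zeta_{j,N}+\sum_{j}\eta_{j,N}=\sum_{m=2}^N\binom{N}{m}\gamma^m=\Big(1+\tfrac dN\Big)^N-1-d,
\]
a value that may also be read off by applying \eqref{eq:n_0-N} to $e^t$ and using $B(\zero)e^t=B_1e^t=e^t$ and $\widehat Pe^t=e^t$. The lower bound $0$ is immediate from the binomial theorem, and $\big(1+\tfrac dN\big)^N\le \ee^{d}$ already yields the upper bound $\ee^{d}-d-1$; the sharper writing with $K(d)/N$ and $K(d)\to0$ as $d\to0$ follows from $\big(1+\tfrac dN\big)^N=\ee^{d}\big(1-\tfrac{d^2}{2N}+O(N^{-2})\big)$. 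The main obstacle is the algebraic identity $\gamma S_1=d\widehat P$: it rests on the dihedral relation and on $B(\zero)$ being a single $2N$-cycle, and once these are in hand the bound $|E|\le N-2$ for $m\ge2$ automatically confines $R_N(d)$ to the powers $B(\zero)^{2j-N}$, $j=1,\dots,N-1$, away from $B(\zero)^{\pm N}$ — which is what makes the choice $n=N$ convenient and is the analogue of \cite[Theorem 10]{A-A-DS2018}.
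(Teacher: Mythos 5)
Your proposal is correct, and for the structural identity \eqref{eq:n_0-N} it follows essentially the same route as the paper: the non-commutative binomial expansion of $\left[B(\zero)+\gamma B_1\right]^N$, the commutation rule $B_1B(\zero)^{\ell}=B(\zero)^{-\ell}B_1$ together with $B_1^2=\Id$ to put each word in normal form, and the identification of the first-order term $\gamma S_1$ with $d\widehat P$. (The paper reaches $S_1=N\widehat P$ by quoting $\frac1N\sum_{j=0}^{N-1}B(\zero)^{2j}=\widehat P$ from \cite{A-A-DS2018} together with $\widehat P B_2(\zero)=\widehat P$; your character-sum derivation $\sum_{s\ \mathrm{odd}}B(\zero)^s=\tfrac12(e^te-v_-^tv_-)$ followed by $B_1e^t=e^t$, $B_1v_-^t=-v_-^t$ is the same fact repackaged, and, as you note, it rests on $B(\zero)$ being a single $2N$-cycle — the one structural input that must be checked from the permutation description.) Where you genuinely diverge is in the estimate \eqref{stima-su-zeta-eta_jN}. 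The paper computes the coefficients explicitly as $\zeta_{j,N}=\sum_\ell\gamma^{2\ell+1}\binom{j}{\ell}\binom{N-j-1}{\ell}$ and $\eta_{j,N}=\sum_i\gamma^{2i}\binom{j}{i}\binom{N-j-1}{i-1}$, bounds the binomials by $n^k/k!$, and recognizes the sums over $j$ as Riemann sums of Beta integrals, arriving at $\sinh d-d+\cosh d-1+K(d)/N$ with $K$ expressed through modified Bessel functions. You instead sum the total weight of all words containing $m\ge 2$ factors $B_1$, namely $\sum_{m=2}^{N}\binom{N}{m}\gamma^m=(1+d/N)^N-1-d\le \ee^{d}-d-1$ (equivalently, by the Vandermonde convolution, $\sum_j\binom{j}{\ell}\binom{N-j-1}{\ell}=\binom{N}{2\ell+1}$ and $\sum_j\binom{j}{i}\binom{N-j-1}{i-1}=\binom{N}{2i}$). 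This is shorter, exact, and in fact sharper: it yields the stated bound with $K=0$, so the $K/N$ correction is not even needed; and since only the total sum \eqref{stima-su-zeta-eta_jN} — not the individual $\zeta_{j,N}$, $\eta_{j,N}$ — is used later in the paper (in Proposition~\ref{prop:ell-1norm} and in the estimate of $\AA_3$ in Theorem~\ref{th:Linfty-d}), nothing is lost by your simplification.
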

The proof is deferred to Appendix~\ref{appendix:proof-of-exponential-formula}. For the definition of $K=K(d)$ see \eqref{K-sum-modif-bessel-0}\,.

\smallskip
In the following, the analysis will be based on the equation \eqref{def:iteration-d} for $n=N$. Notice that $t^N=N\DT =1$. By recalling \eqref{eq:expansion} and the expansion formula \eqref{eq:n_0-N}, we get 
\begin{equation} \label{eq:sigma-thN}
\boxed{\begin{aligned}
\ssigma(t^{N}+) &=B(\gg)^{N}\ssigma(0+)\\  
&=  \left(1+\frac d N \right)^{-N} \left(B(\zero)^{N} + d \widehat{P} + R_{N}(d)\right)\ssigma(0+)\,.
\end{aligned}}
\end{equation}
Recalling \eqref{def:tilde-ssigma}, one obtains a similar expression for 
\begin{equation} \label{eq:sigma-thN-tilde}
\widetilde\ssigma(t^{N}+)=\Pi B(\gg)^{N} \ssigma(0+)\,.
\end{equation}
We remind that $\ssigma$ is used in the representation formula for $J$, while $\widetilde \ssigma$ is used in the one for $\rho$.

In the formula \eqref{eq:sigma-thN}, an expansion in powers of $d$ is obtained, since $R_{N}(d)$ can be expressed in terms of powers $d^\ell$ with $\ell\ge 2$. A key point is the identification of the first order term $\widehat{P}$, that will lead us to a cancellation property stated 
in the following proposition. 

\begin{proposition} The following identity holds,
\begin{equation}\label{eq:hatPww-ssigma}
\widehat{P}\ssigma(0+) = \frac{1}{2N}\big(\ssigma(0+)\cdot v_{-}\big) v_{-}\,.
\end{equation}
\end{proposition}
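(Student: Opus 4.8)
The plan is to compute $\widehat{P}\ssigma(0+)$ directly from the definition \eqref{def:hat_P}, namely $\widehat P = \frac{1}{2N}(e^t e + v_-^t v_-)$, and then use the boundary condition \eqref{sigma-dot-e-is-zero} to eliminate the $e$-contribution. First I would observe that $e^t e$ and $v_-^t v_-$ are rank-one matrices: for any vector $w \in \R^{2N}$, the product $(e^t e)\,w$ equals $e^t(e\,w) = (e \cdot w)\,e^t$, that is, it equals the scalar $(e\cdot w)$ times the column vector $e$, and similarly $(v_-^t v_-)\,w = (v_- \cdot w)\,v_-$. Applying this with $w = \ssigma(0+)$ gives
\begin{equation*}
\widehat{P}\ssigma(0+) = \frac{1}{2N}\left[\big(e\cdot\ssigma(0+)\big)\,e + \big(v_-\cdot\ssigma(0+)\big)\,v_-\right]\,.
\end{equation*}

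The key cancellation is then the vanishing of the first term. By Remark~\ref{rem:properties-of-B}(a), specifically the identity \eqref{sigma-dot-e-is-zero}, one has $\ssigma(t)\cdot e = 0$ for every $t\not\in\T$; in particular this holds at $t=0+$, so $e\cdot\ssigma(0+) = 0$. Substituting this into the expression above leaves only the $v_-$-term, which is exactly \eqref{eq:hatPww-ssigma}. This is genuinely the only substantive step, and it is essentially immediate once the rank-one structure is made explicit.

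I expect no real obstacle here: the statement is a short algebraic identity and the proof is essentially a one-line computation combining the explicit rank-one form of $\widehat P$ with the already-established orthogonality $\ssigma(0+)\cdot e = 0$. The only minor care needed is in the bookkeeping of row versus column vectors — writing $e^t e$ as an outer product of the column $e^t$ with the row $e$ (so that it acts on a column vector $\ssigma(0+)$ by first taking the inner product $e\cdot\ssigma(0+)$) — but since the paper already uses $e$ and $v_-$ as both left and right eigenvectors of $B$ in \eqref{v-pm}, this interpretation is consistent with the paper's established conventions and requires no new machinery.
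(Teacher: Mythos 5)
Your proof is correct and follows exactly the paper's own argument: compute $\widehat{P}\ww = \frac{1}{2N}\big((\ww\cdot e)e+(\ww\cdot v_-)v_-\big)$ from the rank-one structure of \eqref{def:hat_P}, then set $\ww=\ssigma(0+)$ and invoke \eqref{sigma-dot-e-is-zero} to cancel the $e$-term. No gaps.
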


\begin{proof} By recalling the definition of $\widehat{P}$ in \eqref{def:hat_P}, one has that
\begin{equation}\label{eq:hatPww}
\widehat{P}\ww = \frac{1}{2N}\big(\left(\ww\cdot e\right)e+\left(\ww\cdot v_{-}\right)v_{-}\big)\qquad \forall\, \ww\in \R^{2N}\,.
\end{equation}
By setting $\ww=\ssigma(0+)$, from \eqref{sigma-dot-e-is-zero} we immediately get \eqref{eq:hatPww-ssigma}.
\end{proof}

\subsection{Contractivity of the "sum" norm}\label{subsec:sum-norm}
Next, for a fixed $T>0$, we seek an estimate on $B(\gg)^n$ as $n=[NT]$ and $N\to\infty$.
In \cite[Proposition 11 and (88)]{A-A-DS2018}, it is proved that the matrix norm induced by $\|\cdot\|_{\ell_1}$ (also called \emph{sum} norm, \cite{Horn-Johnson}) is contractive for $B(\gg)^n$ on the subspace 
\begin{equation}\label{def:E-}
{E_-} \, \dot = \, <e,v_->^\perp
\end{equation}
which is the linear space generated by all the eigenvectors of those eigenvalues $\lambda$ such that $|\lambda|<1$. Here we provide an extension 
of this property, that leads to an estimate for the time $T=1$.

\begin{proposition}\label{prop:ell-1norm}
Let $N\in2\N$ and $d\ge 0$. There exists a constant $C_N(d)$ (see \eqref{eq:Ctilde-hN} below) such that

\smallskip\par\noindent
\begin{equation}\label{def:Chd}
C_N(d)\to (1-d \ee^{-d})\,\dot{=}\,C(d) <1\,,\qquad N\to\infty 
\end{equation}
\smallskip\par\noindent
and that, for all $\ww \in \R^{2N}$,
\begin{equation}\label{eq:norm1}
\bigl\| B(\gg)^{N}\ww \bigr\|_{\ell_1} \le C_N(d) \bigl\|\ww \bigr\|_{\ell_1} + d \left(1+\frac d N\right)^{-N} \left(|\ww\cdot e| + |\ww\cdot v_-| \right)\,.
\end{equation}
In particular, for $N$ large enough such that $C_N(d)<1$, the $\ell_1$--norm is contractive on the subspace $E_-$ defined at \eqref{def:E-}.
\end{proposition}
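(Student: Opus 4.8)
The plan is to read off the statement directly from the matrix-level expansion underlying \eqref{eq:sigma-thN}, namely the identity
\begin{equation*}
B(\gg)^{N}\ww = \left(1+\frac d N\right)^{-N}\left(B(\zero)^{N}\ww + d\,\widehat{P}\ww + R_{N}(d)\ww\right),
\end{equation*}
valid for every $\ww\in\R^{2N}$ since it comes from combining \eqref{eq:expansion} (with $\gamma=d/N$) and the expansion formula \eqref{eq:n_0-N}. I would apply the triangle inequality for the $\ell_1$--norm, estimate the three contributions separately, and collect the common prefactor $(1+d/N)^{-N}$ at the end. The crucial structural fact to exploit throughout is that both $B(\zero)$ and $B_1$ are permutation matrices, hence isometries for $\|\cdot\|_{\ell_1}$, so any product $B_1B(\zero)^{k}$ or $B(\zero)^{k}$ is again an $\ell_1$--isometry.

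For the first term, $B(\zero)^{N}$ is a permutation, so $\|B(\zero)^{N}\ww\|_{\ell_1}=\|\ww\|_{\ell_1}$. For the second term I would use the explicit form \eqref{eq:hatPww}, namely $\widehat{P}\ww=\frac{1}{2N}\big((\ww\cdot e)\,e+(\ww\cdot v_-)\,v_-\big)$, together with $\|e\|_{\ell_1}=\|v_-\|_{\ell_1}=2N$, to obtain
\begin{equation*}
\|\widehat{P}\ww\|_{\ell_1}\le \frac{1}{2N}\big(|\ww\cdot e|\,\|e\|_{\ell_1}+|\ww\cdot v_-|\,\|v_-\|_{\ell_1}\big)=|\ww\cdot e|+|\ww\cdot v_-|,
\end{equation*}
which, after multiplying by $d\,(1+d/N)^{-N}$, produces exactly the boundary-type second term in \eqref{eq:norm1}. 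For the third term I would estimate $\|R_{N}(d)\ww\|_{\ell_1}$ using \eqref{eq:R_n}: each summand is a non-negative coefficient times an $\ell_1$--isometry, so every term contributes $\zeta_{j,N}\|\ww\|_{\ell_1}$ or $\eta_{j,N}\|\ww\|_{\ell_1}$; summing over $j$ and invoking the coefficient bound \eqref{stima-su-zeta-eta_jN} (the indices in $R_N(d)$ run only up to $N-1$, so by non-negativity the relevant sum is dominated by the full sum in \eqref{stima-su-zeta-eta_jN}) yields $\|R_{N}(d)\ww\|_{\ell_1}\le\big(\ee^{d}-d-1+K/N\big)\|\ww\|_{\ell_1}$.

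Assembling the three estimates, the coefficient of $\|\ww\|_{\ell_1}$ is $\big(1+d/N\big)^{-N}\big(1+\ee^{d}-d-1+K/N\big)$, which identifies
\begin{equation*}
C_N(d)=\left(1+\frac d N\right)^{-N}\left(\ee^{d}-d+\frac K N\right).
\end{equation*}
Since $(1+d/N)^{-N}\to\ee^{-d}$ by \eqref{eq:time-convergence} and $K/N\to0$ with $K=K(d)$ fixed, one gets $C_N(d)\to\ee^{-d}(\ee^{d}-d)=1-d\,\ee^{-d}=C(d)$, and $C(d)<1$ for every $d>0$ because $d\,\ee^{-d}>0$; this is \eqref{def:Chd}. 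The contractivity on $E_-$ is then immediate: for $\ww\in E_-=\langle e,v_-\rangle^{\perp}$ both $\ww\cdot e$ and $\ww\cdot v_-$ vanish, the second term in \eqref{eq:norm1} drops, and $\|B(\gg)^{N}\ww\|_{\ell_1}\le C_N(d)\|\ww\|_{\ell_1}$ with $C_N(d)<1$ for $N$ large. The main obstacle is the third step: one must genuinely use the \emph{non-negativity} of the individual coefficients $\zeta_{j,N},\eta_{j,N}$, so that the bound \eqref{stima-su-zeta-eta_jN} on their sum — rather than on the sum of their absolute values — is what controls $\|R_N(d)\ww\|_{\ell_1}$, together with the permutation structure of every matrix appearing in $R_N(d)$; once these are in hand, the remainder is the elementary limit computation.
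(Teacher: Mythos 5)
Your proposal is correct and follows essentially the same route as the paper: the same decomposition $B(\gg)^{N}=(1+d/N)^{-N}\bigl(B(\zero)^{N}+d\widehat{P}+R_{N}(d)\bigr)$, the same use of the permutation (hence $\ell_1$--isometry) structure of $B(\zero)^{N}$ and of every matrix in $R_{N}(d)$, the same treatment of $\widehat{P}\ww$ via \eqref{eq:hatPww} and $\|e\|_{\ell_1}=\|v_-\|_{\ell_1}=2N$, and the same coefficient bound \eqref{stima-su-zeta-eta_jN}, leading to the identical $C_N(d)$ of \eqref{eq:Ctilde-hN}. The only cosmetic difference is that the paper states the estimate for any permutation-invariant vector norm before specializing to $\ell_1$, while you work with $\ell_1$ directly; your remark that the non-negativity of $\zeta_{j,N},\eta_{j,N}$ is what makes the bound on their plain sum usable is exactly the point the paper relies on implicitly.
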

\begin{proof}
Let $\ww\in \R^{2N}$. By means of the formula \eqref{eq:expansion} 
and the expansion formula \eqref{eq:n_0-N}, we  obtain
\begin{align*}
B(\gg)^{N}\ww & =\left(1+\frac dN\right)^{-N}\left[B(\zero)+ \frac dN B_1 \right]^{N}\ww\\
&= \left(1+\frac dN\right)^{-N} \left[ B(\zero)^{N}\ww +  \frac{d}{2N} \big(\left(\ww\cdot e\right)e+\left(\ww\cdot v_{-}\right)v_{-}\big)+ R_{N}(d)\ww \right]
\end{align*}
where we used \eqref{eq:hatPww}.

Let $||\cdot||$ be a vector norm that is invariant under components permutation of the vectors. Since $B(\zero)^{N}$ 
is permutation matrix and $R_{N}(d)$ is a linear combination of permutation matrices, we use \eqref{stima-su-zeta-eta_jN} to get that
\begin{align*}
||B(\gg)^{N}\ww|| & \le  \left(1+\frac dN\right)^{-N} ||\ww|| \left(1 +  \ee^{d}  - d -1 + \frac KN \right) \\
&\qquad +   \left(1+\frac dN\right)^{-N}  \frac{d}{2N} \left( |\ww\cdot e| \cdot ||e|| +|\ww\cdot v_{-}| \cdot ||v_-||\right)\,.
\end{align*}
In particular, the above estimate holds for $$||\cdot||=\|\cdot\|_{\ell_1}\,.$$ 
Since $\|e\|_{\ell_1}= \|v_-\|_{\ell_1}=2N$, if we set 
\begin{equation}\label{eq:Ctilde-hN}
C_{N}(d) \, \dot = \, \left(1+ \frac {d}N \right)^{-N} \left[\ee^{d}-d ~+~{\frac1N} K(d)\right]
\end{equation}
then the estimate \eqref{eq:norm1} follows. The proof of Proposition~\ref{prop:ell-1norm} is complete.
\end{proof}

The formula \eqref{eq:norm1} indicates that, as $N\to\infty$, 
\begin{align}\label{ineq:ell-1-contraction}
\bigl\| B(\gg)^{N}\ww \bigr\|_{\ell_1} &\le C_N(d) \bigl\|\ww \bigr\|_{\ell_1}\qquad \ww\in E_-\,,\\[1mm] \nonumber
\lim_{N\to\infty} C_N(d)  & = C(d) <1\,.
\end{align}
This implies that the matrix norm induced by the $\ell_1$--norm is asymptotically contractive 
for the power $B(\gg)^{N}$ on the subspace $E_-$, the norm being defined by
\begin{equation*}
    \bigl\|| B(\gg)^{N} \bigr\||_{1} = \max_{\|\ww\|_{\ell_1} =1} \bigl\| B(\gg)^{N}\ww \bigr\|_{\ell_1}\,.
\end{equation*}

Of course, for $\gamma=d/N$ and $N$ fixed, the sequence of matrices $B(\gg)^{n}$ will converge to zero 
as $n\to\infty$ on the subspace $E_-$ (that is, every vector $B(\gg)^{n}\ww$ with $\ww\in E_-$ converges to zero componentwise). 
Hence, every matrix norm will become contractive after a sufficiently large number $n$ of iterations.

However, what we state here above is that the contraction property holds for $n=N$, uniformly for large $N$, and for the specific norm induced by $\|\cdot\|_{\ell_1}$.

In conclusion, thanks to \eqref{ineq:ell-1-contraction}, we obtain a contractivity estimate for $n=N\to\infty$, that is for $T=1$. 
By iteration, as in the proof of \cite[Theorem 1, p.204]{A-A-DS2018}, one can deduce an exponentially decaying estimate, sketched as follows:

$\bullet$\quad for every integer $h\ge 1$ and every $t\in[h,h+1)$, one has
\begin{align*}
    \|J(\cdot,t\|_\infty \le \frac 1{2N} \tv \bar{J}_0 + \bigl\| B(\gg)^{hN}\bar \ww \bigr\|_{\ell_1} 
\end{align*}
where $\bar \ww$ is the projection of $\ssigma(0+)$ on $E_-$ and 
\begin{equation}
    \bar{J}_0: [0,1]\to \R\,,\qquad \bar{J}_0(x) = \begin{cases}
    {J}_0(x) & 0<x<1\\
    0  & x=0 \mbox{ or } x=1\,.
    \end{cases}
\end{equation}

$\bullet$\quad Therefore, by means of \eqref{ineq:ell-1-contraction}, one obtains
\begin{align*}
    \|J(\cdot,t\|_\infty &\le \frac 1{2N} \tv \bar{J}_0 + C_N(d)^{h}\bigl\|\bar \ww \bigr\|_{\ell_1} \\
    &\le \frac 1{2N} \tv \bar{J}_0 + C_N(d)^{-1} \ee^{- C t} \, \bigl\|\bar \ww \bigr\|_{\ell_1}
\end{align*}
for $N$ large enough so that $0<C_N(d)<1$\,, and $C=|\ln\{C_N(d)\}|$. 

We remark that the norm $\bigl\|\bar \ww \bigr\|_{\ell_1}$ depends on the total variation of the initial data (see \cite[p.205]{A-A-DS2018}); 
therefore the estimate above is not suitable to the extension to $L^\infty$ initial data.

\subsection{Contractivity of the invariant domain}\label{subsec:contractivity-inv-dom}
Next, under the assumptions \eqref{eq:gamma-costant}, we prove a contractivity property of the invariant domain $[m,M]^2$ for the approximate solutions.

\begin{proposition}\label{prop:invariance-wwd}
Given $\bar \ww\in\R^{2N}$ such that $\bar \ww\cdot \vv_{2N}=0$, and given $d\ge 0$, let
\begin{equation*}
\ww(d) = \bar \ww + \frac dN \left(1+\frac dN\right)^{-1}\Phi(\bar \ww)
\end{equation*}
where
\begin{equation}\label{def-of-Phi}
\Phi(\ww) = \left(\ww\cdot \vv_{2N}, - \ww\cdot \vv_2, \ww\cdot \vv_2,   \ldots,- \ww\cdot \vv_{2N-2},
\ww\cdot \vv_{2N-2},  - \ww\cdot \vv_{2N} \right)\,,\qquad \ww\in\R^{2N}
\end{equation}
for $\vv_{2\ell}$, $\ell=0,\ldots,N$ defined as in \eqref{def:v-ell}. Then one has
\begin{equation}\label{eq:inverse-of-Phi}
\bar \ww = \ww(d) - \frac d N \Phi(\ww(d))
\end{equation}
and
\begin{equation}\label{eq:inverse-of-Phi-B0N}
B(\zero)^{N}\bar \ww = B(\zero)^{N}\ww(d) - \frac d N \Phi(B(\zero)^{N}\ww(d))\,.
\end{equation}
Moreover, let $m\le 0\le M$ be such that
\begin{equation}\label{asump:bar-ww}
m\le \bar \ww\cdot \vv^\pm_{2\ell} \le M\qquad \ell=0,\ldots,N\,.
\end{equation}
Then one has, for every $d_1\ge 0$, $d>0$ and $j$, $k$:
\begin{align}
B(\ddone) \bar \ww \cdot  (\vv^\pm_{2j} - \vv^\pm_{2k})  &\le M-m\,,\label{eq:inv-domain-vector-form}\\
\ww(d)\cdot(\vv^\pm_{2j} - \vv^\pm_{2k}) & \le (1+d) (M-m)  
\,,\label{eq:estima-ww-d}\\
B(\ddone) \ww(d) \cdot  (\vv^\pm_{2j} - \vv^\pm_{2k})  &\le (1+d) (M-m)\,.\label{eq:estima-B0-ww-d}
\end{align}
\end{proposition}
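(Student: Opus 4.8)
The plan is to reduce all five assertions to a short list of algebraic identities for the linear map $\Phi$ of \eqref{def-of-Phi} and for the one--step transition matrix $B(\ddone)=B_2(\ddone)B_1$, exploiting throughout the normalization $\bar\ww\cdot\vv_{2N}=0$; note that $\vv_{2N}=e$, so this constraint is exactly $\bar\ww\cdot e=0$. First I would record the action of $\Phi$ on the test vectors of \eqref{def:v-ell}: a direct summation gives, for $m=1,\dots,N$,
\[\Phi(\ww)\cdot\vv_{2m}=\ww\cdot\vv_{2N}-\ww\cdot\vv_{2m},\]
and iterating this one finds $(\Phi^2+\Phi)(\ww)=(\ww\cdot\vv_{2N})\,\Pi e$. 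Since $\bar\ww\cdot\vv_{2N}=0$ this says $\Phi^2(\bar\ww)=-\Phi(\bar\ww)$; substituting $\ww(d)=\bar\ww+\tfrac dN(1+\tfrac dN)^{-1}\Phi(\bar\ww)$ into the right--hand side of \eqref{eq:inverse-of-Phi} and collecting the coefficient of $\Phi(\bar\ww)$, which telescopes to zero, proves \eqref{eq:inverse-of-Phi}. For \eqref{eq:inverse-of-Phi-B0N} I would apply $B(\zero)^N$ to \eqref{eq:inverse-of-Phi} and use that the reversal permutation $B(\zero)^N$ of \eqref{B-zero-N} commutes with $\Phi$ on vectors orthogonal to $e$: a componentwise check shows that $\Phi(B(\zero)^N\ww)-B(\zero)^N\Phi(\ww)$ is a scalar multiple of $\ww\cdot e$, which vanishes because $\ww(d)\cdot e=\bar\ww\cdot e=0$.

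For the inequalities the main tool is a single clean identity for the odd components of $B(\ddone)\mathbf u$. Writing $c=1/(1+\gamma)$ with $\gamma\ge0$ the common entry of $\ddone$, expanding $B(\ddone)=B_2(\ddone)B_1$ from \eqref{B2}, \eqref{BB} and telescoping the partial sums $\mathbf u\cdot\vv^\pm_{2\ell}$ yields, for $1\le k<j\le N$,
\[(B(\ddone)\mathbf u)\cdot(\vv^+_{2j}-\vv^+_{2k})=c\big[\mathbf u\cdot\vv^+_{2(j-1)}-\mathbf u\cdot\vv^+_{2(k-1)}\big]+c\gamma\big[\mathbf u\cdot\vv^-_{2k}-\mathbf u\cdot\vv^-_{2j}\big],\]
together with a mirror identity (obtained from the even components) for the $(-)$ differences. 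Taking $\mathbf u=\bar\ww$ and using \eqref{asump:bar-ww} bounds each bracket by $M-m$, so the right--hand side is $\le c(1+\gamma)(M-m)=M-m$; the boundary indices $k=0$ or $j=N$ carry one extra endpoint component of $B(\ddone)\bar\ww$ and are handled using $m\le0\le M$ from \eqref{bound-on-M-m}. This establishes \eqref{eq:inv-domain-vector-form}.

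Inequality \eqref{eq:estima-ww-d} then follows by expanding $\ww(d)$ and using the companion formulas $\Phi(\bar\ww)\cdot\vv^+_{2m}=\sum_{\ell=1}^{m-1}\bar\ww\cdot\vv_{2\ell}$ and $\Phi(\bar\ww)\cdot\vv^-_{2m}=\sum_{\ell=1}^{m}\bar\ww\cdot\vv_{2\ell}$ (derived as above, where $\bar\ww\cdot\vv_{2N}=0$ removes the endpoint term): since $|\bar\ww\cdot\vv_{2\ell}|=|\bar\ww\cdot\vv^+_{2\ell}-\bar\ww\cdot\vv^-_{2\ell}|\le M-m$, the $\Phi$--correction contributes at most $\tfrac dN(1+\tfrac dN)^{-1}|j-k|(M-m)\le\tfrac{dN}{N+d}(M-m)<d(M-m)$, and adding the main term $\bar\ww\cdot(\vv^\pm_{2j}-\vv^\pm_{2k})\le M-m$ gives $(1+d)(M-m)$. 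Finally \eqref{eq:estima-B0-ww-d} combines the two previous arguments: applying the displayed transition identity once more with $\mathbf u=\Phi(\bar\ww)$, whose $\vv^\pm$--differences are again controlled by $|j-k|(M-m)$, gives $B(\ddone)\Phi(\bar\ww)\cdot(\vv^\pm_{2j}-\vv^\pm_{2k})\le N(M-m)$; scaling this by $\tfrac dN(1+\tfrac dN)^{-1}$ and adding \eqref{eq:inv-domain-vector-form} yields the claimed bound.

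The main obstacle is less the bookkeeping in the transition identity than the precise isolation of the role of the constraint $\bar\ww\cdot\vv_{2N}=\bar\ww\cdot e=0$: it is exactly this orthogonality that makes $\Phi^2+\Phi$ annihilate $\bar\ww$ (hence the clean inversion \eqref{eq:inverse-of-Phi}), that makes $B(\zero)^N$ commute with $\Phi$ on the relevant subspace (hence \eqref{eq:inverse-of-Phi-B0N}), and that deletes the $\vv_{2N}$--term in $\Phi(\bar\ww)\cdot\vv^+_{2m}$ used for \eqref{eq:estima-ww-d}. The only genuinely delicate point is the treatment of the boundary indices in \eqref{eq:inv-domain-vector-form}, where the cancellation $c(1+\gamma)=1$ no longer closes the estimate by itself and one must invoke $m\le0\le M$.
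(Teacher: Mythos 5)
Your proof is correct and follows essentially the same route as the paper's: the involution property $\Phi^2=-\Phi$ on the subspace $\{\ww\cdot \vv_{2N}=0\}$ for \eqref{eq:inverse-of-Phi}, the commutation of $\Phi$ with the reversal permutation $B(\zero)^{N}$ for \eqref{eq:inverse-of-Phi-B0N}, and the convex decomposition $B(\ddone)=\frac{1}{1+d_1}\left(B(\zero)+d_1 B_1\right)$ combined with the action of $B(\zero)^t$ and $B_1$ on the vectors $\vv^\pm_{2\ell}$ and the partial-sum formulas for $\Phi(\bar\ww)\cdot\vv^\pm_{2\ell}$ for the three inequalities. Your only genuine additions are the cleaner general identity $(\Phi^2+\Phi)(\ww)=(\ww\cdot\vv_{2N})\,\Pi e$ and the explicit attention to the boundary indices $k=0$, $j=N$ in \eqref{eq:inv-domain-vector-form}, a point the paper's displayed identities gloss over.
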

\begin{proof} To prove \eqref{eq:inverse-of-Phi}, by the definition of $\ww(d)$, we need to prove that
\begin{equation}\label{eq:one}
\Phi(\ww(d)) = \left(1+\frac dN\right)^{-1}\Phi(\bar \ww) \,.    
\end{equation}
Thanks to the definition of $\vv_{2\ell}$\,,
$$
\vv_0 = \zero\,,\qquad \vv_{2\ell}=(\underbrace{1,\cdots,1}_{2\ell},0,\cdots,0)\qquad \ell=1,\ldots,N \,,
$$
we easily find that
\begin{equation*}
\Phi(\ww)\cdot \vv_{2\ell} = \sum_{j=1}^{2\ell} \Phi(\ww)_j = - \ww\cdot \vv_{2\ell}\,,\qquad \ell=1,\ldots,N\,.
\end{equation*}
Then we claim that the map $\Phi$ satisfies the following property: $$\Phi(\Phi(\ww)) = - \Phi(\ww)\,.$$ 
Indeed
\begin{align*}
\Phi(\Phi(\ww)) &=\left(
0, \underbrace{- \Phi(\ww)\cdot \vv_2}_{=\ww\cdot \vv_2}, \Phi(\ww)\cdot \vv_2,   \ldots,\underbrace{- \Phi(\ww)\cdot \vv_{2N-2}}_{=  \ww\cdot \vv_{2N-2}},
\Phi(\ww)\cdot \vv_{2N-2},  0 
\right)\\
&= - \Phi(\ww)\,.
\end{align*}
Since $\Phi$ is linear, one has
\begin{align*}
\Phi(\ww(d)) &= \Phi( \bar \ww) + \frac dN \left(1+\frac dN\right)^{-1}\underbrace{\Phi(\Phi(\bar \ww))}_{-\Phi(\bar \ww)}\\
& =  \Phi( \bar \ww)\left[1 -  \frac dN \left(1+\frac dN\right)^{-1}\right] 
= \left(1+\frac dN\right)^{-1}\Phi( \bar \ww)\,.
\end{align*}
This proves \eqref{eq:one} and hence \eqref{eq:inverse-of-Phi}. To prove \eqref{eq:inverse-of-Phi-B0N}, it is sufficient to prove that
\begin{equation}\label{eq:commut-Phi_B0N}
\Phi(B(\zero)^{N}\ww(d)) = B(\zero)^{N}\Phi(\ww(d))\,.
\end{equation}
Indeed, if \eqref{eq:commut-Phi_B0N} holds, from \eqref{eq:inverse-of-Phi} we find immediately that
\begin{equation*}
B(\zero)^{N}\bar \ww = B(\zero)^{N} \ww(d) - \frac d N  B(\zero)^{N}\Phi(\ww(d))  = B(\zero)^{N} \ww(d) - \frac d N  \Phi( B(\zero)^{N}\ww(d) )\,, 
\end{equation*}
hence  \eqref{eq:inverse-of-Phi-B0N} holds.

To prove \eqref{eq:commut-Phi_B0N}, let $\ww$ any vector in $\R^{2N}$ such that $ \ww\cdot \vv_{2N}=0$.  We recall \eqref{B-zero-N} to find that
\begin{align*}
B(\zero)^{N} \ww\cdot \vv_{2\ell} &=  \ww\cdot B(\zero)^{N} \vv_{2\ell} \\
&= \ww\cdot \left( \vv_{2N} - \vv_{2N-2\ell}\right) = \ww\cdot  \vv_{2N} -   \ww \cdot \vv_{2N-2\ell}\\
&= -  \ww \cdot  \vv_{2N-2\ell}
\end{align*}
and hence
\begin{equation*}
\Phi(B(\zero)^{N} \ww) = \left(0,  \ww\cdot \vv_{2N-2}, - \ww\cdot \vv_{2N-2},   \ldots, \ww\cdot \vv_{2},
- \ww\cdot \vv_{2},  0 \right) = B(\zero)^{N} \Phi( \ww)\,.
\end{equation*}
Since $\ww(d)\cdot \vv_{2N}=0$ for every $d\ge 0$, the previous identity applies and \eqref{eq:commut-Phi_B0N} holds. 

\smallskip
To prove \eqref{eq:inv-domain-vector-form}, recall \eqref{decompose-B}, then we have
\begin{align*}
B(\ddone) \bar \ww \cdot  (\vv^\pm_{2j} - \vv^\pm_{2k})=\frac{1}{1+d_1}\left( \underbrace{ B(\zero)\bar \ww \cdot  (\vv^\pm_{2j} - \vv^\pm_{2k})}_{(I)}+d_1 \underbrace{ B_1\bar \ww \cdot  (\vv^\pm_{2j} - \vv^\pm_{2k})}_{(II)}\right)
\end{align*}
Estimate of $(I)$, 
\begin{align*}
(I)= \bar \ww \cdot  B(\zero)^t (\vv^\pm_{2j} - \vv^\pm_{2k})\,,
\end{align*}
and one can check that the following holds true
\begin{align*}
B(\zero)^t (\vv^+_{2j} - \vv^+_{2k})&=\vv^+_{2j-2} - \vv^+_{2k-2}\\
B(\zero)^t (\vv^-_{2j} - \vv^-_{2k})&=\vv^+_{2j+2} - \vv^+_{2k+2}\,.
\end{align*}
Therefore, by \eqref{asump:bar-ww}, we get 
\begin{align*}
(I)= \begin{cases} \bar \ww\cdot(\vv^-_{2j-2} - \vv^-_{2k-2})\le M-m\\
 \bar \ww\cdot(\vv^+_{2j+2} - \vv^+_{2k+2})\le M-m
\end{cases}
\end{align*}
Estimate of $(II)$, one has the following
\begin{align*}
(II)&=\bar\ww\cdot B_1(\vv^\pm_{2j} - \vv^\pm_{2k})\\
&=- \bar\ww\cdot (\vv^\mp_{2j} - \vv^\mp_{2k})\\
&\leq M-m\,,
\end{align*}
the last inequality holds by \eqref{asump:bar-ww}. 
Hence, 
\begin{align*}
B(\dd) \bar \ww \cdot  (\vv^\pm_{2j} - \vv^\pm_{2k})\leq \frac{1}{1+d_1}\left((M-m)+d_1(M-m)\right)=M-m\,.
\end{align*}
The proof of \eqref{eq:inv-domain-vector-form} is complete.
To prove \eqref{eq:estima-ww-d}, one has that
\begin{equation*}
\ww(d)\cdot \vv^\pm_{2j} = \bar \ww\cdot\vv^\pm_{2j} + \frac dN \left(1+\frac dN\right)^{-1}\Phi(\bar \ww)\cdot \vv^\pm_{2j}\,,
\end{equation*}
where the map $\Phi$ satisfies 
\begin{align*}
\Phi(\bar \ww)\cdot \vv^-_{2j}&=\sum_{\ell=1}^{j} \bar \ww\cdot \vv_{2\ell} 
\\
\Phi(\bar \ww)\cdot \vv^+_{2j}&=\sum_{\ell=1}^{j} \bar \ww\cdot \vv_{2\ell-2} 
\,.
\end{align*}
By \eqref{asump:bar-ww} we find that
\begin{equation*}
\bar \ww\cdot \vv_{2\ell} = \bar \ww\cdot\left(\vv^+_{2\ell}-\vv^-_{2\ell}\right) \le M-m\,,
\end{equation*}
and hence we have
\begin{align*}
\ww(d)\cdot(\vv^-_{2j} - \vv^-_{2k})  
&=  \bar \ww\cdot (\vv^-_{2j} -  \vv^-_{2k}) + \frac dN \left(1+\frac dN\right)^{-1} \sum_{\ell=k+1}^{j} \bar \ww\cdot \vv_{2\ell}\\
& \le (M-m) + \frac dN \underbrace{\left(1+\frac dN\right)^{-1}}_{\le 1} \underbrace{(j-k)}_{\le N} \, (M-m)\\
& \le (M-m) \left( 1 + d \right)
\end{align*}
from which \eqref{eq:estima-ww-d} follows, in the case of the $v^-$ vectors. The estimate for $\ww(d)\cdot(\vv^+_{2j} - \vv^+_{2k})$ 
is completely similar and we omit it. 

The proof of \eqref{eq:estima-B0-ww-d} is a consequence of \eqref{eq:estima-ww-d} and is similar to the proof of \eqref{eq:inv-domain-vector-form}.
\end{proof}

\begin{theorem} \label{th:Linfty-d} Let $f^\pm$ be the approximate solution corresponding to the linear problem \eqref{eq:telegrapher}. Let $N\in 2\N$ and let $m\le 0 \le M$ be the constant values defined at \eqref{def:inv-dom}\,.

Then there exist constants $\CC_N(d)$ and $\widehat C>0$, such that 
\begin{align}\label{estim:fpm-time-h}
 \sup f^\pm(\cdot,t^{N})-\inf f^\pm(\cdot,t^{N}) &\leq \CC_N(d) (M-m)+  \frac{\widehat C}{N}\,.
\end{align}
\end{theorem}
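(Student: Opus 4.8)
The plan is to reduce the oscillation $\sup f^\pm(\cdot,t^{N})-\inf f^\pm(\cdot,t^{N})$ to a bound on pairwise differences at the lattice points, and then to feed the expansion formula \eqref{eq:sigma-thN} into the representation formula \eqref{eq:represent-fpm-linear}. Since at $t=t^{N}$ the approximate solution is piecewise constant with break points at the $x_j$, one has $\sup f^\pm-\inf f^\pm=\max_{j,k}\bigl(f^\pm(x_j\pm,t^{N})-f^\pm(x_k\pm,t^{N})\bigr)$, and by \eqref{eq:represent-fpm-linear} the term $\tfrac12\rho(0+,t^{N})$ drops out of every such difference. The leftover source sum $\tfrac dN\sum_{k<\ell\le j}J(x_\ell,t^{N})$ is \emph{not} negligible: it is $O(1)$ because $|J|\le M-m$ by \eqref{bound-on-rho-J}. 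Hence the first step is to absorb it through the map $\Phi$ of \eqref{def-of-Phi}: setting $\bar\ww:=(\Id-\tfrac dN\Phi)\ssigma(t^{N})$ and using $\Phi(\ssigma)\cdot\vv^-_{2j}=\sum_{\ell=1}^{j}\ssigma\cdot\vv_{2\ell}$, $\Phi(\ssigma)\cdot\vv^+_{2j}=\sum_{\ell=0}^{j-1}\ssigma\cdot\vv_{2\ell}$, one gets
\begin{equation*}
f^\pm(x_j+,t^{N})-f^\pm(x_k+,t^{N}) = \bar\ww\cdot(\vv^\pm_{2j}-\vv^\pm_{2k}) + O\!\left(\tfrac dN(M-m)\right)\,,
\end{equation*}
the $O(1/N)$ correction (present only for $f^+$, and between the $x_j-$ and $x_j+$ limits) being controlled by $|J|\le M-m$. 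Thus it suffices to estimate the oscillation of $\bar\ww=(\Id-\tfrac dN\Phi)\,B(\gg)^{N}\ssigma(0+)$ against the differences $\vv^\pm_{2j}-\vv^\pm_{2k}$.

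Next I would insert \eqref{eq:sigma-thN} and split $\bar\ww$ into the three contributions of $B(\zero)^{N}$, of $d\widehat P$, and of $R_{N}(d)$, each times $A:=(1+\tfrac dN)^{-N}$. For the leading term the commutation identity \eqref{eq:commut-Phi_B0N}--\eqref{eq:inverse-of-Phi-B0N} gives $(\Id-\tfrac dN\Phi)B(\zero)^{N}\ssigma(0+)=B(\zero)^{N}\bar\ww_0$ with $\bar\ww_0:=(\Id-\tfrac dN\Phi)\ssigma(0+)$; the invariant domain at $t=0$ makes $\bar\ww_0$ satisfy \eqref{asump:bar-ww} (in difference form, up to $O(1/N)$), and since $B(\zero)^{N}$ is a permutation carrying $\vv^+_{2j}-\vv^+_{2k}$ into $\vv^-_{2N-2j}-\vv^-_{2N-2k}$ (and symmetrically for the $\vv^-$ differences), the same permutation--invariance of the invariant domain used for \eqref{eq:inv-domain-vector-form} yields $B(\zero)^{N}\bar\ww_0\cdot(\vv^\pm_{2j}-\vv^\pm_{2k})\le M-m$. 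For the first--order term I would use the cancellation \eqref{eq:hatPww-ssigma}, namely $d\widehat P\ssigma(0+)=\tfrac{d}{2N}\bigl(\ssigma(0+)\cdot v_-\bigr)v_-$, together with the elementary fact $\Phi(v_-)=0$ (all even partial sums of $v_-$ vanish); this term therefore survives $(\Id-\tfrac dN\Phi)$ unchanged and contributes at most $\tfrac dN A\,|\ssigma(0+)\cdot v_-|\,|v_-\cdot(\vv^\pm_{2j}-\vv^\pm_{2k})|\le \tfrac{Cd}{N}\tv\bar J_0$ by \eqref{eq:sigma-dot-v_-}, an $O(1/N)$ error.

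For the remainder $R_{N}(d)$, which by \eqref{eq:R_n} is a nonnegative combination of permutation matrices whose coefficients sum to at most $\ee^{d}-d-1+\tfrac KN$ by \eqref{stima-su-zeta-eta_jN}, I would bound each permuted contribution by $M-m$ — again by permutation--invariance of the invariant domain, controlling the attendant $\tfrac dN\Phi$--corrections through the estimates \eqref{eq:estima-B0-ww-d} of Proposition~\ref{prop:invariance-wwd} — obtaining a total of $(\ee^{d}-d-1+\tfrac KN)(M-m)$. Collecting the three pieces, the leading coefficient is $1+(\ee^{d}-d-1+\tfrac KN)=\ee^{d}-d+\tfrac KN$, so that multiplication by $A$ reproduces exactly $\CC_N(d)=(1+\tfrac dN)^{-N}(\ee^{d}-d+\tfrac KN)$ as in \eqref{eq:Ctilde-hN}, with $\CC_N(d)\to 1-d\ee^{-d}=C(d)<1$ by \eqref{def:Chd}; all the remaining $O(1/N)$ contributions are gathered into $\widehat C/N$.

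The main obstacle is the third step: controlling the source/coupling term in terms of $M-m$ alone and uniformly in $N$. A crude bound on the $\tfrac dN\Phi$--correction against the remainder produces an $O(1)$ error proportional to $\tv\bar J_0$ — which would only yield a $BV$ contraction, not an $L^\infty$ one — precisely because $\Phi$ commutes cleanly with the leading permutation $B(\zero)^{N}$ but not with the individual permutations composing $R_{N}(d)$. Overcoming this is exactly the purpose of Proposition~\ref{prop:invariance-wwd}: its estimates \eqref{eq:inv-domain-vector-form}--\eqref{eq:estima-B0-ww-d} bound the one--step--evolved and $\Phi$--corrected quantities $B(\ddone)\bar\ww$, $\ww(d)$ and $B(\ddone)\ww(d)$ by fixed multiples of $M-m$, and it is their careful combination with the expansion \eqref{eq:sigma-thN} that keeps every total--variation--dependent quantity at order $1/N$. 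This is the delicate point separating the invariant--domain ($L^\infty$) contraction from the cruder sum--norm ($BV$) contraction of Subsection~\ref{subsec:sum-norm}.
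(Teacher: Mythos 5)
Your proposal follows the paper's proof essentially step for step: reduction of the oscillation to differences at the lattice points $x_j+$ (with the $\rho(0+,t)$ term cancelling and the $x_j\pm$ discrepancy of size $d(M-m)/N$), absorption of the source sum into the operator $\Id-\tfrac dN\Phi$, insertion of the expansion \eqref{eq:sigma-thN}, the commutation identity \eqref{eq:inverse-of-Phi-B0N} to handle the $B(\zero)^{N}$ piece via the undamped data $\widehat\ssigma(0+)=(\Id-\tfrac dN\Phi)\ssigma(0+)$, the $v_-$--cancellation for the $\widehat P$ piece controlled by \eqref{eq:sigma-dot-v_-}, and Proposition~\ref{prop:invariance-wwd} for the remainder. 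All of this is correct and is exactly the paper's architecture, including the identification of the delicate point (the remainder term is where a crude bound would reintroduce $\tv \bar J_0$ at order one).

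There is, however, one quantitative slip in the remainder estimate. You assert that each permuted-and-$\Phi$-corrected contribution in $R_N(d)$ is bounded by $M-m$, so that the total is $\bigl(\ee^{d}-d-1+\tfrac KN\bigr)(M-m)$ and the final constant is $(1+\tfrac dN)^{-N}(\ee^{d}-d+\tfrac KN)$ as in \eqref{eq:Ctilde-hN}, with limit $1-d\ee^{-d}$. That is the $\ell_1$-norm constant $C_N(d)$ of Proposition~\ref{prop:ell-1norm}, not the constant of this theorem. The estimates you invoke, namely \eqref{eq:estima-ww-d}--\eqref{eq:estima-B0-ww-d}, only give $(1+d)(M-m)$ for the permuted term $P\ssigma(0+)\cdot(\vv^\pm_{2j}-\vv^\pm_{2k})$ (because $\ssigma(0+)$ itself is $\ww(d)$, not the vector $\bar\ww$ satisfying \eqref{asump:bar-ww}), and the correction $\tfrac dN\Phi(P\ssigma(0+))\cdot(\vv^\pm_{2j}-\vv^\pm_{2k})$ is a sum of up to $N$ terms each of size $(1+d)(M-m)$, hence contributes $d(1+d)(M-m)$, not $O(1/N)$. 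Each remainder term is therefore bounded by $(1+d)^2(M-m)$, which leads to $\CC_N(d)=(1+\tfrac dN)^{-N}\bigl(1+(1+d)^2(\ee^{d}-d-1+\tfrac KN)\bigr)$ as in \eqref{def:hat-CN(h,d)}, with limit $\CC(d)=\ee^{-d}\bigl(1+(1+d)^2(\ee^{d}-d-1)\bigr)$ as in \eqref{def:CC}. Since the theorem only asserts the existence of some $\CC_N(d)$, and the corrected constant still satisfies $\CC(d)<1$ for $0<d<d^*$, your argument survives once the constant is fixed; but as written the bound $M-m$ per remainder term does not follow from the tools you cite.
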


\begin{proof} The proof employs the representation formula \eqref{eq:represent-fpm-linear} for $f^\pm$
and the expansion formula \eqref{eq:sigma-thN}.

\medskip\par
$\bullet$\quad We start from the representation formula \eqref{eq:represent-fpm-linear}. First we notice that
\begin{equation}\label{eq:fpm-xpm}
|f^\pm(x_j+,t) - f^\pm(x_j-,t)| \le \sup |J(\cdot,t)| \frac d N  \le (M-m)   \frac d N
\end{equation}
that vanishes as $N\to\infty$.

Since the $f^\pm$ are possibly discontinuous only at $x=x_j$ and along $(\pm1)$-- waves, then their image is given by the values at $x=0+$, $x=1-$ and $x=x_j\pm$ with $j=1,\ldots,N-1$. For this reason
in the following we will focus only the values of $f^\pm$ at $x=x_j+$, that is
\begin{equation}\label{eq:represent-fpm-1}
f^\pm(x_j+,t) = \ssigma(t) \cdot  \vv^\pm_{2j} +   \frac 12 \rho(0+,t) 
-   \frac d N \sum_{0\le \ell \le j } J(x_\ell,t) \,,\qquad j=0,\ldots,N-1
\end{equation}
and then we will use \eqref{eq:fpm-xpm} to conclude.

\medskip\par
$\bullet$\quad Let's rewrite the last sum in \eqref{eq:represent-fpm-1}. The identities \eqref{def:J(x,t)}--\eqref{rmk:vv} yield
$$
J(x_\ell,t) = 
\ssigma(t)\cdot \vv_{2\ell}\,.
$$
By the definition of $\Phi$ at \eqref{def-of-Phi},
\begin{equation*}
\Phi(\ssigma) = \left(0, - \ssigma\cdot \vv_2, \ssigma\cdot \vv_2,   \ldots,- \ssigma\cdot \vv_{2N-2},
\ssigma\cdot \vv_{2N-2},  0 \right)
\end{equation*}
and therefore
\begin{equation}\label{eq:represent-fpm-2}
\sum_{0\le \ell \le j } J(x_\ell,t) = \Phi(\ssigma(t))\cdot  \vv^-_{2j}= \Phi(\ssigma(t))\cdot  \vv^+_{2j+2}\,.
\end{equation}


\medskip\par
$\bullet$\quad Let $j$, $k \in \{0,\ldots, N-1\}$, $j>k$. We combine \eqref{eq:represent-fpm-1} and \eqref{eq:represent-fpm-2} to get
\begin{equation*}
\begin{aligned}
(a) \quad f^-(x_{j}+,t)- f^-(x_k+,t) & = \left(\ssigma(t) - \frac d N \Phi(\ssigma(t)) \right) \cdot  \left( \vv^-_{2j} -  \vv^-_{2k}\right) \\
(b) \quad f^+(x_{j}+,t)- f^+(x_k+,t) & = 
\ssigma(t)\cdot  \left( \vv^+_{2j} -  \vv^+_{2k}\right) - \frac d N \Phi(\ssigma(t)) \cdot  \left( \vv^+_{2j+2} -  \vv^+_{2k+2}\right)\,.
\end{aligned}
\end{equation*}
We claim that the following inequalities hold:
\begin{equation}\label{diffrence-f-pm}
f^\pm(x_{j}+,t)- f^\pm(x_k+,t) \le \left(\ssigma(t) - \frac d N \Phi(\ssigma(t)) \right) \cdot  \left( \vv^\pm_{2j} -  \vv^\pm_{2k}\right)
 + \frac{2d}N (M-m)\,.
\end{equation}
Indeed, from the identity $(a)$ above we immediately get \eqref{diffrence-f-pm} for the $"-"$. On the other hand, 
to prove \eqref{diffrence-f-pm} for the $"+"$ sign, it is enough to check that
\begin{equation*}
\left| \Phi(\ssigma(t)) \cdot  \left( \vv^+_{2j+2} - \vv^+_{2j}-  \vv^+_{2k+2} + \vv^+_{2k}\right)\right| \le 2 (M-m)\,,
\end{equation*}
which is true since 
$$
|\Phi(\ssigma(t)) \cdot  \left( \vv^+_{2j+2} - \vv^+_{2j}\right)| = |\ssigma(t)\cdot \vv_{2j}| = |J(x_j,t)|\le M-m\,.
$$
Therefore the claim is proved.

\smallskip\par\noindent
Next, we proceed with the analysis of the term
$$
\left(\ssigma(t) - \frac d N \Phi(\ssigma(t)) \right) \cdot  \left( \vv^\pm_{2j} -  \vv^\pm_{2k}\right) = (*)
$$
that appears in \eqref{diffrence-f-pm}.

By applying the identity \eqref{eq:sigma-thN},
the expression above can be written as a sum of three terms, 
corresponding to $B(\zero)^{N}$, $\widehat{P}$ and $R_{N}(d)$ respectively:
\begin{equation}\label{id_ABC}
(*) = \left(1+\frac d N \right)^{-N} \left[ \AA_1 + \AA_2 + \AA_3  \right]
\end{equation}
where
\begin{align*}
 \AA_1 & =  \left[B(\zero)^{N}\ssigma(0+)  - \frac d N \Phi\left( B(\zero)^{N}\ssigma(0+)\right)\right]  \cdot  (\vv^\pm_{2j} - \vv^\pm_{2k}) \\[1mm]
 \AA_2 & = {d}   \left[\widehat{P}\ssigma(0+)  - \frac d N \Phi\left( \widehat{P} \ssigma(0+)\right)\right] \cdot (\vv^\pm_{2j} - \vv^\pm_{2k})    \\[1mm]
 \AA_3 & = \left[ R_{N}(d)\ssigma(0+) - \frac d N \Phi\left( R_{N}(d)\ssigma(0+)\right)\right]    \cdot  (\vv^\pm_{2j} - \vv^\pm_{2k})\,.
\end{align*}

\medskip\par
$\bullet$\quad \textbf{Estimate for $\AA_2$.}\quad
We claim that 
\begin{equation*}
|\AA_2|\le \frac{d}N |\ssigma(0+)\cdot v_{-}|\,.
\end{equation*} 
To prove this claim, it is sufficient to prove that
\begin{equation*}
\begin{aligned}
(i)  &\qquad \widehat{P} \ssigma(0+) \cdot  (\vv^\pm_{2j} - \vv^\pm_{2k}) \in \{ \pm 1,0\}\,,
\\
(ii) &\qquad   \Phi\left(\widehat{P} \ssigma(0+)\right) \cdot  (\vv^\pm_{2j} - \vv^\pm_{2k})=0\,.
\end{aligned}
\end{equation*}

To prove $(i)$, we use \eqref{eq:hatPww-ssigma} to write that
\begin{align*}
\widehat{P} \ssigma(0+) \cdot \vv^\pm_{2\ell} =  \frac{1}{2N}\left(\ssigma(0+)\cdot v_{-}\right) 
\left(v_{-} \cdot \vv^\pm_{2\ell}\right) \,,
\end{align*}
where $v_-$ is the eigenvector in \eqref{v-pm}:
$$
v_{-}=(1,-1,-1,1,\ldots,1,-1,-1,1)\,. 
$$ 
From \eqref{def:vv-pm}, it is immediate to check that  
$$
v_{-}\cdot \vv^+_{2\ell}= (1,-1,-1,1,\ldots,1,-1,-1,1)\cdot  (\underbrace{1,0,\cdots,1,0}_{2\ell},0,\cdots,0)\in \{0, 1\}\,, 
$$
and similarly 
$$
v_{-}\cdot \vv^-_{2\ell}= - (1,-1,-1,1,\ldots,1,-1,-1,1)\cdot  (\underbrace{0,1,\cdots,0,1}_{2\ell},0,\cdots,0)\in \{0, 1\}\,. 
$$
More precisely, 
\begin{equation*}
v_{-}\cdot \vv^+_{2\ell}= v_{-}\cdot \vv^-_{2\ell} = \begin{cases}1&\mbox{ if } \ell \mbox{ odd}\\
0&\mbox{ if }  \ell  \mbox{ even}\,.
\end{cases}
\end{equation*}
Therefore, it is immediate to conclude that $(i)$ holds. 

To prove $(ii)$, we use the identity 
\begin{equation*} 
 \sum_{\ell=1}^j \ww\cdot \vv_{2\ell} = \Phi(\ww) \cdot  \vv^-_{2j} 
 \,,\qquad j\ge 1
\end{equation*}
that follows from the definition of $\Phi$ at \eqref{def-of-Phi},
to find that
\begin{align*}
 \Phi\left(\widehat{P} \ssigma(0+)\right)\cdot \vv^-_{2j} &= \sum_{\ell=1}^j \widehat{P} \ssigma(0+) \cdot \vv_{2\ell}\\
&= \frac{1}{2N}\left(\ssigma(0+)\cdot v_{-}\right)   \sum_{\ell=1}^j  \underbrace{v_{-} \cdot \vv_{2\ell}}_{=0}\\
 &=0\,.
\end{align*}
Here above we used the fact that $v_{-} \cdot \vv_{2\ell} = v_{-} \cdot\left( \vv^+_{2\ell} -  \vv^-_{2\ell}\right) =0$. 
The proof for 
$$\Phi\left(\widehat{P} \ssigma(0+)\right)\cdot \vv^+_{2j} = \sum_{\ell=1}^{j-1} \widehat{P} \ssigma(0+) \cdot \vv_{2\ell}$$ 
is totally analogous. The claim is proved.

\medskip\par
$\bullet$\quad  \textbf{Towards an estimate for $\AA_1$ and $\AA_3$.}
Consider the initial-boundary value problem with the same initial data and boundary condition as the one corresponding 
to $\ssigma(t)$, but for $k(x)\equiv 0$. Hence the problem is linear and undamped.

The corresponding evolution vector, that we denote with $\widehat \ssigma(t)$, is defined inductively by
\begin{equation}\label{def:widehat-sigma}
\begin{aligned}
\widehat \ssigma(t^n+) & = B(\zero)^{n} \widehat  \ssigma(0+)  
\,,\\
\widehat \ssigma(t^{n+\frac12}+) & = B_1 \widehat \ssigma(t^n+) \,,
\end{aligned}\qquad n\ge 1\,.
\end{equation}
About $\widehat \ssigma(0+)$ we claim that
\begin{equation}\label{init-data-k=0}
\widehat \ssigma(0+) = \ssigma(0+)  - \frac d N \Phi(\ssigma(0+))  
\end{equation}
where 
\begin{align*}
\Phi(\ssigma(0+))  &=  \left(0, - \ssigma(0+)\cdot \vv_2, \ssigma(0+)\cdot \vv_2,   \ldots,- \ssigma(0+)\cdot \vv_{2N-2},
\ssigma(0+)\cdot \vv_{2N-2},  0\right) \\
&=  \left(0, - J(x_{1},0+),  +J(x_{1},0+), \ldots,- J(x_{N-1},0+), + J(x_{N-1},0+), 0\right)^t\,.
\end{align*}
\smallskip\par\noindent
To prove the claim, 

- we observe that $\widehat\sigma_1= \sigma_1$ and $\widehat\sigma_{2N}= \sigma_{2N}$, it is obvious since $\Phi_1(\sigma (0+))=0$ and $\Phi_{2N}(\sigma (0+))=0$.

- at every $x_j$, $j=1,\ldots,N-1$ we compare $(\widehat\sigma_{2j}, \widehat\sigma_{2j+1})$ with $(\sigma_{2j}, \sigma_{2j+1})$.
In the notation of Proposition~\ref{prop:multiple}, let $J_*$ the middle value for $J$ in the solution to the Riemann problem with $d=\bar k>0$ and
$J_m= f^+_\ell - f^-_r$ the middle value for $J$ when $\bar k=0$. Using \eqref{J*_rho*}, we have the following identity:
\begin{equation*}
J_* + \frac dN  J_* = J_m\,,
\end{equation*}
from which we deduce
\begin{equation*}
\widehat\sigma_{2j} = J_m - J_\ell = (\underbrace{J_* - J_\ell}_{=\sigma_{2j}}) + \frac dN  J_* = \sigma_{2j} +  \frac dN J(x_{j},0+)\,.
\end{equation*}
Similarly one has
\begin{equation*}
\widehat\sigma_{2j+1} = J_r - J_m = (\underbrace{J_r - J_*}_{=\sigma_{2j+1}}) - \frac dN  J_* = \sigma_{2j+1} -  \frac dN J(x_{j},0+)\,.
\end{equation*}
Therefore \eqref{init-data-k=0} holds. The claim is proved.

\smallskip\par\indent
It is easy to check that \eqref{init-data-k=0} can be inverted as follows:
\begin{equation*}
\ssigma(0+) = \widehat \ssigma(0+)  + \frac d N  \left(1+\frac dN \right)^{-1}\Phi(\widehat\ssigma(0+))  \,,
\end{equation*}
see Proposition~\ref{prop:invariance-wwd}.

\medskip\par
$\bullet$\quad \textbf{Estimate for $\AA_1$.}\quad We apply \eqref{eq:inverse-of-Phi-B0N} to find that
\begin{align*}
 \AA & =  B(\zero)^{N} \widehat \ssigma(0+) \cdot  (\vv^\pm_{2j} - \vv^\pm_{2k})  \le M-m\,.
\end{align*}

\medskip\par
$\bullet$\quad \textbf{Estimate for $\AA_3$.}\quad 
By using \eqref{eq:R_n} we get 
\begin{align*}
& R_{N}(d)\ssigma(0+) - \frac d N \Phi\left( R_{N}(d)\ssigma(0+)\right) \\
& = \sum_{j=0}^{N-1}   \zeta_{j,N} \left\{ B_1 B(\zero)^{N-2j-1} \ssigma(0+)- \frac d N   \Phi\left(B_1 B(\zero)^{N-2j-1} \ssigma(0+)\right) \right\}\\
&\qquad 
 + \sum_{j=1}^{N-1}  \eta_{j,N} \left\{ B(\zero)^{2j-N} \ssigma(0+)- \frac d N   \Phi\left(B(\zero)^{2j-N} \ssigma(0+)\right) \right\}\,.
\end{align*}

By \eqref{eq:estima-B0-ww-d} for $d_1=0$, we have 
\begin{align*}
B(\zero)^{n} \ssigma(0+)- \frac d N \Phi\left(B(\zero)^{n} \ssigma(0+)\right)&\leq (1+d)(M-m) + d (1+d)(M-m)\\
&=(1+d)^2(M-m)
\end{align*}
%
The same hold for the term containing $B_1$. Therefore, by \eqref{stima-su-zeta-eta_jN},
\begin{align*}
\AA_3\leq \left(1+d\right)^2 \left(\ee^{d}  - d -1+ {\frac {K} N}\right)\left(M-m\right) 
\end{align*}
Finally, by recalling \eqref{id_ABC} and 
collecting the bounds on the terms $\AA_1$, $\AA_2$ and $\AA_3$, and using \eqref{eq:sigma-dot-v_-}, we get 
\begin{align*}
(*)&\leq \CC_N(d) \left(M-m\right) +  \frac{d}{N}\left(1+\frac d N \right)^{-N} \tv \bar{J}_0
\end{align*}
where
\begin{equation}\label{def:hat-CN(h,d)}
\CC_N(d)\,\dot{=}\, \left(1+\frac d N \right)^{-N}\left(1+(1+d)^2\left(\ee^{d}-d-1+{\frac {K} N}\right)\right)\,.
\end{equation}
In conclusion, combining the estimate above with \eqref{eq:fpm-xpm} and \eqref{diffrence-f-pm}, we conclude that
\begin{align*} 
0&\leq \sup f^\pm(\cdot,t^{N})-\inf f^\pm(\cdot,t^{N}) \leq \CC_{N}(d) (M-m)+  \frac{\widehat{C}}{N}
\end{align*}
for $\widehat{C}$ that can be chosen to be independent on $N$ as follows:
$$\widehat{C}= d \left[
\tv \bar{J}_0 + 3(M-m)\right]\,.$$
The proof of Theorem~\ref{th:Linfty-d} is now complete.
\end{proof}

Now we are ready to complete the proof of Theorem~\ref{main-theorem-2}.

\begin{proof} The proof of Theorem~\ref{main-theorem-2} is a consequence of \eqref{estim:fpm-time-h} in Theorem~\ref{th:Linfty-d}.

Indeed, given $(f^\pm)^\DX$, the convergence of a subsequence towards $f^\pm$ holds in $L^1(I)$ for all $t>0$ and hence, possibly up to a subsequence, almost everywhere. Hence we can pass to the limit in \eqref{estim:fpm-time-h} and get that
\begin{equation*}
 \esssup f^\pm(\cdot,1)-\essinf f^\pm(\cdot,1) \leq \CC(d) (M-m)    
\end{equation*}
where 
\begin{equation}\label{def:CC}
 \CC_{N}(d) \quad \rightarrow \quad \ee^{-d}\left(1+(1+d)^2\left(\ee^{d}-d-1\right)\right)=:\CC(d)\,,\qquad N\to\infty \,.   
\end{equation}
Since $\CC(0)=1$, $\CC'(0)=-1$ and $\CC(d)\to+\infty$ as $d\to+\infty$, then there exists a value $d^*>0$ such that $\CC(d^*)=1$ and
\begin{equation}\label{CC-less-than-1}
    0<\CC(d)<1\,,\qquad 0<d<d^*\,.
\end{equation}
This completes the proof of \eqref{eq:contraction-M1m1} for initial data $(\rho_0,J_0)\in BV(I)$.

On the other hand, if $(\rho_0,J_0)\in L^\infty(I)$, then there exists a sequence $(\rho_{0,n},J_{0,n})\in BV(I)$
that converges to $(\rho_0,J_0)$ in $L^1(I)$, and hence the limit solution satisfies the same $L^\infty$ bounds. Therefore \eqref{eq:contraction-M1m1} holds. 
The proof of Theorem~\ref{main-theorem-2} is complete.
\end{proof}

\section{Proof of Theorem~\ref{main-theorem-3-applications}}\label{sec:6} 
\setcounter{equation}{0}
In this section we prove Theorem~\ref{main-theorem-3-applications}, by employing the contracting estimate established in 
Theorem~\ref{main-theorem-2}. For the system
\begin{equation*}
\begin{cases}
\partial_t\rho +  \partial_x J  = 0, &\\
\partial_t J  +  \partial_x \rho = - 2 d \alpha(t) J, &
\end{cases}
\end{equation*}
we consider the following two situations: 

\textbf{(a)} $\alpha(t)\equiv 1$ 

\textbf{(b)} $\alpha(t)$ as in \eqref{hyp-on-alpha_ON-OFF} with $T_1\ge 1$\,. 

\par\noindent
Let's examine each one in detail.



\medskip\par\noindent

\textbf{(a)} 
In this case, we start by observing that the invariant domain property in Theorem~\ref{theorem:well-posedness} holds also for every $\bar t>0$: if 
\begin{equation*}
M(\bar t)= \esssup_I{f^\pm(\cdot,\bar t)}\,,\qquad m(\bar t) = \essinf_I{f^\pm(\cdot,\bar t)}\,,\qquad \bar t>0
\end{equation*}
then 
\begin{equation*}
    m(\bar t)\le f^\pm (x,t)\le M (\bar t) \qquad \mbox{for }a.e.\, x,\quad t> \bar t\,,
\end{equation*}
and the functions $- m(\bar t)$, $M(\bar t)$ are monotone non-increasing.

Let's define $M_0 = M$, $m_0=m$ and, for $h\in \N$,
\begin{equation*}
    M_h= \esssup_I{f^\pm(\cdot,h)}\,,\qquad m_h = \essinf_I{f^\pm(\cdot,h)}\qquad h\ge 1\,.
\end{equation*}
By the monotonicity property above, the two sequences satisfy 
\begin{equation}\label{m-M-n-monotone}
    m_0\le m_1 \le \ldots \le 0\le \ldots \le M_1\le M\,.
\end{equation}
We claim that the two sequences converge both to 0. Indeed,
by applying \eqref{eq:contraction-M1m1} iteratively, we obtain 
\begin{equation*}
    M_h- m_h \le \CC(d) \left(M_{h-1}- m_{h-1}\right)\,,\qquad h\ge 1
\end{equation*}
and therefore
\begin{equation}\label{decay-Mmn}
    M_h - m_h \le \CC(d)^h \left(M- m\right)\,,\qquad h\ge 1\,.
\end{equation}
Hence, by means of \eqref{m-M-n-monotone} and recalling that $\CC(d)<1$, we conclude that $M_h$ and $m_h\to 0$ as $h\to\infty$\,.

Therefore we obtain the bound
\begin{equation*}
    m_h\le f^\pm (x,t)\le M_h \qquad \mbox{for }a.e.\, x,\quad t\in [h,h+1)\,,
\end{equation*}
Recalling the relation \eqref{diag-var} between $\rho$, $J$ and $f^\pm$, we find that
\begin{align*}
    |J(x,t)| & = |f^+ (x,t)- f^-(x,t)| \le M_h-m_h\,,\\
    |\rho (x,t)| & = |f^+ (x,t)+ f^-(x,t)| \leq 2 \max\{M_h,|m_h|\}\leq 2( M_h-m_h)
\end{align*}
for $t\in [h,h+1)$\,.

Now we observe that one has, for $h\le t < h+1$:
\begin{equation*}
    \CC(d)^h < \CC(d)^{t-1} =  \frac 1 {\CC(d)} \ee^{-C_3 t}
\end{equation*}
where
\begin{equation*}
    C_3 = |\ln\left(\CC(d)\right)|\,.
\end{equation*}
Therefore, if we define
\begin{equation}\label{def-C1-C2}
    C_1 = \frac {M-m} {\CC(d)}\,,\qquad C_2 = 2 C_1
\end{equation}
and use \eqref{decay-Mmn}, we obtain  
\begin{align*}
\|J(\cdot,t)\|_{L^\infty}&\leq C_1 \ee^{ - C_3 t}\,,  \\
\|\rho(\cdot,t)\|_{L^\infty}&\leq C_2 \ee^{ - C_3 t}
\end{align*}
which is \eqref{decay-J-rho}\,. Hence the proof of part \textbf{(a)} is complete.

\medskip\par
\textbf{(b)} In this case, recalling \eqref{hyp-on-alpha_ON-OFF}, for $0<T_1<T_2$ one has 
\begin{equation*}
\alpha(t) =\begin{cases}
1 & t\in[0,T_1), \\
0 \, & t\in [T_1,T_2) 
\end{cases}
\end{equation*}
and $\alpha(t)$ is $T_2$-periodic\,. Therefore the damping term is "active" in every time interval of the form $[hT_2, hT_2+T_1)$ with $h\in\N$. 

Here we are assuming that $T_1\ge 1$. For $h\in \N$, define
\begin{equation*}
    M_h= \esssup_I{f^\pm(\cdot,hT_2)}\,,\qquad m_h = \essinf_I{f^\pm(\cdot,h T_2)}\qquad h\ge 1\,.
\end{equation*}
As in \emph{(a)}, by applying \eqref{eq:contraction-M1m1} iteratively, we obtain for $h\ge 1$
\begin{equation*}
    M_h- m_h \le \CC(d)^{[T_1]} \left(M_{h-1}- m_{h-1}\right)  
    \,.
\end{equation*}
Therefore
\begin{equation}\label{decay-Mmn-T1}
    M_h - m_h \le \CC(d)^{h[T_1]} \left(M- m\right)\,,\qquad h\ge 1\,.
\end{equation}
If $hT_2\le t <(h+1)T_2$, then
\begin{equation*}
    \CC(d)^{h[T_1]} = \CC(d)^{ (h+1)[T_1] - [T_1]} < \CC(d)^{- [T_1]} 
    \CC(d)^{\frac{[T_1]}{T_2} t} =  \frac 1 {\CC(d)^{[T_1]} } \ee^{-C_3 t}
\end{equation*}
with
\begin{equation*}
    C_3 = \frac{[T_1]}{T_2} |\ln\left(\CC(d)\right)|\,.
\end{equation*}
Proceeding as in  \emph{(a)} we obtain 
\begin{align*}
\|J(\cdot,t)\|_{L^\infty}&\leq C_1 \ee^{ - C_3 t}\,,  \\
\|\rho(\cdot,t)\|_{L^\infty}&\leq C_2 \ee^{ - C_3 t}
\end{align*}
with 
\begin{equation*}
    C_1 = \frac {M-m} {\CC(d)^{[T_1]}}\,,\qquad C_2 = 2 C_1\,.
\end{equation*}
The proof of part \textbf{(b)} is complete, and hence the proof of Theorem~\ref{main-theorem-3-applications}.

\appendix
\section{Proof of Theorem~\ref{theo:exp-formula}}\label{appendix:proof-of-exponential-formula}
\setcounter{equation}{0}
In this Appendix we prove Theorem~\ref{theo:exp-formula}. The expansion of the following power gives
\begin{equation}\label{eq:expansion-2}
\left[B(\zero)+ \gamma B_1 \right]^{n} =  \sum_{k=0}^{n} \gamma^k  S_k(B(\zero),B_1), 
\end{equation}
where each term $S_k(B(\zero),B_1)$ is the sum of all products of $n$ matrices which are either $B_1$ or $B(\zero)$,
and in which $B_1$ appears exactly $k$ times, that is
\begin{equation}\label{def:S_k}  
\left\{
\begin{aligned}
S_k(B(\zero),B_1)= & \sum_{(\ell_1,\ldots,\ell_{k+1})} 
B(\zero)^{\ell_1} \cdot B_1 \cdot B(\zero)^{\ell_2} \cdot B_1 \cdots B(\zero)^{\ell_k} \cdot B_1 \cdot B(\zero)^{\ell_{k+1}}\\
&0\le \ell_j\le n-k\,,\qquad   \sum_{j=1}^{k+1} \ell_j = n -k\,.
\end{aligned}\right.
\end{equation}
The terms $S_k$ can be handled, as in \cite{A-A-DS2018}, by means of the following identity:
\begin{equation}\label{commut-rule}
B(\zero)^{\pm\ell} B_1 = B_1  B(\zero)^{\mp\ell} \qquad \forall \, \ell\in\N\,.
\end{equation}
By means of \eqref{commut-rule} and using that $B_1^2=\Id$, the generic term $S_k$ 
in \eqref{def:S_k} can be conveniently rewritten: for $k=1,3,\dots, n-1$ odd we have 
\begin{equation}\label{eq:Skodd}
S_k(B(\zero),B_1) =  \sum_{j=\frac{k-1}2 }^{n-\frac{k+1}2}   
\begin{pmatrix} j \\ \frac{k-1}2     \end{pmatrix} \begin{pmatrix} n-j-1\\ \frac{k-1}2     
\end{pmatrix} B(\zero)^{2j-n}  B_2(\zero)
\end{equation}
and for $k=2,4,\dots, n$ even we have
\begin{equation}\label{eq:Skeven}
S_k(B(\zero),B_1)
=  \sum_{j=\frac k2 }^{n-\frac k2}  
\begin{pmatrix} j \\ \frac k2      \end{pmatrix} \begin{pmatrix} n-j-1\\ \frac k2 -1      \end{pmatrix}
B(\zero)^{2j-n}  \,. 
\end{equation}
In \eqref{eq:Skodd}, it is convenient to rewrite the term $B(\zero)^{2j-n}  B_2(\zero)$ as follows. Recalling that
$B(\zero)$ is given by $B(\zero)= B_2(\zero) B_1$, we obtain 
\begin{equation*}
B_2(\zero) = B_2(\zero)  B_1^2 = B(\zero) B_1 
\end{equation*}
and hence, by means of \eqref{commut-rule},
\begin{equation*}
B(\zero)^{2j-n}  B_2(\zero) = B(\zero)^{2j-n+1} B_1 = B_1 B(\zero)^{n-2j-1}\,.
\end{equation*}
Therefore, we can write \eqref{eq:expansion-2} for any $n$ as the following
\begin{align}
\left[B(\zero)+ \gamma B_1 \right]^{n} &= B(\zero)^{n} + ~\gamma  \sum_{j=0 }^{n-1}  B_1 B(\zero)^{n-2j-1} 
\label{eq:id-for-n-line1}\\
&\qquad   + \sum_{j=0}^{n-1}   \zeta_{j,n} B_1 B(\zero)^{n-2j-1} + \sum_{j=1}^{n-1}  \eta_{j,n} B(\zero)^{2j-n}\,, \nonumber
\end{align}
where $\gamma=\frac d N$ and 
\begin{align}
&\zeta_{j,n}  = \sum_{\ell=1}^{\min\{j,n-j-1\}}\gamma^{2\ell +1} \begin{pmatrix} j \\ \ell
\end{pmatrix} \begin{pmatrix}
n-j-1 \\ \ell
\end{pmatrix}\label{def:binomial-coeff-zeta}\,,
\\
&\eta_{j,n} ~=  \sum_{i=1}^{\min\{j,n-j\}}\gamma^{2i} 
\begin{pmatrix}
j \\ i
\end{pmatrix} \begin{pmatrix}
n-j -1 \\ i -1
\end{pmatrix}\label{def:binomial-coeff-eta}.
\end{align}
In the expansion above, the term with the $\zeta_{j,n}$ accounts for the odd powers, $\ge 3$, of $\gamma$ while the term 
with the $\eta_{j,n}$ accounts for the even powers $\ge 2$ of $\gamma$.

\smallskip
From now on, we assume that $n=N$. We recall the identity \cite[(100)]{A-A-DS2018},
\begin{equation}\label{eq:sum-full-cycle}
\frac{1}{N}\sum_{j=0}^{N-1} B(\zero)^{2j} 
= \frac {1}{2N} \left(e^t e  + v_-^t v_-  \right) = \widehat P\,, 
\end{equation}
and some immediate identities,
\begin{equation*}
\widehat P B_2(\zero) 
= \widehat P\,,\qquad  B(\zero)^{2}  \widehat P =  \widehat P B(\zero)^{2} =  \widehat P\,.
\end{equation*}
Therefore
\begin{equation*}
\sum_{j=0 }^{N-1} B_1 B(\zero)^{N-2j-1} =  B_1 \sum_{j=0 }^{N-1} B(\zero)^{N-2j-1} = N \widehat P\,,
\end{equation*}
and the identity \eqref{eq:id-for-n-line1} rewrites as
\begin{align*}
\left[B(\zero)+ \gamma B_1 \right]^{N} &= B(\zero)^{N} + d \widehat P +  R_{N}(d) \\
R_{N}(d) & 
=  \sum_{j=0}^{N-1}   \zeta_{j,N}  B_1 B(\zero)^{N-2j-1} + \sum_{j=1}^{N-1}  \eta_{j,N} B(\zero)^{2j-N}\,.
\end{align*}
To complete the proof, we need to estimate the sums of  $\zeta_{j,N}$, $\eta_{j,N}$. We claim that 
\begin{align}\label{stima-su-zeta_jN}
0\le \sum_{j=0}^{N} \zeta_{j,N} &\le \sinh(d)  - d+ {\frac1N} f_0(d)  ~~~   
\\
0\le \sum_{j=1}^{N} \eta_{j,N} &\le  \cosh(d)-1  ~+~  {\frac1N} f_1(d) ~~~  
\label{stima-su-eta_jN}
\end{align}
where
\begin{align*}
f_0(d) &~\dot =~ \sum_{\ell=1}^{\infty} \left(\frac{1}{2}\right)^{2\ell}\frac{d^{2\ell+1}}{(\ell!)^2}  = d \left[ I_0(d) -1 \right]\\
f_1(d) &~\dot =~\sum_{i=1}^{\infty} \left(\frac{1}{2}\right)^{2i-1}\frac{(d)^{2i}}{i! (i-1)! } = d I_1(d)\,,   
\end{align*}
and
$$
I_\alpha(2x) = \sum_{m=0}^\infty \frac{x^{2m+\alpha}}{m! (m+\alpha)!} \,,\qquad \alpha=0,1
$$
is a modified Bessel function of the first type. It is clear that, once that the claim above is proved, then it follows that 
\eqref{stima-su-zeta-eta_jN} holds with 
\begin{equation}\label{K-sum-modif-bessel-0}
K(d)=  f_0(d) +  f_1(d)\,.
\end{equation}

\smallskip
We start with $\zeta_{j,N}$ defined in \eqref{def:binomial-coeff-zeta}. Using the inequality 
$$
\begin{pmatrix} n \\ k
\end{pmatrix} \le \frac{n^k}{k!}\,,\qquad 0\le k\le n
$$
and the definition $\gamma=d/N$, we find that
\begin{align}\label{ineq:zeta_jN}
\zeta_{j,N}&\le 
\frac{1}{N}\sum_{\substack{\ell=1}}^{\infty}\frac{(d)^{2\ell+1}}{(\ell!)^2} \frac{ j^\ell}{N^\ell}  \frac{(N-j-1)^\ell}{N^\ell} \,.
\end{align}  
Then we introduce the change of variable
\begin{equation}\label{def:x-j}
x_j =  \frac {j} {N}\,,\qquad j=0,\ldots, N-1\,.
\end{equation}
Thanks to the inequality \eqref{ineq:zeta_jN} we get
\begin{align*}
0\le \zeta_{j,N}&\le \frac {1} {N}\sum_{{\ell=1}}^{\infty}\frac{(d)^{2\ell+1}}{(\ell!)^2 }x_j^\ell  \left(1-x_j- \frac {1} {N}\right)^\ell \\
&\le \frac {1}{N} \sum_{{\ell=1}}^{\infty}\frac{(d)^{2\ell+1}}{(\ell!)^2 } x_j^{\ell}{(1-x_j  )}^\ell \,.
\end{align*}  
As a consequence, we deduce an estimate for the sum of the $\zeta_{j,N}$:
\begin{align*} 
0\le \sum_{j=0}^{N-1} \zeta_{j,N} &\le {\frac {1} {N}} \sum_{j=0}^{N-1} \sum_{{\ell=1}}^{\infty}\frac{d^{2\ell+1}}{(\ell!)^2} x_j^{\ell}{(1-x_j  )}^\ell \\
&=  \sum_{\ell=1}^{\infty} \frac{d^{2\ell+1} }{(\ell!)^2 } \left\{\frac 	{1}{N} \sum_{j=0}^{N-1} x_j^{\ell}{(1-x_j  )}^\ell  \right\}
\end{align*}
Using the definition \eqref{def:x-j}, we observe that
\begin{equation*}
\frac{1}{N}\sum_{j=0}^{N-1} x_j^{\ell}{(1-x_j  )}^\ell  ~~\to~~ \int_{0}^1 x_j^{\ell}{(1-x_j  )}^\ell \,dx \quad \mbox{as}\  {N\to\infty}, \qquad \ell\ge 1\,;
\end{equation*}
more precisely the following estimate holds,
\begin{align}\nonumber
\frac{1}{N}\sum_{j=0}^{N-1}  x_j^{\ell}{(1-x_j  )}^\ell & = \frac{1}{N}\left(\sum_{j=0}^{(N/2)-1} + \sum_{j=(N/2)+1}^{N-1}\right) x_j^{\ell}{(1-x_j )}^\ell
~+~\frac{1}{N}\left(\frac{1}{2}\right)^{2\ell}\\
&\le \int_{0}^1  x_j^{\ell}{(1-x_j )}^\ell\,dx ~+~ \frac{1}{N}\left(\frac{1}{2}\right)^{2\ell} \,. \label{int-bessel}
\end{align}
It is easy to check the following identities
\begin{equation}\label{integrals}
\int_{0}^1  x_j^{\ell}{(1-x_j )}^\ell\,dx = \frac{(\ell!)^2} {(1+2\ell)!}\,, \qquad \ell\ge 1\,.  
\end{equation}
By plugging the previous estimates into the sum of the $\zeta_{j,n}$ we get
\begin{align*} 
0\le \sum_{j=0}^{N-1} \zeta_{j,n} 
& \leq \sum_{\ell=1}^{\infty}\frac{d^{2\ell+1}}{(\ell!)^2}  \frac{(\ell!)^2} {(1+2\ell)!} ~+~  
\frac {1} {N} \underbrace{\sum_{\ell=1}^{\infty} \left(\frac{1}{2}\right)^{2\ell}\frac{d^{2\ell+1}}{(\ell!)^2}}_{= f_0(d)}\\
&=\sum_{\ell=1}^{\infty} \frac{d^{2\ell+1}} {(1+2\ell)!} ~+~  \frac{1}{N} f_0(d) \\
&=\sinh(d)  -d ~+~  \frac{1}{N}  f_0(d)\,.
\end{align*}
Therefore \eqref{stima-su-zeta_jN} follows.

\smallskip
Similarly to the estimate \eqref{ineq:zeta_jN} for $\zeta_{j,N}$ and using the change of variables \eqref{def:x-j}, for $\eta_{j,N}$ defined in \eqref{def:binomial-coeff-eta} we find that
\begin{align*}
\eta_{j,N} &\le   \frac {1}{N}  \sum_{i=1}^{\infty}\frac{d^{2i}}{i! (i-1)! }  x_j^i \left(1-x_j - \frac {1}{N} \right)^{i-1} \\
&\le  \frac {1}{N}  \sum_{i=1}^{\infty}\frac{d^{2i}}{i! (i-1)! }  x_j^i \left(1-x_j \right)^{i-1}\,.
\end{align*}
The sum of the $\eta_{j,N} $  can be estimated as follows,
\begin{align*}
\sum_{j=1}^{N-1} \eta_{j,N} &\le   \sum_{i=1}^{\infty}\frac{d^{2i}}{i! (i-1)! }  \left\{ \frac {1}{N} \sum_{j=1}^{N-1}  x_j^i \left(1-x_i \right)^{i-1}  \right\} \,.
\end{align*}
while by \eqref{int-bessel} with $\ell=i-1$ and by \eqref{integrals} we find that
\begin{align*}
\frac {1}{N}  \sum_{j=1}^{N-1} x_j^i \left(1-x_j \right)^{i-1}&\le  \int_{0}^1  x_j^i \left(1-x_j \right)^{i-1}\,dx ~+~ \frac {1}{N} \left(\frac{1}{2}\right)^{2i-1} \\
& =  \frac{(i-1)!(i)!} {(2i)!}  ~+~ \frac {1}{N} \left(\frac{1}{2}\right)^{2i-1}\,.
\end{align*}
Therefore 
\begin{align*}
\sum_{j=1}^{N-1} \eta_{j,N} &\le  \sum_{i=1}^{\infty}\frac{d^{2i}}{i! (i-1)! } \frac{(i-1)!(i)!} {(2i)!}  ~+~\frac {1}{N}
\underbrace{\sum_{i=1}^{\infty} \left(\frac{h}{2}\right)^{2i-1}\frac{d^{2i}}{i! (i-1)! }}_{= f_1(d)}\\
&= \sum_{i=1}^{\infty}\frac{d^{2i}}{(2i)!}   ~+~ \frac 1 N {f_1(d)} \\
&= \cosh(d)-1  ~+~\frac 1 N {f_1(d)}\,,
\end{align*}
that leads to \eqref{stima-su-eta_jN}. This completes the proof of Theorem~\ref{theo:exp-formula}. 


\medskip
\medskip

\end{document}